\documentclass[preprint,12pt]{elsarticle}

\usepackage{amssymb}
\usepackage{amsthm}

\usepackage{amsmath}
\usepackage{bm}
\usepackage{booktabs}
\usepackage{mathrsfs}
\usepackage{xcolor}
\usepackage{changes}
\newtheorem{theorem}{Theorem}[section]
\newtheorem{lemma}{Lemma}[section]

\newtheorem{example}{Example}[section]
\newtheorem{remark}{Remark}[section]

\date{}
\journal{}

\begin{document}
	
	\begin{frontmatter}
		
		
		
		\title{Locking analysis and high-order locking-free finite element method for three-dimensional electroporoelasticity equations}
		
		
		\author[1]{Xuan Liu}
		\affiliation[1]{organization={School of Mathematics, Jilin University},
			city={Changchun},
			postcode={130012},
			country={China}}
		\ead{liuxuan2123@163.com}
		\author[1]{Yongkui Zou\corref{corresponding}}
		\cortext[corresponding]{Corresponding author.}
		\ead{zouyk@jlu.edu.cn}
		\author[2]{Yanzhao Cao}
		\affiliation[2]{organization={Department of Mathematics and Statistics, Auburn University},
			city={Auburn},
			postcode={AL 36849},
			country={USA}}
		\ead{yzc0009@auburn.edu}

		\begin{abstract}
			Electroporoelasticity equations couple Maxwell's equations with Biot's poroelasticity model and admit severe Poisson locking in standard conforming finite element discretizations when the Lam\'{e} constant is large. In this paper, we provide a rigorous analysis of the Poisson locking phenomenon for three-dimensional quasi-static electroporoelasticity equations and show that the spatial convergence order of conforming finite element approximations is reduced in the nearly incompressible regime. To eliminate this locking effect, we introduce a five-field formulation and a fully discrete high-order finite element method. We prove uniform stability with respect to large Lam\'{e} constant. Based on this, we establish a locking-free scheme by deriving its uniform error estimates with respect to the Lam\'{e} coefficient. The analysis covers the fully coupled electromagnetic–poroelastic system and applies to high-order elements in three dimensions. Extensive numerical experiments are presented to verify the theoretical convergence rates and demonstrate robustness with respect to the Lam\'{e} parameter.
		\end{abstract}

		\begin{keyword}
			Electroporoelasticity equations \sep well-posedness \sep finite element method \sep locking-free \sep error estimates
			
			
			\MSC[2020] 65M15 \sep 35Q60 \sep 76S05\sep 65M60
		\end{keyword}
		
	\end{frontmatter}
	
	
\section{Introduction}\label{s7.1}
Electroporoelasticity equations describe the coupled interaction of electromagnetic fields, elastic deformation of a solid matrix, and fluid flow in porous media. They arise from the coupling of Maxwell's equations with Biot's poroelasticity model and have been used to model seismoelectric and electroseismic phenomena in fluid-saturated materials 
\cite{KuMuRi2006,PrGa2005,PrMoGa2004,WhZh2006}. These equations form a strongly coupled multiphysics system that combines features of electromagnetics, elasticity, and poroelasticity, leading to substantial analytical and computational challenges, particularly in three dimensions.

In this paper, we aim to construct a class of locking-free finite element methods (FEMs) for the electroporoelasticity equations. FEMs have been widely employed for numerical solutions of elasticity and poroelasticity problems, including conforming methods, mixed formulations, nonconforming methods, and discontinuous Galerkin approaches \cite{ChSu1997,BoBrFo2013,Zh1997,ZhChPa2023}.  For electroporoelasticity equations, numerical methods have been studied in recent years, primarily focusing on well-posedness, stability, and basic finite element discretizations \cite{HuMe2022,HuMe2023,LiZoZhCaMe2025}. It is well known that standard conforming finite element discretizations for elasticity and poroelasticity may suffer from locking phenomena, i.e., numerical solutions lose their accuracy when some of the model parameters reach to certain limits. In the case of electroporoelasticity, the so called Poisson locking may happen when the Lam\'{e}  constant becomes large, corresponding to the nearly incompressible regime \cite{LiZoZhCaMe2025-2}. In such cases, numerical solutions lose accuracy and convergence rates deteriorate unless the mesh is excessively refined.  

Many effective locking-free methods have been developed to overcome the locking phenomena for various partial differential equations, such as the $p$-version FEMs \cite{Vo1983}, $hp$-version FEMs \cite{StSu1996}, mixed FEMs \cite{ArBrDo1984,ArDoGu1984}, nonconforming FEMs \cite{BrSu1992,ChJiCu2005}, discontinuous Galerkin methods \cite{Wi2004,Wi2006}, weak Galerkin FEMs \cite{ZhZhCaMe2020,ZhWaZo2023}, enriched Galerkin methods \cite{LeYi2023,YiLeZi2022} and some other multi-physics field methods \cite{OyRu2016,Yi2017}. For electroporoelasticity, we \cite{LiZoZhCaMe2025-2} derived a first-order locking-free finite element scheme based on N\'{e}d\'{e}lec, nonconforming Crouzeix-Raviart and Lagrange elements, and  proved that the numerical solution tends to a divergence-free state as  the Lam\'{e}  constant goes to infinity, inheriting the asymptotic behavior of the exact solution and ensuring the locking-free property of the numerical method.

The primary contribution of this paper is the development of a high-order, provably locking-free finite element method for three-dimensional quasi-static electromagnetic–poroelastic systems. To overcome Poisson locking, we introduce a novel five-field formulation by defining an auxiliary variable that couples the pressure and volumetric strain. This reformulation leads to a system whose solution remains uniformly bounded for arbitrarily large Lam\'{e} constants. Based on this framework, we design a fully discrete high-order scheme, employing the backward Euler method for time discretization and a compatible combination of N\'{e}d\'{e}lec, Lagrange, and Taylor–Hood finite elements in space. We rigorously establish uniform stability and derive Lamé-constant-independent high-order error estimates, thereby providing a definitive proof that the proposed method is locking-free.

In addition, we establish a rigorous mathematical characterization of Poisson locking in conforming finite element discretizations of the three-dimensional quasi-static electromagnetic–poroelasticity equations. We prove that as the Lam\'{e} constant increases, the numerical solution loses uniform boundedness, and the spatial convergence rate deteriorates. This analysis provides a precise and quantitative explanation of the Poisson locking mechanism in this coupled electromagnetic–poroelastic system.

The rest of this paper is organized as follows. In Section \ref{s7.2}, we introduce quasi-static electroporoelasticity equations. In Section \ref{s7.3}, we investigate the Poisson locking phenomenon in conforming finite element approximations. In Section \ref{s7.4}, we introduce five-field electroporoelasticity equations and prove the uniform boundedness of solutions with respect to the Lam\'{e} constant. Then, we construct a high-order finite element scheme and study its well-posedness and locking-free error estimates. In Section \ref{s7.5}, we present numerical experiments to validate the theoretical results.

\section{Preliminaries}\label{s7.2}
In this section, we introduce quasi-static electroporoelasticity equations and some assumptions.

Let $[0,T]$ with $T>0$ be an interval and $\mathscr{D}\subset \mathbb{R}^{3}$ be an open bounded convex polyhedron with $\mathbf{n}$ the unit outward normal vector to its boundary $\partial\mathscr{D}$. The space $L^{2}(\mathscr{D})$ consists of square-integrable functions endowed with inner product $(\cdot,\cdot)$ and norm $\|\cdot\|$. Throughout this paper, the spaces of three-dimensional vector fields will be denoted in bold italic letters, e.g., $\bm{L}^{2}(\mathscr{D})=(L^{2}(\mathscr{D}))^{3}$ and its norm is still denoted by $\|\cdot\|$. For any integer $\ell\geq 1$, denote by $H^{\ell}:=H^{\ell}(\mathscr{D})$ the standard Sobolev spaces with norm $\|\cdot\|_{\ell}$ and $H^{\ell}_{0}=\{v\in H^{\ell}: D^{\iota}v|_{\partial\mathscr{D}}=0, \, |\iota|<\ell\}$ \cite{CaHoLi2020}. Let $C(\bar{\mathscr{D}})$ be a space consisting of continuous functions on $\bar{\mathscr{D}}$.
Define three subspaces of $\bm{L}^{2}(\mathscr{D})$
\begin{align*}
	\bm{H}(div) &= \{ \mathbf{v}: \mathbf{v}\in \bm{L}^{2}(\mathscr{D}), \ \nabla\cdot \mathbf{v}\in L^{2}(\mathscr{D}) \},
	\\
	\bm{H}(\mathbf{curl}) &= \{ \mathbf{v}: \mathbf{v}\in \bm{L}^{2}(\mathscr{D}), \ \nabla\times \mathbf{v}\in \bm{L}^{2}(\mathscr{D}) \},
	\\
	\bm{H}_{0}(\mathbf{curl}) &= \{ \mathbf{v}: \mathbf{v}\in \bm{H}(\mathbf{curl}), \ \mathbf{v}\times \mathbf{n}=\mathbf{0} \ \text{on} \ \partial\mathscr{D} \}.
\end{align*}

Consider three-dimensional quasi-static electroporoelasticity equations for $(t,\mathbf{x})\in (0,T]\times \mathscr{D}$,
\begin{equation}\label{e7.2.1}
	\begin{aligned}
		& \epsilon\frac{\partial}{\partial t}\mathbf{E} + \sigma \mathbf{E} - \nabla\times \mathbf{H} - L\nabla p = \mathbf{j},
		\\
		& \mu\frac{\partial}{\partial t}\mathbf{H} + \nabla\times \mathbf{E} = \mathbf{0},
		\\
		& -(\lambda+G)\nabla(\nabla\cdot \mathbf{u}) - G\Delta \mathbf{u} + \alpha\nabla p = \mathbf{f},
		\\
		& \frac{\partial}{\partial t}(c_{0}p + \alpha\nabla\cdot \mathbf{u}) - \kappa\Delta p + L\nabla\cdot \mathbf{E} = g.
	\end{aligned}
\end{equation}
The initial value conditions are, for $\mathbf{x}\in\mathscr{D}$,
\begin{equation}\label{e7.2.2}
	\mathbf{E}(0,\mathbf{x}) = \mathbf{E}_{0}(\mathbf{x}), \  \mathbf{H}(0,\mathbf{x}) = \mathbf{H}_{0}(\mathbf{x}),
	\  \mathbf{u}(0,\mathbf{x}) = \mathbf{u}_{0}(\mathbf{x}), \  p(0,\mathbf{x}) = p_{0}(\mathbf{x}),
\end{equation}
with $\mathbf{u}_{0}$ and $p_{0}$ fulfilling the third equation of \eqref{e7.2.1}. 
The boundary value conditions are, for $(t,\mathbf{x})\in (0,T]\times\partial \mathscr{D}$,
\begin{equation}\label{e7.2.3}
	\mathbf{E}(t,\mathbf{x})\times \mathbf{n} = \mathbf{0},
	\quad
	\mathbf{u}(t,\mathbf{x})=\mathbf{0}, 
	\quad 
	p(t,\mathbf{x}) = 0.
\end{equation}
Here, $\mathbf{E}$ is the electric field, $\mathbf{H}$ is the magnetic field, $\mathbf{u}$ is the displacement of solid matrix and $p$ is the pressure in the fluid. The physical parameters $\epsilon$, $\sigma$, $L$, $\mu$, $G$, $\alpha$, $c_{0}$ and $\kappa$ are assumed to be fixed positive constants and $\lambda\in [1,\infty)$ is the Lam\'{e} constant, cf.\ \cite{HuMe2022,HuMe2023,LiZoMe2026}. In this paper, we assume that $\mathbf{f}(t,\mathbf{x})\equiv\mathbf{0}$, otherwise, we can first search a solution $\tilde{\mathbf{u}}(t)$ of a stationary elasticity problem for any $t\in(0,T]$,
\begin{equation*}
	-(\lambda+G)\nabla(\nabla\cdot \tilde{\mathbf{u}})
	- G\Delta \tilde{\mathbf{u}}
	= \mathbf{f}.
\end{equation*}
Then, replace $\mathbf{u}$ by $\mathbf{u}+\tilde{\mathbf{u}}$ in \eqref{e7.2.1} to set up the homogeneous equation. For convenience, we also denote $\mathbf{E}(t,\mathbf{x})$ by either $\mathbf{E}(t)$ or $\mathbf{E}$, etc.

Define a family of symmetric bilinear forms
\begin{equation*}
	a_{\lambda}(\mathbf{w},\mathbf{v}) = ((\lambda+G)\nabla\cdot \mathbf{w}, \nabla\cdot \mathbf{v}) + (G\nabla \mathbf{w}, \nabla \mathbf{v}), \quad \forall \, \mathbf{w},\mathbf{v}\in \bm{H}^{1}_{0}.
\end{equation*}
Then, it is easy to verify that 
\begin{equation}\label{e7.2.4}
	\begin{aligned}
		a_{\lambda}(\mathbf{v},\mathbf{v}) \geq&\ C\|\mathbf{v}\|_{1}^{2},
		\\
		|a_{\lambda}(\mathbf{w},\mathbf{v})| 
		\leq&\ \lambda\|\mathbf{w}\|_{1}\|\mathbf{v}\|_{1} +  C\|\mathbf{w}\|_{1}\|\mathbf{v}\|_{1},
	\end{aligned}
\end{equation}
where $C>0$ is a constant independent of $\lambda$.

The variational equation of \eqref{e7.2.1}--\eqref{e7.2.3} is to seek $(\mathbf{E}(t),\mathbf{H}(t),\mathbf{u}(t),p(t))\in \bm{H}_{0}(\mathbf{curl}) \times \bm{L}^{2}(\mathscr{D}) \times \bm{H}^{1}_{0} \times H^{1}_{0}$ satisfying \eqref{e7.2.2} such that
\begin{equation}\label{e7.2.5}
	\begin{aligned}
		& \big(\epsilon\frac{\partial}{\partial t}\mathbf{E}(t), \mathbf{D}\big) 
		+ \big(\sigma \mathbf{E}(t), \mathbf{D}\big) 
		- \big(\mathbf{H}(t), \nabla\times \mathbf{D}\big) 
		- \big(L\nabla p(t), \mathbf{D}\big) 
		= \big(\mathbf{j}(t), \mathbf{D}\big),
		\\
		& \big(\mu\frac{\partial}{\partial t}\mathbf{H}(t), \mathbf{B}\big) 
		+ \big(\nabla\times \mathbf{E}(t), \mathbf{B}\big) = 0,
		\\
		& a_{\lambda}\big(\mathbf{u}(t), \mathbf{v}\big)
		- \big(p(t), \alpha \nabla\cdot \mathbf{v}\big) = 0,
		\\
		& \big(\frac{\partial}{\partial t}(c_{0}p(t)+\alpha\nabla\cdot \mathbf{u}(t)), q\big) 
		+ \big(\kappa\nabla p(t), \nabla q\big) 
		- \big(L\mathbf{E}(t), \nabla q\big) 
		= \big(g(t), q\big),
	\end{aligned}
\end{equation}
for every $t\in (0,T]$ and all $(\mathbf{D},\mathbf{B},\mathbf{v},q)\in \bm{H}_{0}(\mathbf{curl}) \times \bm{L}^{2}(\mathscr{D}) \times \bm{H}^{1}_{0} \times H^{1}_{0}$.

We assume that
\begin{itemize}
	\item [\bf{(H1)}] $0< L< \sqrt{\sigma\kappa}$.
	\item [\bf{(H2)}] The functions $\mathbf{E}_{0}$, $\mathbf{H}_{0}$, $\mathbf{u}_{0}$, $p_{0}$, $\mathbf{j}$ and $g$ are sufficiently smooth.
\end{itemize}

By \cite{LiZoZhCaMe2025}, \eqref{e7.2.5} has a unique solution $(\mathbf{E}(t),\mathbf{H}(t),\mathbf{u}(t),p(t))\in \bm{H}_{0}(\mathbf{curl}) \times \bm{L}^{2}(\mathscr{D}) \times \bm{H}^{1}_{0} \times H^{1}_{0}$ for every $\lambda\in [1,\infty)$. By virtue of \cite[Lemma 3.1]{LiZoZhCaMe2025-2}, there exists a constant $C>0$ independent of $\lambda$ such that for all $t\in (0,T]$,
\begin{equation*}
	\epsilon\|\mathbf{E}(t)\|^{2} + \mu\|\mathbf{H}(t)\|^{2} + c_{0}\|p(t)\|^{2} + (\lambda+G)\|\nabla\cdot \mathbf{u}(t)\|^{2} + G\|\nabla \mathbf{u}(t)\|^{2} \leq C.
\end{equation*}
From the Poincar\'{e} inequality, it follows that $\|\mathbf{u}(t)\|_{1} \leq C\|\nabla \mathbf{u}(t)\| \leq C$. Hence, the solution to \eqref{e7.2.5} is uniformly bounded with respect to $\lambda$.

We further assume that the exact solution to \eqref{e7.2.5} has additional regularity which is used to study convergence order for numerical approximations.

\begin{itemize}
	\item [\bf{(H3)}] For any integer $\beta\geq 1$, there exists a constant $C>0$ independent of $\lambda$ such that for all $t\in (0,T]$,
	\begin{align*}
		& \|\mathbf{E}(t)\|_{\beta+1}^{2} + \int_{0}^{t}{\|\frac{\partial}{\partial t} \mathbf{E}(\theta)\|_{\beta+1}^{2}}{\, \mathrm{d}\theta} 
		+ \int_{0}^{t}{\|\frac{\partial^{2}}{\partial t^{2}} \mathbf{E}(\theta)\|^{2}}{\, \mathrm{d}\theta} \leq C,
		\\
		& \|\mathbf{H}(t)\|_{\beta}^{2} + \int_{0}^{t}{\|\frac{\partial}{\partial t} \mathbf{H}(\theta)\|_{\beta}^{2}}{\, \mathrm{d}\theta} 
		+ \int_{0}^{t}{\|\frac{\partial^{2}}{\partial t^{2}} \mathbf{H}(\theta)\|^{2}}{\, \mathrm{d}\theta} \leq C,
		\\
		& \|\mathbf{u}(t)\|_{\beta+1}^{2} + \int_{0}^{t}{\|\frac{\partial}{\partial t} \mathbf{u}(\theta)\|_{\beta+1}^{2}}{\, \mathrm{d}\theta} 
		+ \int_{0}^{t}{\|\frac{\partial^{2}}{\partial t^{2}} \mathbf{u}(\theta)\|_{1}^{2}}{\, \mathrm{d}\theta} \leq C,
		\\
		& \|p(t)\|_{\beta+1}^{2} + \int_{0}^{t}{\|\frac{\partial}{\partial t} p(\theta)\|_{\beta+1}^{2}}{\, \mathrm{d}\theta} 
		+ \int_{0}^{t}{\|\frac{\partial^{2}}{\partial t^{2}} p(\theta)\|^{2}}{\, \mathrm{d}\theta} \leq C.
	\end{align*}
\end{itemize}

\section{Locking effect in conforming finite element approximation}\label{s7.3}
In this section, we first introduce a conforming FEM and then construct a fully discretized scheme to \eqref{e7.2.5} together with the backward Euler method. Then, we prove its well-posedness and derive $\lambda$-dependent error estimates. This explains why the Poisson locking phenomenon occurs in conforming finite element approximations to electroporoelasticity equations.

\subsection{Conforming FEM and full discretization}\label{s7.3.1}
Let $\mathcal{T}_{h}$ be a shape regular \cite[A1--A4]{WaYe2014} tetrahedral partition of $\mathscr{D}$. For each element $K\in \mathcal{T}_{h}$ with faces $\mathcal{F}_{K}$ and edges $\mathcal{E}_{K}$, let $h_{K}$ be its diameter and $h=\max_{K\in \mathcal{T}_{h}}h_{K}$ be the mesh size of $\mathcal{T}_{h}$. For any integer $l\geq 0$, denote by $P_{l}(K)$ and $\tilde{P}_{l}(K)$ the spaces of polynomials with degree no more than $l$ and homogeneous polynomials with degree of $l$, respectively.
For any $l\geq 1$, define two spaces consisting of vector-valued polynomials
\begin{align*}
	\bm{S}_{l}(K) =&\ \{\mathbf{L}\in \tilde{\bm{P}}_{l}(K): \mathbf{L}(\mathbf{x})\cdot\mathbf{x}=0, \ \forall\, \mathbf{x}\in K\},
	\\
	\bm{R}_{l}(K) =&\ \bm{P}_{l-1}(K)\oplus \bm{S}_{l}(K).
\end{align*}
Construct four finite element spaces
\begin{align*}
	\mathbb{E}_{h} =&\ \{\mathbf{E}_{h}\in \bm{H}_{0}(\mathbf{curl}): \mathbf{E}_{h}|_{K}\in \bm{R}_{l}(K), \ \forall\, K\in \mathcal{T}_{h}\},
	\\
	\mathbb{H}_{h} =&\ \{\mathbf{H}_{h}\in \bm{L}^{2}(\mathscr{D}): \mathbf{H}_{h}|_{K}\in \bm{P}_{l-1}(K), \ \forall\, K\in \mathcal{T}_{h}\},
	\\
	\mathbb{U}_{h} =&\ \{\mathbf{u}_{h}\in \bm{C}(\bar{\mathscr{D}}): \mathbf{u}_{h}|_{\partial\mathscr{D}}=0,\, \mathbf{u}_{h}|_{K}\in \bm{P}_{l}(K),\ \forall\, K\in \mathcal{T}_{h}\},
	\\
	\mathbb{P}_{h} =&\ \{p_{h}\in C(\bar{\mathscr{D}}): p_{h}|_{\partial\mathscr{D}}=0,\, p_{h}|_{K}\in P_{l}(K), \ \forall\, K\in \mathcal{T}_{h}\}.
\end{align*}
Notice that $\mathbb{E}_{h}$ is the space consisting of N\'{e}d\'{e}lec elements \cite{Ne1980}, and $\mathbb{H}_{h}$, $\mathbb{U}_{h}$ and $\mathbb{P}_{h}$ are the spaces comprising of Lagrange elements. 

Denote by $\mathfrak{P}_{h}: \bm{L}^{2}(\mathscr{D})\to \mathbb{E}_{h}$, $\mathcal{P}_{h}: \bm{L}^{2}(\mathscr{D})\to \mathbb{H}_{h}$, $\mathscr{P}_{h}: \bm{L}^{2}(\mathscr{D})\to \mathbb{U}_{h}$ and $P_{h}: L^{2}(\mathscr{D})\to \mathbb{P}_{h}$ the $L^{2}$-orthogonal projection operators, respectively. 
In the next lemma, we collect some properties of these operators.

\begin{lemma}\label{l7.3.1}
	(\cite[Lemma 11.18]{ErGu2021}) For any integer $\beta\geq 1$, there exists a constant $C>0$ independent of $h$ such that for any $\mathbf{E}\in \bm{H}^{\beta+1}$, $\mathbf{H}\in \bm{H}^{\beta}$, $\mathbf{u}\in \bm{H}^{\beta+1}$ and $p\in H^{\beta+1}$,
	\begin{align*}
		\|\mathfrak{P}_{h}\mathbf{E}-\mathbf{E}\| \leq&\ Ch^{\min\{l,\beta\}+1}\|\mathbf{E}\|_{\beta+1},
		\\
		\|\mathcal{P}_{h}\mathbf{H}-\mathbf{H}\| \leq&\ Ch^{\min\{l,\beta\}}\|\mathbf{H}\|_{\beta},
		\\
		\|\mathscr{P}_{h}\mathbf{u}-\mathbf{u}\| + h \|\mathscr{P}_{h}\mathbf{u}-\mathbf{u}\|_{1} 
		\leq&\ Ch^{\min\{l,\beta\}+1}\|\mathbf{u}\|_{\beta+1},
		\\
		\|P_{h}p-p\| + h \|P_{h}p-p\|_{1} \leq&\ Ch^{\min\{l,\beta\}+1}\|p\|_{\beta+1}.
	\end{align*}
\end{lemma}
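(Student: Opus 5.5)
The plan is the classical two-step argument for $L^{2}$-orthogonal projections: best approximation in $L^{2}$ combined with a suitable interpolant, followed by an inverse estimate to pass to the $H^{1}$ bound. Since each operator $\mathfrak{P}_{h}$, $\mathcal{P}_{h}$, $\mathscr{P}_{h}$, $P_{h}$ is the $L^{2}$-orthogonal projection onto its finite element space, we have
\[
\|\mathfrak{P}_{h}\mathbf{E}-\mathbf{E}\| \le \|\mathbf{w}_{h}-\mathbf{E}\| \quad \text{for all } \mathbf{w}_{h}\in\mathbb{E}_{h},
\]
and similarly for the other three spaces. So the $L^{2}$ estimates reduce to choosing a good element of each space. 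For $\mathbf{E}\in\bm{H}^{\beta+1}$ we take the canonical N\'{e}d\'{e}lec interpolant, or a quasi-interpolant to avoid needing extra smoothness. For $\mathbf{u}$ and $p$ we take the Scott--Zhang quasi-interpolant, which preserves the homogeneous boundary conditions. For $\mathbf{H}$ the space is fully discontinuous, so the elementwise $L^{2}$ projection onto $\bm{P}_{l-1}(K)$ is already $\mathcal{P}_{h}$. The standard Bramble--Hilbert/scaling argument on a shape-regular mesh then gives the rates. Since $\bm{P}_{l}\subset\bm{R}_{l}$, the N\'{e}d\'{e}lec interpolant gives $h^{\min\{l,\beta\}+1}$ with the full $P_{l}$ content, and the remaining cases give $h^{\min\{l,\beta\}}$ and $h^{\min\{l,\beta\}+1}$. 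Whenever $\beta>l$, the rate simply saturates at polynomial degree.

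For the $H^{1}$ part of the $\mathbf{u}$ and $p$ estimates, we split using the quasi-interpolant $I_{h}p$:
\[
\|P_{h}p-p\|_{1}\le \|P_{h}p-I_{h}p\|_{1}+\|I_{h}p-p\|_{1}.
\]
The second term is $O(h^{\min\{l,\beta\}})\|p\|_{\beta+1}$ by interpolation theory. For the first term, we would like the inverse inequality $\|\chi_{h}\|_{1}\le Ch^{-1}\|\chi_{h}\|$ together with the $L^{2}$ bounds for $P_{h}p-p$ and $I_{h}p-p$. That gives $h^{-1}\cdot h^{\min\{l,\beta\}+1}$, which is the claimed $h\,\|P_{h}p-p\|_{1}$ bound.

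The main obstacle is that this inverse inequality is global, so it requires a quasi-uniform mesh, whereas the paper assumes only shape regularity. I would first try to show instead that the $L^{2}$ projection onto Lagrange spaces is $H^{1}$-stable, $\|P_{h}v\|_{1}\le C\|v\|_{1}$. This holds on shape-regular meshes with mild grading conditions (Bramble--Pasciak--Steinbach, Carstensen). It then yields
\[
\|P_{h}p-p\|_{1}\le C\inf_{q_{h}}\|p-q_{h}\|_{1}
\]
by applying stability to $p-I_{h}p$. If this cannot be justified in full generality, the fallback is to add a quasi-uniformity assumption, implicitly as the cited reference \cite[Lemma 11.18]{ErGu2021} does. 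The same argument applies verbatim to $\mathscr{P}_{h}$ componentwise.
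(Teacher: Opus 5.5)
The paper gives no argument of its own for this lemma, only the citation to Ern--Guermond. Your outline (best approximation plus an interpolant) is the natural thing to write, but it breaks at the N\'{e}d\'{e}lec estimate.

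The inclusion $\bm{P}_{l}(K)\subset\bm{R}_{l}(K)$ that you invoke is false. The space $\bm{R}_{l}(K)=\bm{P}_{l-1}(K)\oplus\bm{S}_{l}(K)$ contains $\bm{P}_{l-1}(K)$ in full, but its homogeneous degree-$l$ part is restricted to fields with $\mathbf{L}(\mathbf{x})\cdot\mathbf{x}=0$. For instance $x_{1}^{l}\mathbf{e}_{1}\notin\bm{R}_{l}(K)$, since $x_{1}^{l}\mathbf{e}_{1}\cdot\mathbf{x}=x_{1}^{l+1}\neq0$. These are first-kind N\'{e}d\'{e}lec elements, whose $L^{2}$ approximation order is $h^{l}$, not $h^{l+1}$. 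This is consistent with Lemma~\ref{l7.3.2}(i), which claims only $h^{\min\{l,\beta\}}$ for the interpolant.

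No better comparison function rescues the step. $\mathbb{E}_{h}$ lies in the broken space $\prod_{K}\bm{R}_{l}(K)$. A scaling argument (covariant Piola map to the reference element, where the class of the mapped field modulo $\bm{R}_{l}$ is nonzero) shows the following: if $\mathbf{E}$ is smooth, compactly supported and equal to $x_{1}^{l}\mathbf{e}_{1}$ on a ball, then $\inf_{\mathbf{w}_{h}\in\mathbb{E}_{h}}\|\mathbf{E}-\mathbf{w}_{h}\|\geq ch^{l}$ on quasi-uniform meshes. Hence for $\beta\geq l$ the first inequality of the lemma is false as stated.

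What your argument does prove is $\|\mathfrak{P}_{h}\mathbf{E}-\mathbf{E}\|\leq Ch^{\min\{l,\beta+1\}}\|\mathbf{E}\|_{\beta+1}$. That is, $h^{\beta+1}$ when $\beta\leq l-1$ (then $\bm{P}_{\beta}\subset\bm{P}_{l-1}\subset\bm{R}_{l}$) and $h^{l}$ otherwise. This weaker bound is all the paper uses: it enters only through $\|\Pi_{h}^{0}\|$ in \eqref{e7.3.20}, at rate $h^{\min\{l,\beta\}}$.

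There are two further points.

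First, the lemma is stated for arbitrary $\mathbf{E},\mathbf{u}\in\bm{H}^{\beta+1}$ and $p\in H^{\beta+1}$, yet $\mathbb{E}_{h}$, $\mathbb{U}_{h}$, $\mathbb{P}_{h}$ carry homogeneous boundary conditions. Your N\'{e}d\'{e}lec and Scott--Zhang interpolants belong to these spaces only when $\mathbf{E}\times\mathbf{n}=\mathbf{0}$, $\mathbf{u}=\mathbf{0}$ and $p=0$ on $\partial\mathscr{D}$, and you use this without saying so. These hypotheses are necessary, not cosmetic. By the trace theorem, $\|P_{h}p-p\|_{1}\geq C^{-1}\|p\|_{L^{2}(\partial\mathscr{D})}$, which does not tend to zero when $p|_{\partial\mathscr{D}}\neq0$. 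For $p\equiv1$, moreover, $\|P_{h}p-p\|$ is no better than $O(h^{1/2})$. State these hypotheses explicitly; they hold wherever the lemma is applied.

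Second, your treatment of the $H^{1}$ bounds is otherwise sound. The global inverse inequality needs quasi-uniformity, and $H^{1}$-stability of the $L^{2}$ projection holds only under mesh-grading restrictions. The paper itself tacitly relies on that stability in \eqref{e7.3.11}. So adding quasi-uniformity (or such a grading condition) is the right fix. Your $\mathcal{P}_{h}$ estimate is fine as it stands, since that projection acts elementwise.
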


Let $\mathcal{I}_{h}: \bm{H}^{2}\to \mathbb{E}_{h}$ be an interpolation operator such that $(\mathcal{I}_{h}\mathbf{E})|_{K}$ is the unique polynomial in $\bm{R}_{l}(K)$ having the same moments as $\mathbf{E}|_{K}$, cf.\ \cite{Ne1980,GiRa1986,Mo1991}. 
Let $\mathcal{Q}_{h}:\bm{H}^{1}_{0}\to \mathbb{U}_{h}$ and $\mathcal{R}_{h}:H^{1}_{0}\to \mathbb{P}_{h}$ be a pair of Ritz projection operators satisfying for any $\mathbf{u}\in \bm{H}^{1}_{0}$ and $p\in H^{1}_{0}$,
\begin{align}
	\label{e7.3.1}
	a_{\lambda}(\mathcal{Q}_{h}\mathbf{u},\mathbf{v}_{h}) - (\mathcal{R}_{h}p,\alpha\nabla\cdot \mathbf{v}_{h}) 
	=&\ a_{\lambda}(\mathbf{u},\mathbf{v}_{h}) - (p,\alpha\nabla\cdot \mathbf{v}_{h}), \quad \forall \, \mathbf{v}_{h}\in \mathbb{U}_{h}, 
	\\
	\label{e7.3.2}
	(\kappa\nabla\mathcal{R}_{h}p,\nabla q_{h}) =&\ (\kappa \nabla p,\nabla q_{h}), \quad \forall \, q_{h}\in \mathbb{P}_{h}.
\end{align}

\begin{lemma}\label{l7.3.2}
	For any integer $\beta\geq 1$, there exists a constant $C>0$ independent of $h$ and $\lambda$ such that for any $\mathbf{E}\in \bm{H}^{\beta+1}$, $\mathbf{u}\in \bm{H}^{\beta+1}\cap \bm{H}^{1}_{0}$ and $p\in H^{\beta+1}\cap H^{1}_{0}$,
	\begin{align*}
		(i)\ & \|\mathcal{I}_{h}\mathbf{E}-\mathbf{E}\| + \|\nabla\times (\mathcal{I}_{h}\mathbf{E}-\mathbf{E})\| 
		\leq Ch^{\min\{l,\beta\}}\|\mathbf{E}\|_{\beta+1},
		\\
		(ii)\ & \|\mathcal{R}_{h}p-p\| + h \|\mathcal{R}_{h}p-p\|_{1} \leq Ch^{\min\{l,\beta\}+1}\|p\|_{\beta+1},
		\\
		(iii)\ & \|\mathcal{Q}_{h}\mathbf{u}-\mathbf{u}\|_{1} 
		\leq C\lambda h^{\min\{l,\beta\}}\|\mathbf{u}\|_{\beta+1} 
		+ C h^{\min\{l,\beta\}}\|\mathbf{u}\|_{\beta+1} 
		\\
		& \qquad\qquad\qquad + Ch^{\min\{l,\beta\}+1}\|p\|_{\beta+1},
		\\
		(iv)\ & (\lambda+G)\|\nabla\cdot(\mathcal{Q}_{h}\mathbf{u}-\mathscr{P}_{h}\mathbf{u})\|^{2} 
		\leq C\lambda^{2} h^{2\min\{l,\beta\}}\|\mathbf{u}\|_{\beta+1}^{2} 
		\\
		& \qquad\qquad\qquad + C h^{2\min\{l,\beta\}}\|\mathbf{u}\|_{\beta+1}^{2} 
		+ Ch^{2(\min\{l,\beta\}+1)}\|p\|_{\beta+1}^{2}.
	\end{align*}
\end{lemma}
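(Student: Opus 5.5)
Items (i) and (ii) are standard and independent of $\lambda$, so I will cite them. For (i), the N\'{e}d\'{e}lec interpolant $\mathcal{I}_{h}$ on $\bm{R}_{l}(K)$ satisfies $\|\mathcal{I}_{h}\mathbf{E}-\mathbf{E}\| + \|\nabla\times(\mathcal{I}_{h}\mathbf{E}-\mathbf{E})\| \le Ch^{\min\{l,\beta\}}\|\mathbf{E}\|_{\beta+1}$; this follows from the classical estimates in \cite{Ne1980,Mo1991}, or from an element-wise Bramble--Hilbert argument on the reference element together with the covariant Piola transform. For (ii), \eqref{e7.3.2} is the standard Ritz projection for the Laplacian with homogeneous Dirichlet data. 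C\'{e}a's lemma and Lemma \ref{l7.3.1} give the $H^{1}$ bound $Ch^{\min\{l,\beta\}}\|p\|_{\beta+1}$. Since $\mathscr{D}$ is convex, full $H^{2}$ elliptic regularity holds, and an Aubin--Nitsche duality argument then gives the $L^{2}$ bound with one extra power of $h$.

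For (iii), set $\mathbf{e}_{h}=\mathcal{Q}_{h}\mathbf{u}-\mathscr{P}_{h}\mathbf{u}\in\mathbb{U}_{h}$ and split $\mathcal{Q}_{h}\mathbf{u}-\mathbf{u}=\mathbf{e}_{h}+(\mathscr{P}_{h}\mathbf{u}-\mathbf{u})$. The second term is already controlled by Lemma \ref{l7.3.1}. Subtracting $a_{\lambda}(\mathscr{P}_{h}\mathbf{u},\mathbf{v}_{h})$ from both sides of \eqref{e7.3.1} gives
\begin{equation*}
a_{\lambda}(\mathbf{e}_{h},\mathbf{v}_{h}) = a_{\lambda}(\mathbf{u}-\mathscr{P}_{h}\mathbf{u},\mathbf{v}_{h}) + (\mathcal{R}_{h}p-p,\alpha\nabla\cdot\mathbf{v}_{h}).
\end{equation*}
Test with $\mathbf{v}_{h}=\mathbf{e}_{h}$. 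The left side is bounded below by $C\|\mathbf{e}_{h}\|_{1}^{2}$, using the coercivity in \eqref{e7.2.4} with a constant independent of $\lambda$. The first term on the right is bounded by $(\lambda+C)\|\mathbf{u}-\mathscr{P}_{h}\mathbf{u}\|_{1}\|\mathbf{e}_{h}\|_{1}$, using the continuity in \eqref{e7.2.4}. The second term is bounded by $C\|\mathcal{R}_{h}p-p\|\,\|\mathbf{e}_{h}\|_{1}$. Dividing by $\|\mathbf{e}_{h}\|_{1}$ and applying Lemma \ref{l7.3.1} and (ii) yields
\begin{equation*}
\|\mathbf{e}_{h}\|_{1}\le C(\lambda+1)h^{\min\{l,\beta\}}\|\mathbf{u}\|_{\beta+1}+Ch^{\min\{l,\beta\}+1}\|p\|_{\beta+1}.
\end{equation*}
The triangle inequality then gives (iii). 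Note that the $L^{2}$ bound for $\mathcal{R}_{h}p-p$ is what produces the extra power of $h$ on the $p$ term.

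For (iv) I would reuse the same identity tested with $\mathbf{e}_{h}$, but keep the divergence term separately rather than coarsening to $\|\cdot\|_{1}$. On the left, $a_{\lambda}(\mathbf{e}_{h},\mathbf{e}_{h})\ge(\lambda+G)\|\nabla\cdot\mathbf{e}_{h}\|^{2}+G\|\nabla\mathbf{e}_{h}\|^{2}$. On the right, I split $a_{\lambda}(\mathbf{u}-\mathscr{P}_{h}\mathbf{u},\mathbf{e}_{h})$ into its divergence part and its gradient part, and apply Young's inequality to each:
\begin{equation*}
(\lambda+G)(\nabla\cdot(\mathbf{u}-\mathscr{P}_{h}\mathbf{u}),\nabla\cdot\mathbf{e}_{h})\le \tfrac{\lambda+G}{4}\|\nabla\cdot\mathbf{e}_{h}\|^{2}+(\lambda+G)\|\mathbf{u}-\mathscr{P}_{h}\mathbf{u}\|_{1}^{2},
\end{equation*}
and similarly for the gradient part and for the $p$ term, which I absorb into $\frac{G}{4}\|\nabla\mathbf{e}_{h}\|^{2}$ via Poincar\'{e}. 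This leaves $(\lambda+G)\|\nabla\cdot\mathbf{e}_{h}\|^{2}\le C(\lambda+1)h^{2\min\{l,\beta\}}\|\mathbf{u}\|_{\beta+1}^{2}+Ch^{2(\min\{l,\beta\}+1)}\|p\|_{\beta+1}^{2}$. Since $\lambda\ge1$, we have $\lambda+1\le2\lambda\le2\lambda^{2}$, which gives the stated bound; the stated form is in fact weaker than what this argument produces.

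The only delicate point is bookkeeping. The powers of $\lambda$ must come only from the continuity of $a_{\lambda}$ and never from inverting the coercivity constant, and the coercivity constant must be uniform in $\lambda$. This holds because $\lambda+G>0$ and $\|\nabla\mathbf{v}\|$ controls $\|\mathbf{v}\|_{1}$ on $\bm{H}^{1}_{0}$.
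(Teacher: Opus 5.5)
Your proof is correct. Parts (i)--(iii) follow the paper's argument. The paper cites N\'{e}d\'{e}lec and Thom\'{e}e for (i) and (ii); your C\'{e}a plus Aubin--Nitsche sketch, using convexity of $\mathscr{D}$, is the standard proof of the latter. For (iii) the paper uses the same splitting through $\mathscr{P}_{h}\mathbf{u}$, with $\lambda$-uniform coercivity and $\lambda$-dependent continuity of $a_{\lambda}$.

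The genuine difference is in (iv). The paper does no new estimate there. It takes the chain \eqref{e7.3.3},
$(\lambda+G)\|\nabla\cdot\mathbf{e}_{h}\|^{2}\le a_{\lambda}(\mathbf{e}_{h},\mathbf{e}_{h})\le[(\lambda+C)\|\mathbf{u}-\mathscr{P}_{h}\mathbf{u}\|_{1}+\alpha\|\mathcal{R}_{h}p-p\|]\,\|\mathbf{e}_{h}\|_{1}$,
and inserts \eqref{e7.3.4} for $\|\mathbf{e}_{h}\|_{1}$. So the $\lambda^{2}$ in the statement is simply the product of two $O(\lambda h^{\min\{l,\beta\}})$ factors. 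Your Young-inequality absorption against the $\lambda$-weighted divergence term never multiplies those factors. It gives $C(\lambda+1)h^{2\min\{l,\beta\}}\|\mathbf{u}\|_{\beta+1}^{2}$, a strictly stronger inequality for a few more lines of work.

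One small correction: absorbing the $p$-term is justified by $\|\nabla\cdot\mathbf{v}\|\le\sqrt{3}\|\nabla\mathbf{v}\|$, not by Poincar\'{e}. Alternatively you can absorb it into the divergence term; this is cosmetic.

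Your computation also bounds $\frac{G}{2}\|\nabla\mathbf{e}_{h}\|^{2}$ by the same $O(\lambda)$ right-hand side. Hence the same device would sharpen (iii) to a $\sqrt{\lambda}$ dependence, reflecting $a_{\lambda}(\mathbf{v},\mathbf{v})\le C\lambda\|\mathbf{v}\|_{1}^{2}$. The paper's cruder route is quicker, and it suffices for the lemma as stated and for the qualitative locking conclusion of Theorem \ref{t7.3.2}.
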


\begin{proof}
	The assertions $(i)$ and $(ii)$ directly follow from \cite[Theorem 2]{Ne1980} and \cite[Lemma 1.1]{Th2006}, respectively. 
	According to \eqref{e7.3.1}, we have
	\begin{equation*}
		a_{\lambda}(\mathcal{Q}_{h}\mathbf{u}-\mathscr{P}_{h}\mathbf{u},\mathbf{v}_{h}) 
		= a_{\lambda}(\mathbf{u}-\mathscr{P}_{h}\mathbf{u},\mathbf{v}_{h}) + (\mathcal{R}_{h}p-p,\alpha\nabla\cdot \mathbf{v}_{h}), \quad \forall \, \mathbf{v}_{h}\in \mathbb{U}_{h}.
	\end{equation*}
	Taking $\mathbf{v}_{h} = \mathcal{Q}_{h}\mathbf{u}-\mathscr{P}_{h}\mathbf{u} \in \mathbb{U}_{h}$ in the above equation and using \eqref{e7.2.4}, we get
	\begin{equation}\label{e7.3.3}
		\begin{aligned}
			&\ C \|\mathcal{Q}_{h}\mathbf{u}-\mathscr{P}_{h}\mathbf{u}\|_{1}^{2} 
			\leq |a_{\lambda}(\mathcal{Q}_{h}\mathbf{u}-\mathscr{P}_{h}\mathbf{u},\mathcal{Q}_{h}\mathbf{u}-\mathscr{P}_{h}\mathbf{u})|
			\\
			\leq&\ |a_{\lambda}(\mathbf{u},-\mathscr{P}_{h}\mathbf{u},\mathcal{Q}_{h}\mathbf{u}-\mathscr{P}_{h}\mathbf{u})| 
			+ |(\mathcal{R}_{h}p-p,\alpha\nabla\cdot (\mathcal{Q}_{h}\mathbf{u}-\mathscr{P}_{h}\mathbf{u}))| 
			\\
			\leq&\ \lambda\|\mathbf{u}-\mathscr{P}_{h}\mathbf{u}\|_{1} \|\mathcal{Q}_{h}\mathbf{u}-\mathscr{P}_{h}\mathbf{u}\|_{1} 
			+ C\|\mathbf{u}-\mathscr{P}_{h}\mathbf{u}\|_{1} \|\mathcal{Q}_{h}\mathbf{u}-\mathscr{P}_{h}\mathbf{u}\|_{1} 
			\\
			&\ + \alpha\|\mathcal{R}_{h}p-p\| \|\mathcal{Q}_{h}\mathbf{u}-\mathscr{P}_{h}\mathbf{u}\|_{1},
		\end{aligned}
	\end{equation}
	which together with Lemma \ref{l7.3.1} and $(ii)$ leads to
	\begin{equation}\label{e7.3.4}
		\begin{aligned}
			&\ \|\mathcal{Q}_{h}\mathbf{u}-\mathscr{P}_{h}\mathbf{u}\|_{1} 
			\leq C\lambda\|\mathbf{u}-\mathscr{P}_{h}\mathbf{u}\|_{1} 
			+ C\|\mathbf{u}-\mathscr{P}_{h}\mathbf{u}\|_{1}  
			+ C\|\mathcal{R}_{h}p-p\| 
			\\
			\leq&\ C\lambda h^{\min\{l,\beta\}}\|\mathbf{u}\|_{\beta+1} 
			+ Ch^{\min\{l,\beta\}}\|\mathbf{u}\|_{\beta+1} 
			+ Ch^{\min\{l,\beta\}+1}\|p\|_{\beta+1}.
		\end{aligned}
	\end{equation}
	Hence, we obtain
	\begin{align*}
		\|\mathcal{Q}_{h}\mathbf{u}-\mathbf{u}\|_{1} \leq&\ \|\mathcal{Q}_{h}\mathbf{u}-\mathscr{P}_{h}\mathbf{u}\|_{1} + \|\mathscr{P}_{h}\mathbf{u}-\mathbf{u}\|_{1} 
		\\
		\leq&\ C\lambda h^{\min\{l,\beta\}}\|\mathbf{u}\|_{\beta+1} 
		+ Ch^{\min\{l,\beta\}}\|\mathbf{u}\|_{\beta+1} 
		+ Ch^{\min\{l,\beta\}+1}\|p\|_{\beta+1}.
	\end{align*}
	By virtue of \eqref{e7.3.3} and \eqref{e7.3.4}, we have
	\begin{align*}
		&\ (\lambda+G)\|\nabla\cdot(\mathcal{Q}_{h}\mathbf{u}-\mathscr{P}_{h}\mathbf{u})\|^{2} 
		\leq |a_{\lambda}(\mathcal{Q}_{h}\mathbf{u}-\mathscr{P}_{h}\mathbf{u},\mathcal{Q}_{h}\mathbf{u}-\mathscr{P}_{h}\mathbf{u})| 
		\\
		\leq&\ C\lambda^{2} h^{2\min\{l,\beta\}}\|\mathbf{u}\|_{\beta+1}^{2} 
		+ Ch^{2\min\{l,\beta\}}\|\mathbf{u}\|_{\beta+1}^{2} 
		+ Ch^{2(\min\{l,\beta\}+1)}\|p\|_{\beta+1}^{2},
	\end{align*}
	which completes the proof.
\end{proof}

For any integer $N>0$, let $\tau=T/N$ be a temporal step-size and $t_{n}=n\tau$ $(0\leq n\leq N)$ be the time partition nodes. We apply the backward Euler method and conforming FEM to construct a fully discretized scheme to \eqref{e7.2.5}: find $(\mathbf{E}_{h}^{n},\mathbf{H}_{h}^{n},\mathbf{u}_{h}^{n},p_{h}^{n})\in \mathbb{E}_{h}\times \mathbb{H}_{h}\times \mathbb{U}_{h}\times \mathbb{P}_{h}$ such that
\begin{equation}\label{e7.3.5}
	(\mathbf{E}_{h}^{0},\mathbf{H}_{h}^{0},\mathbf{u}_{h}^{0},p_{h}^{0})
	= (\mathfrak{P}_{h}\mathbf{E}_{0},\mathcal{P}_{h}\mathbf{H}_{0},\mathscr{P}_{h}\mathbf{u}_{0},P_{h}p_{0}),
\end{equation}
and
\begin{equation}\label{e7.3.6}
	\begin{aligned}
		& (\epsilon \bar\partial_{t}\mathbf{E}_{h}^{n},\mathbf{D}_{h}) 
		+ (\sigma \mathbf{E}_{h}^{n},\mathbf{D}_{h}) 
		- (\mathbf{H}_{h}^{n},\nabla\times \mathbf{D}_{h}) 
		- (L\nabla p_{h}^{n},\mathbf{D}_{h}) 
		= (\mathbf{j}(t_{n}),\mathbf{D}_{h}),
		\\
		& (\mu \bar\partial_{t}\mathbf{H}_{h}^{n},\mathbf{B}_{h}) 
		+ (\nabla\times \mathbf{E}_{h}^{n},\mathbf{B}_{h}) = 0,
		\\
		& a_{\lambda}(\mathbf{u}_{h}^{n}, \mathbf{v}_{h}) 
		- (p_{h}^{n}, \alpha\nabla\cdot\mathbf{v}_{h})
		= 0,
		\\
		& \big(\bar\partial_{t}(c_{0}p_{h}^{n}+\alpha\nabla\cdot\mathbf{u}_{h}^{n}), q_{h}\big)
		+ (\kappa\nabla p_{h}^{n},\nabla q_{h}) 
		- (L\mathbf{E}_{h}^{n},\nabla q_{h}) 
		= (g(t_{n}),q_{h}),
	\end{aligned}
\end{equation}
for all $n=1,2,\ldots,N$ and $(\mathbf{D}_{h},\mathbf{B}_{h},\mathbf{v}_{h},q_{h})\in \mathbb{E}_{h}\times \mathbb{H}_{h}\times \mathbb{U}_{h}\times \mathbb{P}_{h}$, 
where $\bar\partial_{t}\mathbf{E}_{h}^{n} = (\mathbf{E}_{h}^{n}-\mathbf{E}_{h}^{n-1})/ \tau$, etc. 
In the next theorem, we study the $\lambda$-dependent a priori estimates of the fully discretized approximation.

\begin{theorem}\label{t7.3.1}
	Assume that (H1) and (H2) hold. The fully discretized equations \eqref{e7.3.6} and \eqref{e7.3.5} admit a unique solution $(\mathbf{E}_{h}^{n},\mathbf{H}_{h}^{n},\mathbf{u}_{h}^{n},p_{h}^{n})$. Furthermore, there exists a constant $C>0$ independent of $\tau$, $h$ and $\lambda$ such that for all $1\leq n\leq N$,
	\begin{equation*}
		\|\mathbf{E}_{h}^{n}\|^{2} 
		+ \|\mathbf{H}_{h}^{n}\|^{2} 
		+ \|\mathbf{u}_{h}^{n}\|_{1}^{2} 
		+ \|p_{h}^{n}\|^{2} 
		\leq C\lambda+C.
	\end{equation*}
\end{theorem}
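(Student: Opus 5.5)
The plan is to treat existence and uniqueness and the a priori bound together with one discrete energy argument. At each time level $n$, \eqref{e7.3.6} is a square linear system for $(\mathbf{E}_{h}^{n},\mathbf{H}_{h}^{n},\mathbf{u}_{h}^{n},p_{h}^{n})$ once the level $n-1$ quantities are known. So it suffices to show that the homogeneous system, with zero data and zero previous level, has only the trivial solution. This will follow from the same energy identity used for the bound, applied to that homogeneous system.

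For the energy identity, I would fix $n$ and choose test functions $\mathbf{D}_{h}=\mathbf{E}_{h}^{n}$, $\mathbf{B}_{h}=\mathbf{H}_{h}^{n}$, $q_{h}=p_{h}^{n}$, and $\mathbf{v}_{h}=\bar\partial_{t}\mathbf{u}_{h}^{n}\in\mathbb{U}_{h}$ in the third equation at level $n$. Summing the four equations has the following effects.
\begin{itemize}
\item The curl terms $-(\mathbf{H}_{h}^{n},\nabla\times\mathbf{E}_{h}^{n})+(\nabla\times\mathbf{E}_{h}^{n},\mathbf{H}_{h}^{n})$ cancel.
\item The coupling terms $-(p_{h}^{n},\alpha\nabla\cdot\bar\partial_{t}\mathbf{u}_{h}^{n})+(\alpha\nabla\cdot\bar\partial_{t}\mathbf{u}_{h}^{n},p_{h}^{n})$ cancel.
\item The two electrokinetic terms combine into $-2L(\mathbf{E}_{h}^{n},\nabla p_{h}^{n})$.
\end{itemize}
Next I would apply the identity $2(a,a-b)=\|a\|^{2}-\|b\|^{2}+\|a-b\|^{2}$, which also holds with the $a_{\lambda}$ inner product, and multiply by $2\tau$. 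This yields a telescoping quantity
\[
\mathcal{A}^{n}=\epsilon\|\mathbf{E}_{h}^{n}\|^{2}+\mu\|\mathbf{H}_{h}^{n}\|^{2}+a_{\lambda}(\mathbf{u}_{h}^{n},\mathbf{u}_{h}^{n})+c_{0}\|p_{h}^{n}\|^{2}.
\]
It also produces a dissipation term $2\tau\big(\sigma\|\mathbf{E}_{h}^{n}\|^{2}+\kappa\|\nabla p_{h}^{n}\|^{2}-2L(\mathbf{E}_{h}^{n},\nabla p_{h}^{n})\big)$. By (H1), writing $2L|(\mathbf{E},\nabla p)|\le L(\eta\|\mathbf{E}\|^{2}+\eta^{-1}\|\nabla p\|^{2})$ with $\eta=\sqrt{\kappa/\sigma}$, this dissipation dominates $\delta\tau(\|\mathbf{E}_{h}^{n}\|^{2}+\|\nabla p_{h}^{n}\|^{2})$ for some $\delta>0$ depending on $\sqrt{\sigma\kappa}-L$. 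The right-hand side is $2\tau(\mathbf{j}(t_{n}),\mathbf{E}_{h}^{n})+2\tau(g(t_{n}),p_{h}^{n})$. I would absorb it by Cauchy--Schwarz, Poincar\'{e} for $p_{h}^{n}\in H^{1}_{0}$, and Young, so that it is controlled by $C\tau(\|\mathbf{j}(t_{n})\|^{2}+\|g(t_{n})\|^{2})$ plus half of the dissipation. In the homogeneous case this already gives $\mathcal{A}^{n}\le 0$, hence the trivial solution by \eqref{e7.2.4}, which settles uniqueness and therefore existence.

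Summing from $1$ to $n$ gives $\mathcal{A}^{n}\le\mathcal{A}^{0}+C$, with $C$ depending only on the smooth data from (H2) and on $T$, and no discrete Gronwall argument is needed. The only $\lambda$-dependence enters through the initial term. The start value $\mathbf{u}_{h}^{0}=\mathscr{P}_{h}\mathbf{u}_{0}$ is an $L^{2}$ projection and is not discretely compatible with the constraint, so it carries no $\lambda$-uniform divergence bound. By \eqref{e7.2.4} and the $H^{1}$-stability of $\mathscr{P}_{h}$ (Lemma \ref{l7.3.1} with the triangle inequality), $a_{\lambda}(\mathbf{u}_{h}^{0},\mathbf{u}_{h}^{0})\le(\lambda+C)\|\mathscr{P}_{h}\mathbf{u}_{0}\|_{1}^{2}\le C\lambda+C$. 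The other initial terms are bounded by the $L^{2}$-stability of the projections. The lower bound in \eqref{e7.2.4} then gives $\|\mathbf{u}_{h}^{n}\|_{1}^{2}\le C\,a_{\lambda}(\mathbf{u}_{h}^{n},\mathbf{u}_{h}^{n})$, which yields the claimed estimate.

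The main point of care is the coupling cancellation. It requires using the displacement equation at level $n$ only, tested with $\bar\partial_{t}\mathbf{u}_{h}^{n}$, so that no compatibility at $n=0$ is needed; the cost is that the $\lambda$-dependent initial energy $a_{\lambda}(\mathbf{u}_{h}^{0},\mathbf{u}_{h}^{0})$ appears. The cross term $-2L(\mathbf{E},\nabla p)$ is handled routinely by (H1).
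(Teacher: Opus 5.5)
Your proof is correct. Its skeleton matches the paper's: the same test functions $(\mathbf{E}_{h}^{n},\mathbf{H}_{h}^{n},\bar\partial_{t}\mathbf{u}_{h}^{n},p_{h}^{n})$, the same cancellations, the same telescoping energy, and the same source of $\lambda$-dependence, namely $(\lambda+G)\|\nabla\cdot\mathscr{P}_{h}\mathbf{u}_{0}\|^{2}$ in the initial energy.

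You differ from the paper in how the estimate is closed.

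\begin{itemize}
\item The paper bounds $4\tau L(\mathbf{E}_{h}^{n},\nabla p_{h}^{n})\le \frac{2L^{2}}{\kappa}\tau\|\mathbf{E}_{h}^{n}\|^{2}+2\kappa\tau\|\nabla p_{h}^{n}\|^{2}$. It uses up the $\sigma$-dissipation on the $\mathbf{j}$-term and bounds the $g$-term by $\tau\|g\|^{2}+\tau\|p_{h}^{n}\|^{2}$. This leaves $\frac{2L^{2}}{\kappa}\tau\|\mathbf{E}_{h}^{i}\|^{2}+\tau\|p_{h}^{i}\|^{2}$, including $i=n$, on the right of \eqref{e7.3.10}, which it removes with the discrete Gr\"{o}nwall lemma. (H1) plays no visible role there. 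With level-$n$ terms on the right, that Gr\"{o}nwall step tacitly needs $\tau$ sufficiently small, and the paper's uniqueness argument inherits this.
\item You instead use (H1) to make $\sigma\|\mathbf{E}\|^{2}+\kappa\|\nabla p\|^{2}-2L(\mathbf{E},\nabla p)$ coercive, and absorb the data terms via Poincar\'{e}. This gives a pure telescoping bound $\mathcal{A}^{n}\le\mathcal{A}^{0}+C$ with no Gr\"{o}nwall and no step-size restriction. Your per-level uniqueness argument likewise works for every $\tau>0$.
\end{itemize}

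The only cost of your route is that the constant depends on $\sqrt{\sigma\kappa}-L$.

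One small slip needs fixing: the Young weight should be $\eta=\sqrt{\sigma/\kappa}$, not $\sqrt{\kappa/\sigma}$. With $\eta=\sqrt{\sigma/\kappa}$ the remaining coefficients are $\sqrt{\sigma/\kappa}\,(\sqrt{\sigma\kappa}-L)$ on $\|\mathbf{E}\|^{2}$ and $\sqrt{\kappa/\sigma}\,(\sqrt{\sigma\kappa}-L)$ on $\|\nabla p\|^{2}$. Both are positive under (H1). Your choice can fail when $\sigma\neq\kappa$: for example, $\sigma=1$, $\kappa=4$, $L=3/2$ satisfies (H1) but gives $\sigma-L\sqrt{\kappa/\sigma}=-2$.
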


\begin{proof}
	Notice that the number of linear equations \eqref{e7.3.6} equals the number of unknowns. Therefore, the uniqueness of its solution implies the existence. Here, we only prove the uniqueness.
	
	Taking $(\mathbf{D}_{h},\mathbf{B}_{h},\mathbf{v}_{h},q_{h})=(\mathbf{E}_{h}^{n},\mathbf{H}_{h}^{n},\bar\partial_{t}\mathbf{u}_{h}^{n},p_{h}^{n})$ in \eqref{e7.3.6}, and summing these up, we have
	\begin{equation}\label{e7.3.7}
		\begin{aligned}
			&\ (\epsilon \bar\partial_{t}\mathbf{E}_{h}^{n},\mathbf{E}_{h}^{n}) 
			+ (\mu \bar\partial_{t}\mathbf{H}_{h}^{n},\mathbf{H}_{h}^{n}) 
			+ ((\lambda+G)\nabla\cdot \mathbf{u}_{h}^{n}, \nabla\cdot\bar\partial_{t}\mathbf{u}_{h}^{n})
			\\
			&\ + (G\nabla\mathbf{u}_{h}^{n}, \nabla\bar\partial_{t}\mathbf{u}_{h}^{n}) 
			+ (c_{0}\bar\partial_{t} p_{h}^{n},p_{h}^{n}) 
			+ \sigma \|\mathbf{E}_{h}^{n}\|^{2} 
			+ \kappa \|\nabla p_{h}^{n}\|^{2} 
			\\
			= &\ 2(L \mathbf{E}_{h}^{n},\nabla p_{h}^{n}) 
			+ (\mathbf{j}(t_{n}),\mathbf{E}_{h}^{n}) 
			+ (g(t_{n}),p_{h}^{n}).
		\end{aligned}
	\end{equation}
	Direct computation leads to
	\begin{equation}\label{e7.3.8}
		\begin{aligned}
			\tau(\bar\partial_{t}\mathbf{E}_{h}^{n},\mathbf{E}_{h}^{n}) 
			=&\ \frac{\tau^{2}}{2}\|\bar\partial_{t}\mathbf{E}_{h}^{n}\|^{2} 
			+ \frac{1}{2}\|\mathbf{E}_{h}^{n}\|^{2} 
			- \frac{1}{2}\|\mathbf{E}_{h}^{n-1}\|^{2} 
			\\
			\geq&\ \frac{1}{2}\|\mathbf{E}_{h}^{n}\|^{2} - \frac{1}{2}\|\mathbf{E}_{h}^{n-1}\|^{2}.
		\end{aligned}
	\end{equation}
	Similarly, we get
	\begin{equation}\label{e7.3.9}
		\begin{aligned}
			\tau(\bar\partial_{t}\mathbf{H}_{h}^{n},\mathbf{H}_{h}^{n}) 
			\geq&\ \frac{1}{2}\|\mathbf{H}_{h}^{n}\|^{2} - \frac{1}{2}\|\mathbf{H}_{h}^{n-1}\|^{2},
			\\
			\tau(\nabla\cdot\mathbf{u}_{h}^{n},\nabla\cdot\bar\partial_{t}\mathbf{u}_{h}^{n}) 
			\geq&\ \frac{1}{2} \|\nabla\cdot \mathbf{u}_{h}^{n}\|^{2} - \frac{1}{2} \|\nabla\cdot \mathbf{u}_{h}^{n-1}\|^{2},
			\\
			\tau(\nabla\mathbf{u}_{h}^{n},\nabla\bar\partial_{t}\mathbf{u}_{h}^{n}) 
			\geq&\ \frac{1}{2}\|\nabla \mathbf{u}_{h}^{n}\|^{2} - \frac{1}{2}\|\nabla \mathbf{u}_{h}^{n-1}\|^{2},
			\\
			\tau(\bar\partial_{t}p_{h}^{n},p_{h}^{n}) 
			\geq&\ \frac{1}{2}\|p_{h}^{n}\|^{2} - \frac{1}{2}\|p_{h}^{n-1}\|^{2}.
		\end{aligned}
	\end{equation}
	Applying \eqref{e7.3.7}--\eqref{e7.3.9}, the Cauchy--Schwarz and Young inequalities, we obtain
	\begin{align*}
		&\ \epsilon\|\mathbf{E}_{h}^{n}\|^{2} - \epsilon\|\mathbf{E}_{h}^{n-1}\|^{2} 
		+ \mu\|\mathbf{H}_{h}^{n}\|^{2} - \mu\|\mathbf{H}_{h}^{n-1}\|^{2} 
		+ (\lambda+G)\|\nabla\cdot \mathbf{u}_{h}^{n}\|^{2} 
		\\
		&\ - (\lambda+G)\|\nabla\cdot \mathbf{u}_{h}^{n-1}\|^{2} 
		+ G\|\nabla \mathbf{u}_{h}^{n}\|^{2} - G\|\nabla \mathbf{u}_{h}^{n-1}\|^{2}
		+ c_{0}\|p_{h}^{n}\|^{2}  
		\\
		&\ - c_{0}\|p_{h}^{n-1}\|^{2} 
		+ 2\sigma\tau \|\mathbf{E}_{h}^{n}\|^{2} 
		+ 2\kappa\tau \|\nabla p_{h}^{n}\|^{2} 
		\\
		\leq&\ \frac{2L^{2}}{\kappa}\tau\|\mathbf{E}_{h}^{n}\|^{2} 
		+ 2\kappa\tau \|\nabla p_{h}^{n}\|^{2} 
		+ \frac{1}{2\sigma}\tau\|\mathbf{j}(t_{n})\|^{2} 
		+ 2\sigma\tau\|\mathbf{E}_{h}^{n}\|^{2} 
		\\
		&\ + \tau\|g(t_{n})\|^{2} 
		+ \tau\|p_{h}^{n}\|^{2}.
	\end{align*}
	Inductively for $n\geq 1$ in the above inequality, we get
	\begin{equation}\label{e7.3.10}
		\begin{aligned}
			&\ \epsilon\|\mathbf{E}_{h}^{n}\|^{2} + \mu\|\mathbf{H}_{h}^{n}\|^{2} 
			+ (\lambda+G)\|\nabla\cdot \mathbf{u}_{h}^{n}\|^{2} + G\|\nabla \mathbf{u}_{h}^{n}\|^{2} 
			+ c_{0}\|p_{h}^{n}\|^{2}
			\\
			\leq&\ \epsilon\|\mathbf{E}_{h}^{0}\|^{2} + \mu\|\mathbf{H}_{h}^{0}\|^{2} 
			+ (\lambda+G)\|\nabla\cdot \mathbf{u}_{h}^{0}\|^{2} + G\|\nabla \mathbf{u}_{h}^{0}\|^{2} 
			+ c_{0}\|p_{h}^{0}\|^{2}
			\\
			&\ + \frac{1}{2\sigma}\sum_{i=1}^{n}\tau\|\mathbf{j}(t_{i})\|^{2} 
			+ \sum_{i=1}^{n}\tau\|g(t_{i})\|^{2}
			+ \frac{2L^{2}}{\kappa}\sum_{i=1}^{n}\tau\|\mathbf{E}_{h}^{i}\|^{2} 
			+ \sum_{i=1}^{n}\tau\|p_{h}^{i}\|^{2}.
		\end{aligned}
	\end{equation}
	From \eqref{e7.3.5}, it follows that
	\begin{equation}\label{e7.3.11}
		\begin{aligned}
			& \|\mathbf{E}_{h}^{0}\| \leq \|\mathbf{E}_{0}\|,
			\quad
			\|\mathbf{H}_{h}^{0}\| \leq \|\mathbf{H}_{0}\|,
			\quad
			\|\nabla\cdot\mathbf{u}_{h}^{0}\| \leq C\|\mathbf{u}_{0}\|_{1},
			\\
			& \|\nabla\mathbf{u}_{h}^{0}\| \leq C\|\mathbf{u}_{0}\|_{1},
			\quad
			\|p_{h}^{0}\| \leq \|p_{0}\|,
		\end{aligned}
	\end{equation}
	where $C>0$ is a constant independent of $\tau$, $h$ and $\lambda$. 
	Noticing (H2) and applying the discrete Gr\"{o}nwall lemma to \eqref{e7.3.10}, we get
	\begin{equation*}
		\|\mathbf{E}_{h}^{n}\|^{2} + \|\mathbf{H}_{h}^{n}\|^{2} 
		+ \|\nabla\cdot \mathbf{u}_{h}^{n}\|^{2} + \|\nabla \mathbf{u}_{h}^{n}\|^{2} 
		+ \|p_{h}^{n}\|^{2} 
		\leq (C\lambda+C) e^{C\sum_{i=1}^{n}\tau} = C\lambda+C,
	\end{equation*}
	where $C>0$ is a constant independent of $\tau$, $h$ and $\lambda$. 
	The Poincar\'{e} inequality implies 
	\begin{equation*}
		\|\mathbf{u}_{h}^{n}\|_{1}^{2}\leq C\|\nabla \mathbf{u}_{h}^{n}\|^{2}
		\leq C\lambda+C.
	\end{equation*}
	
	Assume that $\mathbf{U}_{i,h}^{n}:= (\mathbf{E}_{i,h}^{n},\mathbf{H}_{i,h}^{n},\mathbf{u}_{i,h}^{n},p_{i,h}^{n})$ $(i=1,2;\ n=1,2,\ldots,N)$ are two solutions of \eqref{e7.3.6} and \eqref{e7.3.5}. Then $\mathbf{U}_{1,h}^{n}-\mathbf{U}_{2,h}^{n}$ satisfies a corresponding homogeneous equations with homogeneous initial and boundary conditions. Thus, we obtain for all $1\leq n\leq N$,
	\begin{equation*}
		\|\mathbf{E}_{1,h}^{n}-\mathbf{E}_{2,h}^{n}\|^{2} 
		+ \|\mathbf{H}_{1,h}^{n}-\mathbf{H}_{2,h}^{n}\|^{2} 
		+ \|\mathbf{u}_{1,h}^{n}-\mathbf{u}_{2,h}^{n}\|_{1}^{2} 
		+ \|p_{1,h}^{n}-p_{2,h}^{n}\|^{2} \leq 0,
	\end{equation*}
	which implies the uniqueness and the proof is completed.
\end{proof}

\begin{remark}\label{r7.3.1}
	This theorem indicates that the numerical solutions are no longer uniformly bounded with respect to $\lambda$.
\end{remark}

\subsection{Error estimates with locking effects}\label{s7.3.2}
In this part, we investigate the error estimates between the exact and numerical solutions to \eqref{e7.2.5} to reveal the causes of the Poisson locking phenomenon in the numerical approximations. 

The errors can be decomposed as, for any $0\leq n\leq N$,
\begin{align*}
	\mathbf{E}(t_{n})-\mathbf{E}_{h}^{n} =&\ (\mathbf{E}(t_{n})-\mathcal{I}_{h}\mathbf{E}(t_{n})) + (\mathcal{I}_{h}\mathbf{E}(t_{n})-\mathbf{E}_{h}^{n}) =: \Xi(t_{n}) + \Pi_{h}^{n}, 
	\\
	\mathbf{H}(t_{n})-\mathbf{H}_{h}^{n} =&\ (\mathbf{H}(t_{n})-\mathcal{P}_{h}\mathbf{H}(t_{n})) + (\mathcal{P}_{h}\mathbf{H}(t_{n})-\mathbf{H}_{h}^{n}) =: \Lambda(t_{n}) + \Theta_{h}^{n}, 
	\\
	\mathbf{u}(t_{n})-\mathbf{u}_{h}^{n} =&\ (\mathbf{u}(t_{n})-\mathcal{Q}_{h}\mathbf{u}(t_{n})) + (\mathcal{Q}_{h}\mathbf{u}(t_{n})-\mathbf{u}_{h}^{n}) =: \boldsymbol{\rho}(t_{n}) + \boldsymbol{\xi}_{h}^{n}, 
	\\
	p(t_{n})-p_{h}^{n} =&\ (p(t_{n})-\mathcal{R}_{h}p(t_{n})) + (\mathcal{R}_{h}p(t_{n})-p_{h}^{n}) =: \gamma(t_{n}) + \eta_{h}^{n}.
\end{align*}
The error estimates for functions $\Xi(t_{n})$, $\Lambda(t_{n})$, $\boldsymbol{\rho}(t_{n})$ and $\gamma(t_{n})$ follow from Lemmas \ref{l7.3.1} and \ref{l7.3.2} and (H3), i.e., there exists a constant $C>0$ independent of $\tau$, $h$ and $\lambda$ such that for all $1\leq n\leq N$,
\begin{equation}\label{e7.3.12}
	\begin{aligned}
		\|\Xi(t_{n})\| 
		\leq&\ Ch^{\min\{l,\beta\}}\|\mathbf{E}(t_{n})\|_{\beta+1} \leq Ch^{\min\{l,\beta\}}, 
		\\
		\|\Lambda(t_{n})\| 
		\leq&\ Ch^{\min\{l,\beta\}}\|\mathbf{H}(t_{n})\|_{\beta} \leq Ch^{\min\{l,\beta\}}, 
		\\
		\|\boldsymbol{\rho}(t_{n})\|_{1} 
		\leq&\ C\lambda h^{\min\{l,\beta\}}\|\mathbf{u}(t_{n})\|_{\beta+1} 
		+ C h^{\min\{l,\beta\}}\|\mathbf{u}(t_{n})\|_{\beta+1} 
		\\
		&\ + Ch^{\min\{l,\beta\}+1}\|p(t_{n})\|_{\beta+1} 
		\leq C\lambda h^{\min\{l,\beta\}}+Ch^{\min\{l,\beta\}}, 
		\\
		\|\gamma(t_{n})\| 
		\leq&\ Ch^{\min\{l,\beta\}+1}\|p(t_{n})\|_{\beta+1} \leq Ch^{\min\{l,\beta\}+1}.
	\end{aligned}
\end{equation}

\begin{theorem}\label{t7.3.2}
	Assume (H1)--(H3). Let $(\mathbf{E},\mathbf{H},\mathbf{u},p)$ and $(\mathbf{E}_{h}^{n},\mathbf{H}_{h}^{n},\mathbf{u}_{h}^{n},p_{h}^{n})$ be the exact and numerical solutions to \eqref{e7.2.5}, respectively. Then there exists a constant $C>0$ independent of $\tau$, $h$ and $\lambda$ such that for all $1\leq n\leq N$,
	\begin{align*}
		&\ \|\mathbf{E}(t_{n})-\mathbf{E}_{h}^{n}\|^{2} 
		+ \|\mathbf{H}(t_{n})-\mathbf{H}_{h}^{n}\|^{2} 
		+ \|\mathbf{u}(t_{n})-\mathbf{u}_{h}^{n}\|_{1}^{2} 
		+ \|p(t_{n})-p_{h}^{n}\|^{2} 
		\\
		\leq&\ C(\tau^{2}+\lambda^{2}h^{2\min\{l,\beta\}}+h^{2\min\{l,\beta\}}).
	\end{align*}
\end{theorem}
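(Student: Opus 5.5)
We follow the standard splitting argument. The projection parts $\Xi,\Lambda,\boldsymbol{\rho},\gamma$ are already controlled by \eqref{e7.3.12}, so by the triangle inequality it suffices to bound the discrete parts $\Pi_h^n,\Theta_h^n,\boldsymbol{\xi}_h^n,\eta_h^n$ by $C(\tau^2+\lambda^2h^{2\min\{l,\beta\}}+h^{2\min\{l,\beta\}})$.

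\emph{Error equations.} Evaluate \eqref{e7.2.5} at $t=t_n$ with discrete test functions, subtract \eqref{e7.3.6}, and insert the splitting. The Ritz pair \eqref{e7.3.1}--\eqref{e7.3.2} is built so that the third equation becomes exactly
\begin{equation*}
a_\lambda(\boldsymbol{\xi}_h^n,\mathbf{v}_h)-(\eta_h^n,\alpha\nabla\cdot\mathbf{v}_h)=0 .
\end{equation*}
In the fourth equation the term $(\kappa\nabla\gamma,\nabla q_h)$ vanishes. The $L^2$-projection $\mathcal{P}_h$ gives $(\mu\bar\partial_t\Lambda(t_n),\mathbf{B}_h)=0$. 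Since $\nabla\times\mathbb{E}_h\subset\mathbb{H}_h$, we also have $(\Lambda(t_n),\nabla\times\mathbf{D}_h)=0$. The remaining right-hand sides are of three kinds:
\begin{itemize}
\item the truncation terms $r^n=\partial_t(\cdot)(t_n)-\bar\partial_t(\cdot)(t_n)$ for $\mathbf{E},\mathbf{H},p,\nabla\cdot\mathbf{u}$;
\item the projection terms $\epsilon\bar\partial_t\Xi$, $\sigma\Xi$, $(\nabla\times\Xi,\mathbf{B}_h)$, $L\nabla\gamma$ and $L\Xi$;
\item the coupling term $(\bar\partial_t(c_0\gamma+\alpha\nabla\cdot\boldsymbol{\rho}),q_h)$.
\end{itemize}

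\emph{Energy estimate.} Test with $(\Pi_h^n,\Theta_h^n,\bar\partial_t\boldsymbol{\xi}_h^n,\eta_h^n)$, using the time-differenced third error equation so that the $\alpha$-coupling terms cancel. Then, exactly as in \eqref{e7.3.7}--\eqref{e7.3.10}:
\begin{itemize}
\item the $L$-coupling is absorbed by (H1);
\item the identities \eqref{e7.3.8}--\eqref{e7.3.9} telescope the energy
\begin{equation*}
\epsilon\|\Pi_h^n\|^2+\mu\|\Theta_h^n\|^2+(\lambda+G)\|\nabla\cdot\boldsymbol{\xi}_h^n\|^2+G\|\nabla\boldsymbol{\xi}_h^n\|^2+c_0\|\eta_h^n\|^2 ;
\end{equation*}
\item each source term is handled by Cauchy--Schwarz and Young.
\end{itemize}
The terms involving $\nabla\gamma$ are moved onto $\Pi_h^n$ and are of order $h^{\min\{l,\beta\}}$ by Lemma \ref{l7.3.2}(ii). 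Taylor's formula with (H3) gives $\sum_i\tau\|r^i\|^2\le C\tau^2$. Writing $\bar\partial_t\Xi(t_n)=\tau^{-1}\int_{t_{n-1}}^{t_n}(I-\mathcal{I}_h)\partial_t\mathbf{E}$ gives order $h^{\min\{l,\beta\}}$.

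\emph{Initial data.} By \eqref{e7.3.5}, $\Pi_h^0=\mathcal{I}_h\mathbf{E}_0-\mathfrak{P}_h\mathbf{E}_0$, and similarly for the other fields. These are bounded by Lemmas \ref{l7.3.1}--\ref{l7.3.2}. The displacement part is the delicate one: $(\lambda+G)\|\nabla\cdot(\mathcal{Q}_h\mathbf{u}_0-\mathscr{P}_h\mathbf{u}_0)\|^2$ is controlled by Lemma \ref{l7.3.2}(iv), which is exactly where the factor $\lambda^2h^{2\min\{l,\beta\}}$ first enters.

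\emph{Main obstacle.} The hard term is $(\alpha\nabla\cdot\bar\partial_t\boldsymbol{\rho}(t_n),\eta_h^n)$. Here $\bar\partial_t\boldsymbol{\rho}=(I-\mathcal{Q}_h)\bar\partial_t\mathbf{u}$, because the Ritz maps are linear and commute with difference quotients. Lemma \ref{l7.3.2}(iii), applied to $\partial_t\mathbf{u}$ with (H3), then gives
\begin{equation*}
\sum_i\tau\|\bar\partial_t\boldsymbol{\rho}^i\|_1^2\le C(\lambda^2+1)h^{2\min\{l,\beta\}} .
\end{equation*}
Young's inequality against $\tau\|\eta_h^n\|^2$ keeps this term compatible with the discrete Gr\"onwall lemma, with constants independent of $\lambda$. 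Applying Gr\"onwall yields the bound for the discrete errors, and Poincar\'e gives $\|\boldsymbol{\xi}_h^n\|_1$. Finally we add the projection errors from \eqref{e7.3.12}, where $\|\boldsymbol{\rho}(t_n)\|_1^2\le C\lambda^2h^{2\min\{l,\beta\}}+Ch^{2\min\{l,\beta\}}$. This yields the stated estimate.
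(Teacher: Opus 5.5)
Your proposal is correct and follows essentially the same argument as the paper. It uses the same splitting through $\mathcal{I}_h,\mathcal{P}_h,\mathcal{Q}_h,\mathcal{R}_h$ and the same test functions $(\Pi_h^n,\Theta_h^n,\bar\partial_t\boldsymbol{\xi}_h^n,\eta_h^n)$; the factor $\lambda^2h^{2\min\{l,\beta\}}$ enters only through Lemma \ref{l7.3.2}(iii)--(iv), via the initial error $\boldsymbol{\xi}_h^0$ and via $\sum_i\tau\|\bar\partial_t\boldsymbol{\rho}(t_i)\|_1^2$, and the argument closes with the discrete Gr\"onwall lemma.

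The differences are cosmetic. You correctly note $(\mu\bar\partial_t\Lambda(t_n),\mathbf{B}_h)=0$, a term the paper keeps and bounds. The paper handles the $L$-coupling by Young's inequality plus Gr\"onwall rather than by (H1), and either way works. One wording point: the $\alpha$-cancellation comes from testing the undifferenced third error equation at step $n$ with $\bar\partial_t\boldsymbol{\xi}_h^n$, not from differencing it in time. That equation fails at $n=0$, because the initial data are $L^2$ projections.
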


\begin{proof}
	Taking $\mathbf{D} = \mathbf{D}_{h}\in \mathbb{E}_{h}$ in the first equation of \eqref{e7.2.5} and noticing the first equation of \eqref{e7.3.6}, we have
	\begin{equation}\label{e7.3.13}
		\begin{aligned}
			&\ (\epsilon\bar\partial_{t}\Pi_{h}^{n},\mathbf{D}_{h}) = (\epsilon\bar\partial_{t}(\mathcal{I}_{h}\mathbf{E}(t_{n})-\mathbf{E}_{h}^{n}),\mathbf{D}_{h}) 
			\\
			=&\ \big(\epsilon\bar\partial_{t}(\mathcal{I}_{h}\mathbf{E}(t_{n})-\mathbf{E}(t_{n})),\mathbf{D}_{h} \big) 
			+ \big( \epsilon(\bar\partial_{t}\mathbf{E}(t_{n})-\frac{\partial}{\partial t}\mathbf{E}(t_{n})),\mathbf{D}_{h}\big) 
			\\
			&\ + \big(\epsilon\frac{\partial}{\partial t}\mathbf{E}(t_{n}),\mathbf{D}_{h}\big) 
			- (\epsilon\bar\partial_{t}\mathbf{E}_{h}^{n},\mathbf{D}_{h})
			\\
			=&\ -(\epsilon\bar\partial_{t}\Xi(t_{n}),\mathbf{D}_{h})
			+ \big(\epsilon(\bar\partial_{t}\mathbf{E}(t_{n})-\frac{\partial}{\partial t}\mathbf{E}(t_{n})),\mathbf{D}_{h}\big) 
			\\
			&\ - (\sigma \Xi(t_{n}),\mathbf{D}_{h}) 
			- (\sigma \Pi_{h}^{n},\mathbf{D}_{h}) 
			+ (\Lambda(t_{n}),\nabla\times \mathbf{D}_{h}) 
			\\
			&\ + (\Theta_{h}^{n},\nabla\times \mathbf{D}_{h}) 
			+ (L\nabla \gamma(t_{n}),\mathbf{D}_{h}) 
			+ (L\nabla \eta_{h}^{n},\mathbf{D}_{h}).
		\end{aligned}
	\end{equation}
	Since $\nabla\times \mathbf{D}_{h}|_{K} \in \mathbb{H}_{h}|_{K}$, we have $(\Lambda(t_{n}),\nabla\times \mathbf{D}_{h}) = 0$. 
	Similarly, by the second equations of \eqref{e7.2.5} and of \eqref{e7.3.6}, respectively, we obtain for any $\mathbf{B}_{h}\in \mathbb{H}_{h}$,
	\begin{equation}\label{e7.3.14}
		\begin{aligned}
			&\ (\mu\bar\partial_{t}\Theta_{h}^{n},\mathbf{B}_{h}) = (\mu\bar\partial_{t}(\mathcal{P}_{h}\mathbf{H}(t_{n})-\mathbf{H}_{h}^{n}),\mathbf{B}_{h}) 
			\\
			=&\ \big(\mu\bar\partial_{t}(\mathcal{P}_{h}\mathbf{H}(t_{n})-\mathbf{H}(t_{n})), \mathbf{B}_{h}\big) 
			+ \big(\mu(\bar\partial_{t}\mathbf{H}(t_{n})-\frac{\partial}{\partial t}\mathbf{H}(t_{n})),\mathbf{B}_{h}\big) 
			\\
			&\ + \big(\mu\frac{\partial}{\partial t}\mathbf{H}(t_{n}),\mathbf{B}_{h}\big) 
			- (\mu\bar\partial_{t}\mathbf{H}_{h}^{n},\mathbf{B}_{h})
			\\
			=&\ - (\mu\bar\partial_{t}\Lambda(t_{n}), \mathbf{B}_{h}) 
			+ \big(\mu(\bar\partial_{t}\mathbf{H}(t_{n})-\frac{\partial}{\partial t}\mathbf{H}(t_{n})),\mathbf{B}_{h}\big) 
			\\
			&\ - (\nabla\times \Xi(t_{n}),\mathbf{B}_{h}) 
			- (\nabla\times \Pi_{h}^{n},\mathbf{B}_{h}).
		\end{aligned}
	\end{equation}
	By \eqref{e7.3.1}, the third equations of \eqref{e7.2.5} and of \eqref{e7.3.6}, respectively, we get for any $\mathbf{v}_{h}\in \mathbb{U}_{h}$,
	\begin{equation}\label{e7.3.15}
		\begin{aligned}
			&\ a_{\lambda}(\boldsymbol{\xi}_{h}^{n},\mathbf{v}_{h}) - (\eta_{h}^{n},\alpha\nabla\cdot\mathbf{v}_{h}) 
			\\
			=&\ a_{\lambda}(\mathcal{Q}_{h}\mathbf{u}(t_{n}),\mathbf{v}_{h}) 
			- (\mathcal{R}_{h}p(t_{n}),\alpha\nabla\cdot\mathbf{v}_{h}) 
			- a_{\lambda}(\mathbf{u}_{h}^{n},\mathbf{v}_{h}) 
			+ (p_{h}^{n},\alpha\nabla\cdot\mathbf{v}_{h})
			\\
			=&\ a_{\lambda}(\mathbf{u}(t_{n}),\mathbf{v}_{h}) 
			- (p(t_{n}),\alpha\nabla\cdot\mathbf{v}_{h}) 
			- a_{\lambda}(\mathbf{u}_{h}^{n},\mathbf{v}_{h}) 
			+ (p_{h}^{n},\alpha\nabla\cdot\mathbf{v}_{h})
			=0.
		\end{aligned}
	\end{equation}
	By virtue of \eqref{e7.3.2}, the last equations of \eqref{e7.2.5} and of \eqref{e7.3.6}, respectively, we have for any $q_{h}\in \mathbb{P}_{h}$,
	\begin{equation}\label{e7.3.16}
		\begin{aligned}
			&\ (c_{0}\bar\partial_{t}\eta_{h}^{n},q_{h}) + (\alpha\nabla\cdot\bar\partial_{t}\boldsymbol{\xi}_{h}^{n},q_{h}) 
			+ (\kappa\nabla\eta_{h}^{n},\nabla q_{h})
			\\
			=&\ (c_{0}\bar\partial_{t}\mathcal{R}_{h}p(t_{n}),q_{h}) 
			+ (\alpha\nabla\cdot\bar\partial_{t}\mathcal{Q}_{h}\mathbf{u}(t_{n}),q_{h}) 
			+ (\kappa\nabla\mathcal{R}_{h}p(t_{n}),\nabla q_{h}) 
			\\
			&\ - (c_{0}\bar\partial_{t}p_{h}^{n},q_{h}) 
			- (\alpha\nabla\cdot\bar\partial_{t}\mathbf{u}_{h}^{n},q_{h}) 
			- (\kappa\nabla p_{h}^{n},\nabla q_{h})
			\\
			=&\ \big(c_{0}\bar\partial_{t}(\mathcal{R}_{h}p(t_{n})-p(t_{n})),q_{h}\big) 
			+ \big(c_{0}(\bar\partial_{t}p(t_{n})-\frac{\partial}{\partial t}p(t_{n})),q_{h}\big) 
			\\
			&\ + \big(\alpha\nabla\cdot\bar\partial_{t}(\mathcal{Q}_{h}\mathbf{u}(t_{n})-\mathbf{u}(t_{n})),q_{h}\big) 
			+ \big(\alpha\nabla\cdot(\bar\partial_{t}\mathbf{u}(t_{n})-\frac{\partial}{\partial t}\mathbf{u}(t_{n})),q_{h}\big) 
			\\
			&\ + \big(c_{0}\frac{\partial}{\partial t}p(t_{n}),q_{h}\big) 
			+ \big(\alpha\nabla\cdot\frac{\partial}{\partial t}\mathbf{u}(t_{n}),q_{h}\big) 
			+ (\kappa\nabla p(t_{n}),\nabla q_{h}) 
			\\
			&\ - (c_{0}\bar\partial_{t}p_{h}^{n},q_{h}) 
			- (\alpha\nabla\cdot\bar\partial_{t}\mathbf{u}_{h}^{n},q_{h}) 
			- (\kappa\nabla p_{h}^{n},\nabla q_{h})
			\\
			=&\ - (c_{0}\bar\partial_{t}\gamma(t_{n}),q_{h}) 
			+ \big(c_{0}(\bar\partial_{t}p(t_{n}) - \frac{\partial}{\partial t}p(t_{n})),q_{h}\big) 
			- (\alpha\nabla\cdot\bar\partial_{t}\boldsymbol{\rho}(t_{n}),q_{h}) 
			\\
			&\ + \big(\alpha\nabla\cdot(\bar\partial_{t}\mathbf{u}(t_{n})-\frac{\partial}{\partial t}\mathbf{u}(t_{n})),q_{h}\big) 
			+ (L\Xi(t_{n}),\nabla q_{h}) 
			+ (L\Pi_{h}^{n},\nabla q_{h}).
		\end{aligned}
	\end{equation}
	To simplify notations, define
	\begin{align*}
		& \bm{e}_{1}(t_{n})= \bar\partial_{t}\mathbf{E}(t_{n})-\frac{\partial}{\partial t}\mathbf{E}(t_{n}), 
		\quad
		\bm{e}_{2}(t_{n})= \bar\partial_{t}\mathbf{H}(t_{n})-\frac{\partial}{\partial t}\mathbf{H}(t_{n}),
		\\
		& e_{3}(t_{n})= \bar\partial_{t}p(t_{n}) - \frac{\partial}{\partial t}p(t_{n}), 
		\quad
		\bm{e}_{4}(t_{n})= \bar\partial_{t}\mathbf{u}(t_{n})-\frac{\partial}{\partial t}\mathbf{u}(t_{n}).
	\end{align*}
	Taking $\mathbf{D}_{h}=\Pi_{h}^{n}$, $\mathbf{B}_{h}=\Theta_{h}^{n}$, $\mathbf{v}_{h}=\bar\partial_{t}\boldsymbol{\xi}_{h}^{n}$ and $q_{h}=\eta_{h}^{n}$ in \eqref{e7.3.13}--\eqref{e7.3.16}, respectively, and summing these up, we obtain an error equation
	\begin{equation}\label{e7.3.17}
		\begin{aligned}
			&\ (\epsilon\bar\partial_{t}\Pi_{h}^{n},\Pi_{h}^{n}) 
			+ (\mu\bar\partial_{t}\Theta_{h}^{n},\Theta_{h}^{n}) 
			+ a_{\lambda}(\boldsymbol{\xi}_{h}^{n},\bar\partial_{t}\boldsymbol{\xi}_{h}^{n}) 
			\\
			&\ + (c_{0}\bar\partial_{t}\eta_{h}^{n}, \eta_{h}^{n}) 
			+ (\kappa\nabla \eta_{h}^{n},\nabla \eta_{h}^{n}) 
			\\
			=&\ -(\epsilon\bar\partial_{t}\Xi(t_{n}),\Pi_{h}^{n})
			+ (\epsilon\bm{e}_{1}(t_{n}),\Pi_{h}^{n}) 
			- (\sigma \Xi(t_{n}),\Pi_{h}^{n}) 
			- (\sigma \Pi_{h}^{n},\Pi_{h}^{n}) 
			\\
			&\ + (L\nabla \gamma(t_{n}),\Pi_{h}^{n}) 
			+ 2(L\nabla \eta_{h}^{n},\Pi_{h}^{n}) 
			- (\mu\bar\partial_{t}\Lambda(t_{n}), \Theta_{h}^{n}) 
			+ (\mu\bm{e}_{2}(t_{n}),\Theta_{h}^{n}) 
			\\
			&\ - (\nabla\times \Xi(t_{n}),\Theta_{h}^{n}) 
			- (c_{0}\bar\partial_{t}\gamma(t_{n}),\eta_{h}^{n}) 
			+ (c_{0}e_{3}(t_{n}),\eta_{h}^{n}) 
			\\
			&\ - (\alpha\nabla\cdot\bar\partial_{t}\boldsymbol{\rho}(t_{n}),\eta_{h}^{n}) 
			+ (\alpha\nabla\cdot\bm{e}_{4}(t_{n}),\eta_{h}^{n}) 
			+ (L\Xi(t_{n}),\nabla \eta_{h}^{n}).
		\end{aligned}
	\end{equation}
	In a similar way as proving \eqref{e7.3.8}, we have
	\begin{equation}\label{e7.3.18}
		\begin{aligned}
			\tau(\bar\partial_{t}\Pi_{h}^{n},\Pi_{h}^{n}) 
			\geq&\ \frac{1}{2}\|\Pi_{h}^{n}\|^{2} - \frac{1}{2}\|\Pi_{h}^{n-1}\|^{2},
			\\
			\tau(\bar\partial_{t}\Theta_{h}^{n},\Theta_{h}^{n}) 
			\geq&\ \frac{1}{2}\|\Theta_{h}^{n}\|^{2} - \frac{1}{2}\|\Theta_{h}^{n-1}\|^{2},
			\\
			\tau a_{\lambda}(\boldsymbol{\xi}_{h}^{n},\bar\partial_{t}\boldsymbol{\xi}_{h}^{n})
			\geq&\ \frac{\lambda+G}{2}\big(\|\nabla\cdot\boldsymbol{\xi}_{h}^{n}\|^{2}-\|\nabla\cdot\boldsymbol{\xi}_{h}^{n-1}\|^{2}\big) 
			\\
			&\ + \frac{G}{2}\big(\|\nabla\boldsymbol{\xi}_{h}^{n}\|^{2}-\|\nabla\boldsymbol{\xi}_{h}^{n-1}\|^{2}\big),
			\\
			\tau(\bar\partial_{t}\eta_{h}^{n}, \eta_{h}^{n}) 
			\geq&\ \frac{1}{2}\|\eta_{h}^{n}\|^{2} - \frac{1}{2}\|\eta_{h}^{n-1}\|^{2}.
		\end{aligned}
	\end{equation}
	According to \eqref{e7.3.17}, \eqref{e7.3.18}, the Cauchy--Schwarz and Young inequalities, we get
	\begin{align*}
		&\ \frac{\epsilon}{2}\big(\|\Pi_{h}^{n}\|^{2}-\|\Pi_{h}^{n-1}\|^{2}\big) 
		+ \frac{\mu}{2}\big(\|\Theta_{h}^{n}\|^{2}-\|\Theta_{h}^{n-1}\|^{2}\big) 
		\\
		&\ + \frac{\lambda+G}{2}\big(\|\nabla\cdot \boldsymbol{\xi}_{h}^{n}\|^{2}-\|\nabla\cdot \boldsymbol{\xi}_{h}^{n-1}\|^{2}\big)
		+ \frac{G}{2}\big(\|\nabla \boldsymbol{\xi}_{h}^{n}\|^{2}-\|\nabla \boldsymbol{\xi}_{h}^{n-1}\|^{2}\big) 
		\\
		&\ + \frac{c_{0}}{2}\big(\|\eta_{h}^{n}\|^{2}-\|\eta_{h}^{n-1}\|^{2}\big) 
		+ \kappa\tau\|\nabla \eta_{h}^{n}\|^{2}
		\\
		\leq&\ \frac{\epsilon}{2}\tau\|\bar\partial_{t}\Xi(t_{n})\|^{2} 
		+ \frac{\epsilon}{2}\tau\|\Pi_{h}^{n}\|^{2} 
		+ \frac{\epsilon}{2}\tau\|\bm{e}_{1}(t_{n})\|^{2} 
		+ \frac{\epsilon}{2}\tau\|\Pi_{h}^{n}\|^{2} 
		+ \frac{\sigma}{2}\tau\|\Xi(t_{n})\|^{2} 
		\\
		&\ + \frac{\sigma}{2}\tau\|\Pi_{h}^{n}\|^{2} 
		+ \sigma\tau\|\Pi_{h}^{n}\|^{2} 
		+ \frac{L}{2}\tau\|\nabla\gamma(t_{n})\|^{2} 
		+ \frac{L}{2}\tau\|\Pi_{h}^{n}\|^{2} 
		+ \frac{\kappa}{2}\tau\|\nabla\eta_{h}^{n}\|^{2} 
		\\
		&\ + \frac{2L^{2}}{\kappa}\tau\|\Pi_{h}^{n}\|^{2} 
		+ \frac{\mu}{2}\tau\|\bar\partial_{t}\Lambda(t_{n})\|^{2} 
		+ \frac{\mu}{2}\tau\|\Theta_{h}^{n}\|^{2} 
		+ \frac{\mu}{2}\tau\|\bm{e}_{2}(t_{n})\|^{2} 
		+ \frac{\mu}{2}\tau\|\Theta_{h}^{n}\|^{2} 
		\\
		&\ + \frac{1}{2}\tau\|\nabla\times\Xi(t_{n})\|^{2} 
		+ \frac{1}{2}\tau\|\Theta_{h}^{n}\|^{2} 
		+ \frac{c_{0}}{2}\tau\|\bar\partial_{t}\gamma(t_{n})\|^{2} 
		+ \frac{c_{0}}{2}\tau\|\eta_{h}^{n}\|^{2} 
		\\
		&\ + \frac{c_{0}}{2}\tau\|e_{3}(t_{n})\|^{2} 
		+ \frac{c_{0}}{2}\tau\|\eta_{h}^{n}\|^{2} 
		+ \frac{\alpha}{2}\tau\|\bar\partial_{t}\boldsymbol{\rho}(t_{n})\|_{1}^{2} 
		+ \frac{\alpha}{2}\tau\|\eta_{h}^{n}\|^{2} 
		\\
		&\ + \frac{\alpha}{2}\tau\|\bm{e}_{4}(t_{n})\|_{1}^{2} 
		+ \frac{\alpha}{2}\tau\|\eta_{h}^{n}\|^{2} 
		+ \frac{L^{2}}{2\kappa}\tau\|\Xi(t_{n})\|^{2} 
		+ \frac{\kappa}{2}\tau\|\nabla \eta_{h}^{n}\|^{2}.
	\end{align*}
	Inductively for $n\geq 1$ in the above inequality, we have
	\begin{equation}\label{e7.3.19}
		\begin{aligned}
			&\ \epsilon\|\Pi_{h}^{n}\|^{2} 
			+ \mu\|\Theta_{h}^{n}\|^{2} 
			+ (\lambda+G)\|\nabla\cdot\boldsymbol{\xi}_{h}^{n}\|^{2} 
			+ G\|\nabla\boldsymbol{\xi}_{h}^{n}\|^{2} 
			+ c_{0}\|\eta_{h}^{n}\|^{2} 
			\\
			\leq&\ \epsilon\|\Pi_{h}^{0}\|^{2} 
			+ \mu\|\Theta_{h}^{0}\|^{2} 
			+ (\lambda+G)\|\nabla\cdot\boldsymbol{\xi}_{h}^{0}\|^{2} 
			+ G\|\nabla\boldsymbol{\xi}_{h}^{0}\|^{2} 
			+ c_{0}\|\eta_{h}^{0}\|^{2} 
			\\
			&\ + \epsilon\sum_{i=1}^{n}\tau\|\bar\partial_{t}\Xi(t_{i})\|^{2} 
			+ \epsilon\sum_{i=1}^{n}\tau\|\bm{e}_{1}(t_{i})\|^{2}
			+ (\sigma+\frac{L^{2}}{\kappa})\sum_{i=1}^{n}\tau\|\Xi(t_{i})\|^{2} 
			\\
			&\ + L\sum_{i=1}^{n}\tau\|\nabla \gamma(t_{i})\|^{2} 
			+ \mu\sum_{i=1}^{n}\tau\|\bar\partial_{t}\Lambda(t_{i})\|^{2} 
			+ \mu\sum_{i=1}^{n}\tau\|\bm{e}_{2}(t_{i})\|^{2} 
			\\
			&\ + \sum_{i=1}^{n}\tau\|\nabla\times \Xi(t_{i})\|^{2}
			+ c_{0}\sum_{i=1}^{n}\tau\|\bar\partial_{t}\gamma(t_{i})\|^{2} 
			+ c_{0}\sum_{i=1}^{n}\tau\|e_{3}(t_{i})\|^{2} 
			\\
			&\ + \alpha\sum_{i=1}^{n}\tau\|\bar\partial_{t}\boldsymbol{\rho}(t_{i})\|_{1}^{2} 
			+ \alpha\sum_{i=1}^{n}\tau\|\bm{e}_{4}(t_{i})\|_{1}^{2} 
			\\
			&\ + (2\epsilon+3\sigma+L+\frac{4L^{2}}{\kappa})\sum_{i=1}^{n}\tau\|\Pi_{h}^{i}\|^{2} 
			+ (2\mu+1)\sum_{i=1}^{n}\tau\|\Theta_{h}^{i}\|^{2} 
			\\
			&\ + (2c_{0}+2\alpha)\sum_{i=1}^{n}\tau\|\eta_{h}^{i}\|^{2}.
		\end{aligned}
	\end{equation}
	Noticing \eqref{e7.3.5}, and then utilizing Lemmas \ref{l7.3.1} and \ref{l7.3.2} yields
	\begin{equation}\label{e7.3.20}
		\begin{aligned}
			\|\Pi_{h}^{0}\|  
			\leq&\ \|\mathcal{I}_{h}\mathbf{E}_{0}\!-\!\mathbf{E}_{0}\| 
			+ \|\mathbf{E}_{0}\!-\!\mathfrak{P}_{h}\mathbf{E}_{0}\| 
			\leq Ch^{\min\{l,\beta\}}\|\mathbf{E}_{0}\|_{\beta+1},
			\\
			\|\Theta_{h}^{0}\| =&\ 0, 
			\\
			(\lambda+G)\|\nabla\cdot \boldsymbol{\xi}_{h}^{0}\|^{2} 
			=&\ (\lambda+G)\|\nabla\cdot (\mathcal{Q}_{h}\mathbf{u}_{0}-\mathscr{P}_{h}\mathbf{u}_{0})\|^{2}
			\\
			\leq&\ C\lambda^{2} h^{2\min\{l,\beta\}}\|\mathbf{u}_{0}\|_{\beta+1}^{2} 
			+ Ch^{2\min\{l,\beta\}}\|\mathbf{u}_{0}\|_{\beta+1}^{2} 
			\\
			&\ + Ch^{2(\min\{l,\beta\}+1)}\|p_{0}\|_{\beta+1}^{2},
			\\
			\|\nabla \boldsymbol{\xi}_{h}^{0}\| 
			\leq&\ \|\mathcal{Q}_{h}\mathbf{u}_{0}-\mathbf{u}_{0}\|_{1} 
			+ \|\mathbf{u}_{0}-\mathscr{P}_{h}\mathbf{u}_{0}\|_{1}
			\\
			\leq&\ C\lambda h^{\min\{l,\beta\}}\|\mathbf{u}_{0}\|_{\beta+1} 
			+ Ch^{\min\{l,\beta\}}\|\mathbf{u}_{0}\|_{\beta+1} 
			\\
			&\ + Ch^{\min\{l,\beta\}+1}\|p_{0}\|_{\beta+1},
			\\
			\|\eta_{h}^{0}\| \leq&\ \|\mathcal{R}_{h}p_{0}\!-\!p_{0}\|+\|p_{0}\!-\!P_{h}p_{0}\| 
			\leq Ch^{\min\{l,\beta\}+1}\|p_{0}\|_{\beta+1}.
		\end{aligned}
	\end{equation}
	where $C>0$ is a constant independent of $\tau$, $h$ and $\lambda$. 
	By the Newton-Leibniz formula, we get
	\begin{equation*}
		\bar\partial_{t}\Xi(t_{i}) = \bar\partial_{t}\mathbf{E}(t_{i})-\bar\partial_{t}\mathcal{I}_{h}\mathbf{E}(t_{i}) 
		= \frac{1}{\tau}\int_{t_{i-1}}^{t_{i}}{\big(\frac{\partial}{\partial t} \mathbf{E}(\theta)-\mathcal{I}_{h}\frac{\partial}{\partial t}\mathbf{E}(\theta)\big)}{\, \mathrm{d}\theta}.
	\end{equation*}
	Utilizing the Cauchy--Schwarz inequality and Lemma \ref{l7.3.2}, we have
	\begin{equation}\label{e7.3.21}
		\begin{aligned}
			&\ \|\bar\partial_{t}\Xi(t_{i})\|^{2} 
			\leq \frac{1}{\tau^{2}}\Big(\int_{t_{i-1}}^{t_{i}}{\|\frac{\partial}{\partial t} \mathbf{E}(\theta)-\mathcal{I}_{h}\frac{\partial}{\partial t}\mathbf{E}(\theta)\|}{\, \mathrm{d}\theta}\Big)^{2}
			\\
			\leq&\ \frac{1}{\tau}\int_{t_{i-1}}^{t_{i}}{\|\frac{\partial}{\partial t} \mathbf{E}(\theta)-\mathcal{I}_{h}\frac{\partial}{\partial t} \mathbf{E}(\theta)\|^{2}}{\, \mathrm{d}\theta}
			\\
			\leq&\ C\frac{1}{\tau} h^{2\min\{l,\beta\}} \int_{t_{i-1}}^{t_{i}}{\|\frac{\partial}{\partial t} \mathbf{E}(\theta)\|_{\beta+1}^{2}}{\, \mathrm{d}\theta}.
		\end{aligned}
	\end{equation}
	Similarly, applying Lemmas \ref{l7.3.1} and \ref{l7.3.2}, we obtain
	\begin{equation}\label{e7.3.22}
		\begin{aligned}
			\|\bar\partial_{t}\Lambda(t_{i})\|^{2} \leq&\ C\frac{1}{\tau} h^{2\min\{l,\beta\}} \int_{t_{i-1}}^{t_{i}}{\|\frac{\partial}{\partial t} \mathbf{H}(\theta)\|_{\beta}^{2}}{\, \mathrm{d}\theta},
			\\
			\|\bar\partial_{t}\gamma(t_{i})\|^{2} \leq&\ C\frac{1}{\tau} h^{2(\min\{l,\beta\}+1)} \int_{t_{i-1}}^{t_{i}}{\|\frac{\partial}{\partial t} p(\theta)\|_{\beta+1}^{2}}{\, \mathrm{d}\theta}, 
			\\
			\|\bar\partial_{t}\boldsymbol{\rho}(t_{i})\|_{1}^{2} \leq&\ C\frac{1}{\tau} \lambda^{2}h^{2\min\{l,\beta\}} \int_{t_{i-1}}^{t_{i}}{\|\frac{\partial}{\partial t} \mathbf{u}(\theta)\|_{\beta+1}^{2}}{\, \mathrm{d}\theta} 
			\\
			&\ + C\frac{1}{\tau} h^{2\min\{l,\beta\}} \int_{t_{i-1}}^{t_{i}}{\|\frac{\partial}{\partial t} \mathbf{u}(\theta)\|_{\beta+1}^{2}}{\, \mathrm{d}\theta} 
			\\
			&\ + C\frac{1}{\tau} h^{2(\min\{l,\beta\}+1)} \int_{t_{i-1}}^{t_{i}}{\|\frac{\partial}{\partial t} p(\theta)\|_{\beta+1}^{2}}{\, \mathrm{d}\theta}.
		\end{aligned}
	\end{equation}
	By virtue of the Taylor expansion, we have
	\begin{equation*}
		\bm{e}_{1}(t_{i}) = \bar\partial_{t}\mathbf{E}(t_{i})-\frac{\partial}{\partial t}\mathbf{E}(t_{i}) 
		= \frac{1}{\tau}\int_{t_{i-1}}^{t_{i}}{(t_{i-1}-\theta)\frac{\partial^{2}}{\partial t^{2}} \mathbf{E}(\theta)}{\, \mathrm{d}\theta}.
	\end{equation*}
	By the Cauchy--Schwarz inequality, we get
	\begin{equation}\label{e7.3.23}
		\|\bm{e}_{1}(t_{i})\|^{2}
		\!\leq\! \frac{1}{\tau^{2}} \Big(\int_{t_{i-1}}^{t_{j}}{(t_{i-1}-\theta)\|\frac{\partial^{2}}{\partial t^{2}} \mathbf{E}(\theta)\|}{\, \mathrm{d}\theta}\Big)^{2}
		\\
		\!\leq\! \tau\int_{t_{i-1}}^{t_{i}}{\|\frac{\partial^{2}}{\partial t^{2}} \mathbf{E}(\theta)\|^{2}}{\, \mathrm{d}\theta}.
	\end{equation}
	In a similar way, we obtain
	\begin{equation}\label{e7.3.24}
		\begin{aligned}
			& \|\bm{e}_{2}(t_{i})\|^{2}
			\leq \tau\int_{t_{i-1}}^{t_{i}}{\|\frac{\partial^{2}}{\partial t^{2}} \mathbf{H}(\theta)\|^{2}}{\, \mathrm{d}\theta},
			\quad
			\|e_{3}(t_{i})\|^{2}
			\leq \tau\int_{t_{i-1}}^{t_{i}}{\|\frac{\partial^{2}}{\partial t^{2}} p(\theta)\|^{2}}{\, \mathrm{d}\theta},
			\\
			& \|\bm{e}_{4}(t_{i})\|_{1}^{2}
			\leq \tau\int_{t_{i-1}}^{t_{i}}{\|\frac{\partial^{2}}{\partial t^{2}} \mathbf{u}(\theta)\|_{1}^{2}}{\, \mathrm{d}\theta}.
		\end{aligned}
	\end{equation}
	According to \eqref{e7.3.19}--\eqref{e7.3.24}, Lemma \ref{l7.3.2}, (H2) and (H3), we get
	\begin{align*}
		&\ \epsilon\|\Pi_{h}^{n}\|^{2} 
		+ \mu\|\Theta_{h}^{n}\|^{2} 
		+ (\lambda+G)\|\nabla\cdot\boldsymbol{\xi}_{h}^{n}\|^{2} 
		+ G\|\nabla\boldsymbol{\xi}_{h}^{n}\|^{2} 
		+ c_{0}\|\eta_{h}^{n}\|^{2} 
		\\
		\leq&\ \epsilon Ch^{2\min\{l,\beta\}}\|\mathbf{E}_{0}\|_{\beta+1}^{2} 
		+ C\lambda^{2} h^{2\min\{l,\beta\}}\|\mathbf{u}_{0}\|_{\beta+1}^{2} 
		+ Ch^{2\min\{l,\beta\}}\|\mathbf{u}_{0}\|_{\beta+1}^{2} 
		\\
		&\ + Ch^{2(\min\{l,\beta\}+1)}\|p_{0}\|_{\beta+1}^{2} 
		+ GC\lambda^{2} h^{2\min\{l,\beta\}}\|\mathbf{u}_{0}\|_{\beta+1}^{2} 
		\\
		&\ + GCh^{2\min\{l,\beta\}}\|\mathbf{u}_{0}\|_{\beta+1}^{2} 
		+ GCh^{2(\min\{l,\beta\}+1)}\|p_{0}\|_{\beta+1}^{2} 
		\\
		&\ + c_{0}Ch^{2(\min\{l,\beta\}+1)}\|p_{0}\|_{\beta+1}^{2} 
		+\epsilon Ch^{2\min\{l,\beta\}} \int_{0}^{t_{n}}{\|\frac{\partial}{\partial t} \mathbf{E}(\theta)\|_{\beta+1}^{2}}{\, \mathrm{d}\theta} 
		\\
		&\ + \epsilon \tau^{2}\int_{0}^{t_{n}}{\|\frac{\partial^{2}}{\partial t^{2}} \mathbf{E}(\theta)\|^{2}}{\, \mathrm{d}\theta} 
		+ (\sigma+\frac{L^{2}}{\kappa})Ch^{2\min\{l,\beta\}}\sum_{i=1}^{n}\tau\|\mathbf{E}(t_{i})\|_{\beta+1}^{2} 
		\\
		&\ + LCh^{2\min\{l,\beta\}}\sum_{i=1}^{n}\tau\|p(t_{i})\|_{\beta+1}^{2} 
		+ \mu Ch^{2\min\{l,\beta\}} \int_{0}^{t_{n}}{\|\frac{\partial}{\partial t} \mathbf{H}(\theta)\|_{\beta}^{2}}{\, \mathrm{d}\theta} 
		\\
		&\ + \mu \tau^{2}\int_{0}^{t_{n}}{\|\frac{\partial^{2}}{\partial t^{2}} \mathbf{H}(\theta)\|^{2}}{\, \mathrm{d}\theta} 
		+ Ch^{2\min\{l,\beta\}}\sum_{i=1}^{n}\tau\|\mathbf{E}(t_{i})\|_{\beta+1}^{2} 
		\\
		&\ + c_{0}Ch^{2(\min\{l,\beta\}+1)} \int_{0}^{t_{n}}{\|\frac{\partial}{\partial t} p(\theta)\|_{\beta+1}^{2}}{\, \mathrm{d}\theta} 
		+ c_{0}\tau^{2}\int_{0}^{t_{n}}{\|\frac{\partial^{2}}{\partial t^{2}} p(\theta)\|^{2}}{\, \mathrm{d}\theta} 
		\\
		&\ + \alpha C\lambda^{2}h^{2\min\{l,\beta\}} \int_{0}^{t_{n}}{\|\frac{\partial}{\partial t} \mathbf{u}(\theta)\|_{\beta+1}^{2}}{\, \mathrm{d}\theta} 
		\\
		&\ + \alpha Ch^{2\min\{l,\beta\}} \int_{0}^{t_{n}}{\|\frac{\partial}{\partial t} \mathbf{u}(\theta)\|_{\beta+1}^{2}}{\, \mathrm{d}\theta} 
		\\
		&\ + \alpha Ch^{2(\min\{l,\beta\}+1)} \int_{0}^{t_{n}}{\|\frac{\partial}{\partial t} p(\theta)\|_{\beta+1}^{2}}{\, \mathrm{d}\theta} 
		+ \alpha \tau^{2}\int_{0}^{t_{n}}{\|\frac{\partial^{2}}{\partial t^{2}} \mathbf{u}(\theta)\|_{1}^{2}}{\, \mathrm{d}\theta}
		\\
		&\ + (2\epsilon+3\sigma+L+\frac{4L^{2}}{\kappa})\sum_{i=1}^{n}\tau\|\Pi_{h}^{i}\|^{2} 
		+ (2\mu+1)\sum_{i=1}^{n}\tau\|\Theta_{h}^{i}\|^{2} 
		\\
		&\ + (2c_{0}+2\alpha)\sum_{i=1}^{n}\tau\|\eta_{h}^{i}\|^{2} 
		\\
		\leq&\ C(\tau^{2}+\lambda^{2}h^{2\min\{l,\beta\}}+h^{2\min\{l,\beta\}}) 
		+ C\sum_{i=1}^{n}\tau(\|\Pi_{h}^{i}\|^{2}+\|\Theta_{h}^{i}\|^{2}+\|\eta_{h}^{i}\|^{2}),
	\end{align*}
	where $C>0$ is a constant independent of $\tau$, $h$ and $\lambda$. 
	Applying the discrete Gr\"{o}nwall lemma to the above inequality, we obtain
	\begin{align*}
		&\ \|\Pi_{h}^{n}\|^{2} 
		+ \|\Theta_{h}^{n}\|^{2} 
		+ \|\nabla\cdot\boldsymbol{\xi}_{h}^{n}\|^{2} 
		+ \|\nabla\boldsymbol{\xi}_{h}^{n}\|^{2} 
		+ \|\eta_{h}^{n}\|^{2} 
		\\
		\leq&\ C (\tau^{2}+\lambda^{2}h^{2\min\{l,\beta\}}+h^{2\min\{l,\beta\}}) e^{C\sum_{i=1}^{n}\tau} 
		\\
		\leq&\ C (\tau^{2}+\lambda^{2}h^{2\min\{l,\beta\}}+h^{2\min\{l,\beta\}}),
	\end{align*}
	which together with the Poincar\'{e} inequality leads to
	\begin{equation*}
		\|\Pi_{h}^{n}\|^{2} 
		+ \|\Theta_{h}^{n}\|^{2} 
		+ \|\boldsymbol{\xi}_{h}^{n}\|_{1}^{2}
		+ \|\eta_{h}^{n}\|^{2} 
		\leq C (\tau^{2}+\lambda^{2}h^{2\min\{l,\beta\}}+h^{2\min\{l,\beta\}}).
	\end{equation*}
	Then, combining with \eqref{e7.3.12}, we complete the proof.
\end{proof}

\begin{remark}\label{r7.3.2}
	This theorem indicates that the errors become large as $\lambda\to\infty$, hence the spatial convergence order deteriorates. This is the reason that locking effect occurs to conforming finite element approximations.
\end{remark}

\section{High-order locking-free FEM}\label{s7.4}
In this section, we are devoted to constructing a high-order locking-free finite element approximation and deriving uniform error estimates with respect to $\lambda$. We first introduce a five-field model and prove uniform boundedness of solutions with respect to $\lambda$. Then, we employ the backward Euler method and FEM with N\'{e}d\'{e}lec \cite{Ne1980}, Lagrange and Taylor--Hood \cite{GiRa1986} elements to construct a fully discretized scheme. Finally, we derive uniform error estimates with respect to $\lambda$, indicating the locking-free property.

\subsection{Five-field model and uniform a priori estimates}\label{s7.4.1}
In this part, we convert \eqref{e7.2.1} to a five-field model such that we can propose a locking-free numerical method, and then study uniform boundedness of solutions with respect to $\lambda$. 
Define a new unknown $\varphi = \alpha p - \lambda\nabla\cdot\mathbf{u}$. 
Then, \eqref{e7.2.1} can be rewritten as a five-field model
\begin{equation}\label{e7.4.1}
	\begin{aligned}
		& \epsilon\frac{\partial}{\partial t}\mathbf{E} + \sigma \mathbf{E} - \nabla\times \mathbf{H} - L\nabla p = \mathbf{j},
		\\
		& \mu\frac{\partial}{\partial t}\mathbf{H} + \nabla\times \mathbf{E} = \mathbf{0},
		\\
		& -G\nabla(\nabla\cdot \mathbf{u}) - G\Delta \mathbf{u} + \nabla \varphi = \mathbf{0},
		\\
		& \frac{1}{\lambda}\varphi - \frac{\alpha}{\lambda} p + \nabla\cdot\mathbf{u} = 0,
		\\
		& (c_{0}+\frac{\alpha^{2}}{\lambda})\frac{\partial}{\partial t}p 
		- \frac{\alpha}{\lambda}\frac{\partial}{\partial t}\varphi 
		- \kappa\Delta p + L\nabla\cdot \mathbf{E} = g.
	\end{aligned}
\end{equation}
Moreover, the initial value of $\varphi$ is
\begin{equation}\label{e7.4.2}
	\varphi_{0}  
	= \alpha p_{0} - \lambda\nabla\cdot\mathbf{u}_{0}.
\end{equation}

The variational equation of five-field model \eqref{e7.4.1}, \eqref{e7.2.2}, \eqref{e7.4.2} and \eqref{e7.2.3} is to find $(\mathbf{E}(t),\mathbf{H}(t),\mathbf{u}(t),\varphi(t),p(t))\in \bm{H}_{0}(\mathbf{curl}) \times \bm{L}^{2}(\mathscr{D}) \times \bm{H}^{1}_{0} \times L^{2}(\mathscr{D}) \times H^{1}_{0}$ satisfying the initial conditions \eqref{e7.2.2} and \eqref{e7.4.2} such that
\begin{equation}\label{e7.4.3}
	\begin{aligned}
		& \big(\epsilon\frac{\partial}{\partial t}\mathbf{E}(t), \mathbf{D}\big) 
		+ \big(\sigma \mathbf{E}(t), \mathbf{D}\big) 
		- \big(\mathbf{H}(t), \nabla\!\times\! \mathbf{D}\big) 
		- \big(L\nabla p(t), \mathbf{D}\big) 
		= \big(\mathbf{j}(t), \mathbf{D}\big),
		\\
		& \big(\mu\frac{\partial}{\partial t}\mathbf{H}(t), \mathbf{B}\big) 
		+ \big(\nabla\times \mathbf{E}(t), \mathbf{B}\big) = 0,
		\\
		& \big(G\nabla\cdot \mathbf{u}(t), \nabla\cdot \mathbf{v}\big) 
		+ \big(G\nabla \mathbf{u}(t), \nabla \mathbf{v}\big) 
		- \big(\varphi(t), \nabla\cdot \mathbf{v}\big) = 0,
		\\
		& \big(\frac{1}{\lambda}\varphi(t), \psi\big) 
		- \big(\frac{\alpha}{\lambda} p(t), \psi\big) 
		+ \big(\nabla\cdot \mathbf{u}(t), \psi\big) 
		= 0,
		\\
		& \big((c_{0}+\frac{\alpha^{2}}{\lambda})\frac{\partial}{\partial t}p(t), q\big) 
		- \big(\frac{\alpha}{\lambda}\frac{\partial}{\partial t}\varphi(t), q\big) 
		+ \big(\kappa\nabla p(t), \nabla q\big) 
		- \big(L\mathbf{E}(t), \nabla q\big) 
		\\
		&\ \qquad\qquad = \big(g(t), q\big),
	\end{aligned}
\end{equation}
for every $t\in (0,T]$ and all $(\mathbf{D},\mathbf{B},\mathbf{v},\psi,q)\in \bm{H}_{0}(\mathbf{curl}) \times \bm{L}^{2}(\mathscr{D}) \times \bm{H}^{1}_{0} \times L^{2}(\mathscr{D}) \times H^{1}_{0}$.

The well-posedness of \eqref{e7.2.5} implies that \eqref{e7.4.3} has a unique solution. 
Next, we study uniform a priori estimates of the solution to \eqref{e7.4.3} with respect to $\lambda$. Define two bilinear forms
\begin{align*}
	\bar{a}(\mathbf{w},\mathbf{v}) =&\ (G\nabla\cdot \mathbf{w}, \nabla\cdot \mathbf{v}) + (G\nabla \mathbf{w}, \nabla \mathbf{v}), \quad \forall \, \mathbf{w},\mathbf{v}\in \bm{H}^{1}_{0},
	\\
	\bar{b}(\mathbf{v},\psi) =&\ (\psi, \nabla\cdot \mathbf{v}), \quad \forall \, \mathbf{v}\in \bm{H}^{1}_{0}, \, \psi\in L^{2}(\mathscr{D}).
\end{align*}
Then, $\bar{a}(\cdot,\cdot)$ is symmetric, coercive and bounded, which induces an equivalent norm $\|\cdot\|_{\bar{a}}=\sqrt{\bar{a}(\cdot,\cdot)}$ in $\bm{H}^{1}_{0}$, i.e., there exist two constants $0<c_{1}<c_{2}$ such that $c_{1}\|\cdot\|_{1}\leq \|\cdot\|_{\bar{a}}\leq c_{2}\|\cdot\|_{1}$. 
The bilinear form $\bar{b}(\cdot,\cdot)$ satisfies the continuous inf-sup condition \cite{GiRa1986,OyRu2016} 
\begin{equation}\label{e7.4.4}
	\sup_{\mathbf{0}\neq \mathbf{v}\in \bm{H}^{1}_{0}}\frac{\bar{b}(\mathbf{v},\psi)}{\|\mathbf{v}\|_{1}} \geq \delta\|\psi\|, \quad \forall \, \psi\in L^{2}(\mathscr{D}),
\end{equation}
with a constant $\delta>0$ only depending on $\mathscr{D}$.

\begin{lemma}\label{l7.4.1}
	Assume that (H1) and (H2) hold. Let $(\mathbf{E},\mathbf{H},\mathbf{u},\varphi,p)$ be the solution of \eqref{e7.4.3}. Then there exists a constant $C>0$ independent of $\lambda$ such that
	\begin{equation*}
		\|\mathbf{E}(t)\|^{2} + \|\mathbf{H}(t)\|^{2} + \|\mathbf{u}(t)\|_{1}^{2} + \|\varphi(t)\|^{2}+ \|p(t)\|^{2}  \leq C, \quad \forall \, t\in (0,T].
	\end{equation*}
\end{lemma}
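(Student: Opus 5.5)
Since \eqref{e7.4.3} is equivalent to \eqref{e7.2.5} with $\varphi=\alpha p-\lambda\nabla\cdot\mathbf{u}$, the quickest route is to reuse the $\lambda$-uniform bound for \eqref{e7.2.5} quoted in Section \ref{s7.2} (from \cite[Lemma 3.1]{LiZoZhCaMe2025-2}). That bound already gives $\epsilon\|\mathbf{E}(t)\|^{2}+\mu\|\mathbf{H}(t)\|^{2}+c_{0}\|p(t)\|^{2}+(\lambda+G)\|\nabla\cdot\mathbf{u}(t)\|^{2}+G\|\nabla\mathbf{u}(t)\|^{2}\leq C$ with $C$ independent of $\lambda$. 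Combined with the Poincar\'{e} inequality, it controls all the terms except $\|\varphi(t)\|$. The only new work is therefore a $\lambda$-independent bound on $\varphi$.

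The crude estimate $\|\varphi\|\leq\alpha\|p\|+\lambda\|\nabla\cdot\mathbf{u}\|\leq C+C\sqrt{\lambda}$ is not uniform, so I will not use it. Instead I will use the third equation of \eqref{e7.4.3} together with the inf-sup condition \eqref{e7.4.4}. For any $\mathbf{v}\in\bm{H}^{1}_{0}$, that equation gives $\bar b(\mathbf{v},\varphi(t))=\bar a(\mathbf{u}(t),\mathbf{v})$. Hence
\begin{equation*}
	\delta\|\varphi(t)\|\leq\sup_{\mathbf{0}\neq\mathbf{v}\in\bm{H}^{1}_{0}}\frac{\bar a(\mathbf{u}(t),\mathbf{v})}{\|\mathbf{v}\|_{1}}\leq C\|\mathbf{u}(t)\|_{1}\leq C,
\end{equation*}
using the boundedness of $\bar a$ (constant $\sim G$, independent of $\lambda$). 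Since $\delta$ depends only on $\mathscr{D}$, this bound on $\|\varphi(t)\|$ is uniform in $\lambda$.

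For a self-contained argument that does not rely on the cited lemma, I would instead derive the energy estimate directly on \eqref{e7.4.3}. I would differentiate the fourth equation in time and test the five equations with $\mathbf{E}$, $\mathbf{H}$, $\partial_{t}\mathbf{u}$, $\varphi$ and $p$ respectively. The coupling terms $(\varphi,\nabla\cdot\partial_t\mathbf{u})$ and the $\frac{\alpha}{\lambda}$ cross terms then cancel, producing the energy
\begin{equation*}
	\tfrac12\tfrac{d}{dt}\Big(\epsilon\|\mathbf{E}\|^{2}+\mu\|\mathbf{H}\|^{2}+\|\mathbf{u}\|_{\bar a}^{2}+c_{0}\|p\|^{2}+\tfrac1\lambda\|\varphi-\alpha p\|^{2}\Big)+\sigma\|\mathbf{E}\|^{2}+\kappa\|\nabla p\|^{2}=2(L\mathbf{E},\nabla p)+(\mathbf{j},\mathbf{E})+(g,p).
\end{equation*}
Hypothesis (H1) absorbs the $L$ term, since $2L|(\mathbf{E},\nabla p)|\leq\sigma\|\mathbf{E}\|^{2}+\frac{L^{2}}{\sigma}\|\nabla p\|^{2}$ and $L^{2}/\sigma<\kappa$. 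Gr\"onwall then closes the estimate. The initial energy is uniformly bounded because $\frac1\lambda\|\varphi_{0}-\alpha p_{0}\|^{2}=\lambda\|\nabla\cdot\mathbf{u}_{0}\|^{2}$. This requires (H2), read so that $\sqrt\lambda\|\nabla\cdot\mathbf{u}_{0}\|$ is bounded; this is also implicit in the cited lemma.

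The main obstacle is exactly this uniform control of the initial datum $\lambda\|\nabla\cdot\mathbf{u}_0\|^2$, together with the $\varphi$ bound. The inf-sup step resolves the latter cleanly, and for the former I would rely on the cited uniform estimate, which already incorporates the $(\lambda+G)\|\nabla\cdot\mathbf{u}\|^{2}$ term. The final statement follows by adding the bounds.
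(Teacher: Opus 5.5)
Your main argument is correct and shorter than the paper's. You import the $\lambda$-uniform four-field energy bound quoted in Section \ref{s7.2}. This is legitimate: the fourth equation of \eqref{e7.4.3} holds for every $\psi\in L^{2}(\mathscr{D})$, so it forces $\varphi=\alpha p-\lambda\nabla\cdot\mathbf{u}$, and substituting back turns \eqref{e7.4.3} into exactly \eqref{e7.2.5}. You then bound $\varphi$ through the inf-sup condition \eqref{e7.4.4}, which is word for word the paper's final step.

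The paper does not cite that bound. Instead it re-derives the energy estimate on the five-field system itself, which is essentially your ``self-contained'' variant. The one difference is that it handles the coupling term by $2L(\mathbf{E},\nabla p)\le \frac{L^{2}}{\kappa}\|\mathbf{E}\|^{2}+\kappa\|\nabla p\|^{2}$ followed by Gr\"{o}nwall, so (H1) plays no role there. The direct derivation keeps the lemma independent of the external reference. As a by-product it also yields the bound \eqref{e7.4.8} on $\frac{1}{\lambda}\|\varphi_{0}\|^{2}$, which is reused in the proof of Theorem \ref{t7.4.1} to control the discrete initial datum $\varphi_{h}^{0}=\mathcal{G}_{h}\varphi_{0}$.

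You flagged uniform control of $\lambda\|\nabla\cdot\mathbf{u}_{0}\|^{2}$ as the main obstacle and proposed reading it into (H2). No extra assumption is needed; it follows from the compatibility condition stated after \eqref{e7.2.2}, namely that $\mathbf{u}_{0}$ and $p_{0}$ satisfy the third equation of \eqref{e7.2.1}. This is how the paper obtains \eqref{e7.4.8}. Testing that equation (with $\mathbf{f}=\mathbf{0}$) by $\mathbf{u}_{0}$ gives $(\lambda+G)\|\nabla\cdot\mathbf{u}_{0}\|^{2}+G\|\nabla\mathbf{u}_{0}\|^{2}=\alpha(p_{0},\nabla\cdot\mathbf{u}_{0})$. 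Hence $\|\nabla\cdot\mathbf{u}_{0}\|\le\alpha\|p_{0}\|/(\lambda+G)$, so $\lambda\|\nabla\cdot\mathbf{u}_{0}\|^{2}\le\alpha^{2}\|p_{0}\|^{2}$, and $\|\mathbf{u}_{0}\|_{1}$ is uniformly bounded as well. With this, your self-contained version closes without appealing to the cited lemma at all.
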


\begin{proof}
	Differentiating the fourth equation of \eqref{e7.4.3} with respect to $t$, we have
	\begin{equation}\label{e7.4.5}
		\big(\frac{1}{\lambda}\frac{\partial}{\partial t}\varphi(t), \psi\big) 
		- \big(\frac{\alpha}{\lambda}\frac{\partial}{\partial t} p(t), \psi\big) 
		+ \bar{b}\big(\frac{\partial}{\partial t}\mathbf{u}(t), \psi\big) 
		= 0.
	\end{equation}
	Taking $(\mathbf{D},\mathbf{B},\mathbf{v},q)=\big(\mathbf{E}(t),\mathbf{H}(t),\frac{\partial}{\partial t}\mathbf{u}(t),p(t)\big)$ in the first three and last equations of \eqref{e7.4.3}, and $\psi=\varphi(t)$ in \eqref{e7.4.5}, respectively, then summing these up and applying the Cauchy--Schwarz and Young inequalities, we get
	\begin{equation}\label{e7.4.6}
		\begin{aligned}
			&\ \epsilon\frac{1}{2}\frac{\mathrm{d}}{\mathrm{d}t}\|\mathbf{E}(t)\|^{2} 
			+ \mu\frac{1}{2}\frac{\mathrm{d}}{\mathrm{d}t}\|\mathbf{H}(t)\|^{2} 
			+ \frac{1}{2}\frac{\mathrm{d}}{\mathrm{d}t}\|\mathbf{u}(t)\|_{\bar{a}}^{2}
			+ c_{0}\frac{1}{2}\frac{\mathrm{d}}{\mathrm{d}t}\|p(t)\|^{2} 
			\\
			&\ + \frac{1}{\lambda}\frac{1}{2}\frac{\mathrm{d}}{\mathrm{d}t}\|\alpha p(t)-\varphi(t)\|^{2}
			+ \sigma\|\mathbf{E}(t)\|^{2} + \kappa\|\nabla p(t)\|^{2}
			\\
			=&\ 2(L\mathbf{E}(t), \nabla p(t)) + (\mathbf{j}(t), \mathbf{E}(t)) + (g(t), p(t)) 
			\\
			\leq&\ \frac{L^{2}}{\kappa}\|\mathbf{E}(t)\|^{2} + \kappa\|\nabla p(t)\|^{2} 
			+ \frac{1}{4\sigma}\|\mathbf{j}(t)\|^{2} + \sigma\|\mathbf{E}(t)\|^{2} 
			\\
			&\ + \frac{1}{2}\|g(t)\|^{2} + \frac{1}{2}\|p(t)\|^{2}.
		\end{aligned}
	\end{equation}
	For any $t\in (0,T]$, integrating \eqref{e7.4.6} over $[0,t]$, we obtain
	\begin{equation}\label{e7.4.7}
		\begin{aligned}
			&\ \epsilon\|\mathbf{E}(t)\|^{2} + \mu\|\mathbf{H}(t)\|^{2} + c_{1}^{2}\|\mathbf{u}(t)\|_{1}^{2} 
			+ c_{0}\|p(t)\|^{2} 
			\\
			\leq&\ \epsilon\|\mathbf{E}(t)\|^{2} + \mu\|\mathbf{H}(t)\|^{2} + \|\mathbf{u}(t)\|_{\bar{a}}^{2} 
			+ c_{0}\|p(t)\|^{2} + \frac{1}{\lambda}\|\alpha p(t)-\varphi(t)\|^{2}
			\\
			\leq&\ \epsilon\|\mathbf{E}_{0}\|^{2} + \mu\|\mathbf{H}_{0}\|^{2} + \|\mathbf{u}_{0}\|_{\bar{a}}^{2} 
			+ c_{0}\|p_{0}\|^{2} + \frac{1}{\lambda}\|\alpha p_{0}-\varphi_{0}\|^{2}
			\\
			&\ + \frac{1}{2\sigma}\int_{0}^{t}{\|\mathbf{j}(\theta)\|^{2}}{\, \mathrm{d}\theta} 
			+ \int_{0}^{t}{\|g(\theta)\|^{2}}{\, \mathrm{d}\theta} 
			\\
			&\ + \frac{2L^{2}}{\kappa}\int_{0}^{t}{\|\mathbf{E}(\theta)\|^{2}}{\, \mathrm{d}\theta}
			+ \int_{0}^{t}{\|p(\theta)\|^{2}}{\, \mathrm{d}\theta}
			\\
			\leq&\ \epsilon \|\mathbf{E}_{0}\|^{2} + \mu \|\mathbf{H}_{0}\|^{2} + c_{2}^{2}\|\mathbf{u}_{0}\|_{1}^{2} 
			+ c_{0} \|p_{0}\|^{2} + \frac{2\alpha^{2}}{\lambda}\|p_{0}\|^{2} + \frac{2}{\lambda}\|\varphi_{0}\|^{2}
			\\
			&\ + \frac{1}{2\sigma}\int_{0}^{t}{\|\mathbf{j}(\theta)\|^{2}}{\, \mathrm{d}\theta} 
			+ \int_{0}^{t}{\|g(\theta)\|^{2}}{\, \mathrm{d}\theta} 
			\\
			&\ + \frac{2L^{2}}{\kappa}\int_{0}^{t}{\|\mathbf{E}(\theta)\|^{2}}{\, \mathrm{d}\theta}
			+ \int_{0}^{t}{\|p(\theta)\|^{2}}{\, \mathrm{d}\theta}.
		\end{aligned}
	\end{equation}
	Since $\mathbf{u}_{0}$ and $p_{0}$ satisfy the third equation of \eqref{e7.2.1}, we get
	\begin{equation*}
		(\lambda+G)\|\nabla\cdot\mathbf{u}_{0}\|^{2} + G\|\nabla\mathbf{u}_{0}\|^{2} 
		= (p_{0}, \alpha\nabla\cdot\mathbf{u}_{0}) 
		\leq \frac{\alpha}{2}\|p_{0}\|^{2} + \frac{\alpha}{2}\|\mathbf{u}_{0}\|_{1}^{2},
	\end{equation*}
	which together with \eqref{e7.4.2} leads to
	\begin{equation}\label{e7.4.8}
		\frac{1}{\lambda}\|\varphi_{0}\|^{2} 
		\leq \frac{2\alpha^{2}}{\lambda}\|p_{0}\|^{2} + 2\lambda\|\nabla\cdot\mathbf{u}_{0}\|^{2} 
		\leq \frac{2\alpha^{2}}{\lambda}\|p_{0}\|^{2} + \alpha\|p_{0}\|^{2} + \alpha\|\mathbf{u}_{0}\|_{1}^{2}.
	\end{equation}
	Noticing (H2) and applying the Gr\"{o}nwall lemma to \eqref{e7.4.7}, we obtain
	\begin{equation*}
		\|\mathbf{E}(t)\|^{2} + \|\mathbf{H}(t)\|^{2} 
		+ \|\mathbf{u}(t)\|_{1}^{2} + \|p(t)\|^{2}
		\leq C,
	\end{equation*}
	where $C>0$ is a constant independent of $\lambda$.
	By \eqref{e7.4.4} and the third equation of \eqref{e7.4.3}, we have
	\begin{align*}
		\delta\|\varphi(t)\| \leq&\ \sup_{\mathbf{0}\neq \mathbf{v}\in \bm{H}^{1}_{0}}\frac{\bar{b}(\mathbf{v},\varphi(t))}{\|\mathbf{v}\|_{1}} 
		= \sup_{\mathbf{0}\neq \mathbf{v}\in \bm{H}^{1}_{0}}\frac{\bar{a}(\mathbf{u}(t),\mathbf{v})}{\|\mathbf{v}\|_{1}} 
		\\
		\leq&\ \sup_{\mathbf{0}\neq \mathbf{v}\in \bm{H}^{1}_{0}}\frac{C\|\mathbf{u}(t)\|_{1}\|\mathbf{v}\|_{1}}{\|\mathbf{v}\|_{1}} 
		= C\|\mathbf{u}(t)\|_{1} \leq C,
	\end{align*}
	which completes the proof.
\end{proof}

\subsection{FEM to five-field model and uniform a priori estimates}\label{s7.4.2}
From now on, we always require integer $l\geq 2$. Construct a finite element space
\begin{equation*}
	\mathbb{G}_{h} = \{\varphi_{h}\in L^{2}(\mathscr{D}): \varphi_{h}|_{K}\in P_{l-1}(K), \ \forall\, K\in \mathcal{T}_{h}\}.
\end{equation*}
Let $\mathcal{G}_{h}: L^{2}(\mathscr{D})\to \mathbb{G}_{h}$ be $L^{2}$-orthogonal projection operator. 
Notice that the pair of spaces $(\mathbb{U}_{h},\mathbb{G}_{h})$ is composed of Taylor--Hood elements and satisfies the discrete inf-sup condition \cite{GiRa1986,OyRu2016}
\begin{equation}\label{e7.4.9}
	\sup_{\mathbf{0}\neq \mathbf{v}_{h}\in \mathbb{U}_{h}}\frac{\bar{b}(\mathbf{v}_{h},\psi_{h})}{\|\mathbf{v}_{h}\|_{1}} \geq \delta_{0}\|\psi_{h}\|, \quad \forall \, \psi_{h}\in \mathbb{G}_{h},
\end{equation}
with a constant $\delta_{0}>0$ independent of $h$.

\begin{lemma}\label{l7.4.2}
	(\cite[Lemma 11.18]{ErGu2021}) For any integer $\beta\geq 1$, there exists a constant $C>0$ independent of $h$ such that for any $\varphi\in H^{\beta}$,
	\begin{equation*}
		\|\varphi-\mathcal{G}_{h}\varphi\| \leq Ch^{\min\{l,\beta\}}\|\varphi\|_{\beta}.
	\end{equation*}
\end{lemma}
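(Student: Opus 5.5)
The statement is the standard approximation property of the $L^{2}$-orthogonal projection onto the discontinuous piecewise polynomial space $\mathbb{G}_{h}$, and I would prove it elementwise. Since $\mathbb{G}_{h}$ imposes no continuity across faces, $\mathcal{G}_{h}$ decouples. On each $K\in\mathcal{T}_{h}$, $(\mathcal{G}_{h}\varphi)|_{K}$ is the $L^{2}(K)$-orthogonal projection of $\varphi|_{K}$ onto $P_{l-1}(K)$. Hence
\begin{equation*}
\|\varphi-\mathcal{G}_{h}\varphi\|^{2}=\sum_{K\in\mathcal{T}_{h}}\|\varphi-\mathcal{G}_{h}\varphi\|_{L^{2}(K)}^{2},
\end{equation*}
and on each element the projection is the best approximation: $\|\varphi-\mathcal{G}_{h}\varphi\|_{L^{2}(K)}\le\|\varphi-\chi\|_{L^{2}(K)}$ for every $\chi\in P_{l-1}(K)$.

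Set $m=\min\{l,\beta\}$. Then $m-1\le l-1$, so $P_{m-1}(K)\subset P_{l-1}(K)$, and $\varphi\in H^{\beta}(K)\subset H^{m}(K)$. Take $\chi$ to be a local polynomial approximant of degree $m-1$, such as the averaged Taylor polynomial. The Bramble--Hilbert lemma on the reference element, combined with the affine scaling argument, gives
\begin{equation*}
\|\varphi-\chi\|_{L^{2}(K)}\le Ch_{K}^{m}|\varphi|_{H^{m}(K)}.
\end{equation*}
The constant $C$ depends only on $m$, $l$ and the shape-regularity constant of $\mathcal{T}_{h}$, which is assumed via \cite[A1--A4]{WaYe2014}. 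Two points need care here. Shape regularity is what keeps $C$ uniform in $K$ after the pullback to the reference element. Also, no trace or continuity is needed, because the local spaces are unconstrained.

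Squaring, summing over $K$, and using $h_{K}\le h$ gives
\begin{equation*}
\|\varphi-\mathcal{G}_{h}\varphi\|^{2}\le Ch^{2m}\sum_{K}|\varphi|_{H^{m}(K)}^{2}\le Ch^{2m}\|\varphi\|_{\beta}^{2},
\end{equation*}
which is the claim. I do not expect a real obstacle here. The only subtle point is the case $\beta>l$: the rate then saturates at $h^{l}$ because the polynomial degree is $l-1$. This is handled by using $m=\min\{l,\beta\}$ and the bound $|\varphi|_{m}\le\|\varphi\|_{\beta}$. Equivalently, the whole statement is a direct instance of \cite[Lemma 11.18]{ErGu2021} applied to the broken space $P_{l-1}$.
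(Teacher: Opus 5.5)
Your proposal is correct and follows the paper's route: the paper offers nothing beyond citing \cite[Lemma 11.18]{ErGu2021}, and your argument is the standard proof of that cited result. That argument is the elementwise decoupling of $\mathcal{G}_{h}$, local best approximation in $P_{l-1}(K)\supset P_{m-1}(K)$ with $m=\min\{l,\beta\}$, then Bramble--Hilbert with affine scaling and summation over elements. One minor aside: if you pull back to the reference element, this particular $L^{2}$ estimate only uses $\|B_{K}\|\le Ch_{K}$ for the affine map. So shape regularity is sufficient for a uniform $C$ but not actually necessary; it is needed only if you apply the averaged Taylor polynomial directly on $K$.
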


Define a Ritz projection operator $\mathscr{Q}_{h}:\bm{H}^{1}_{0}\to \mathbb{U}_{h}$ satisfying for any $\mathbf{u}\in \bm{H}^{1}_{0}$,
\begin{equation}\label{e7.4.10}
	\bar{a}(\mathscr{Q}_{h}\mathbf{u},\mathbf{v}_{h})  
	= \bar{a}(\mathbf{u},\mathbf{v}_{h}), \quad \forall \, \mathbf{v}_{h}\in \mathbb{U}_{h}.
\end{equation}
The coercivity and boundedness of $\bar{a}(\cdot,\cdot)$ implies the following lemma.

\begin{lemma}\label{l7.4.3}
	For any integer $\beta\geq 1$, there exists a constant $C>0$ independent of $h$ such that for any $\mathbf{u}\in \bm{H}^{\beta+1}\cap \bm{H}^{1}_{0}$,
	\begin{equation*}
		\|\mathscr{Q}_{h}\mathbf{u}-\mathbf{u}\|_{1} \leq Ch^{\min\{l,\beta\}}\|\mathbf{u}\|_{\beta+1}.
	\end{equation*}
\end{lemma}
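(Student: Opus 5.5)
The estimate will follow from Céa's lemma for the $\lambda$-independent form $\bar{a}(\cdot,\cdot)$, combined with the $L^{2}$-projection estimate of Lemma \ref{l7.3.1}.

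First, subtracting \eqref{e7.4.10} from the trivial identity $\bar{a}(\mathbf{u},\mathbf{v}_{h})=\bar{a}(\mathbf{u},\mathbf{v}_{h})$ gives Galerkin orthogonality:
\begin{equation*}
	\bar{a}(\mathbf{u}-\mathscr{Q}_{h}\mathbf{u},\mathbf{v}_{h})=0, \quad \forall\,\mathbf{v}_{h}\in \mathbb{U}_{h}.
\end{equation*}

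Next, fix any $\mathbf{w}_{h}\in\mathbb{U}_{h}$. Since $\mathscr{Q}_{h}\mathbf{u}-\mathbf{w}_{h}\in\mathbb{U}_{h}$, orthogonality yields
\begin{equation*}
	\|\mathbf{u}-\mathscr{Q}_{h}\mathbf{u}\|_{\bar{a}}^{2}
	=\bar{a}(\mathbf{u}-\mathscr{Q}_{h}\mathbf{u},\mathbf{u}-\mathbf{w}_{h}).
\end{equation*}
By the Cauchy--Schwarz inequality in the $\bar{a}$-inner product, this gives $\|\mathbf{u}-\mathscr{Q}_{h}\mathbf{u}\|_{\bar{a}}\leq\|\mathbf{u}-\mathbf{w}_{h}\|_{\bar{a}}$. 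The norm equivalence $c_{1}\|\cdot\|_{1}\leq\|\cdot\|_{\bar{a}}\leq c_{2}\|\cdot\|_{1}$ then turns this into
\begin{equation*}
	\|\mathbf{u}-\mathscr{Q}_{h}\mathbf{u}\|_{1}\leq \frac{c_{2}}{c_{1}}\inf_{\mathbf{w}_{h}\in\mathbb{U}_{h}}\|\mathbf{u}-\mathbf{w}_{h}\|_{1}.
\end{equation*}
The constants $c_{1},c_{2}$ depend only on $G$ and the Poincaré constant, so they are independent of both $h$ and $\lambda$.

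Finally, take $\mathbf{w}_{h}=\mathscr{P}_{h}\mathbf{u}$. The third estimate of Lemma \ref{l7.3.1} gives $h\|\mathscr{P}_{h}\mathbf{u}-\mathbf{u}\|_{1}\leq Ch^{\min\{l,\beta\}+1}\|\mathbf{u}\|_{\beta+1}$, that is, $\|\mathscr{P}_{h}\mathbf{u}-\mathbf{u}\|_{1}\leq Ch^{\min\{l,\beta\}}\|\mathbf{u}\|_{\beta+1}$. Combining this with the quasi-optimality bound proves the lemma.

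I do not expect a genuine obstacle. The only point needing care is that the best-approximation estimate be taken in the $H^{1}$-norm. The paper's Lemma \ref{l7.3.1} already supplies this for $\mathscr{P}_{h}$ on shape-regular meshes. If one prefers not to depend on $H^{1}$-stability of the $L^{2}$ projection, the Scott--Zhang interpolant gives the same approximation order and preserves the homogeneous boundary condition.
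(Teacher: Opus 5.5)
Your proof is correct and follows the paper's approach. The paper only remarks that the coercivity and boundedness of $\bar{a}(\cdot,\cdot)$ imply the lemma. You spell that out as Céa's lemma in the $\bar{a}$-norm, using the $\lambda$-independent constants $c_{1},c_{2}$, together with the $H^{1}$-approximation property of $\mathscr{P}_{h}$ from Lemma \ref{l7.3.1}, which is the same comparison function the paper uses in Lemma \ref{l7.3.2}.
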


We apply the backward Euler method and FEM to construct a fully discretized scheme to \eqref{e7.4.3}: find $(\mathbf{E}_{h}^{n},\mathbf{H}_{h}^{n},\mathbf{u}_{h}^{n},\varphi_{h}^{n},p_{h}^{n})\in \mathbb{E}_{h}\times \mathbb{H}_{h}\times \mathbb{U}_{h}\times \mathbb{G}_{h}\times \mathbb{P}_{h}$ such that
\begin{equation}\label{e7.4.11}
	(\mathbf{E}_{h}^{0},\mathbf{H}_{h}^{0},\mathbf{u}_{h}^{0},\varphi_{h}^{0},p_{h}^{0})
	= (\mathfrak{P}_{h}\mathbf{E}_{0},\mathcal{P}_{h}\mathbf{H}_{0},\mathscr{P}_{h}\mathbf{u}_{0},\mathcal{G}_{h}\varphi_{0},P_{h}p_{0}),
\end{equation}
and
\begin{equation}\label{e7.4.12}
	\begin{aligned}
		& (\epsilon \bar\partial_{t}\mathbf{E}_{h}^{n},\mathbf{D}_{h}) 
		+ (\sigma \mathbf{E}_{h}^{n},\mathbf{D}_{h}) 
		- (\mathbf{H}_{h}^{n},\nabla\times \mathbf{D}_{h}) 
		- (L\nabla p_{h}^{n},\mathbf{D}_{h}) 
		= (\mathbf{j}(t_{n}),\mathbf{D}_{h}),
		\\
		& (\mu \bar\partial_{t}\mathbf{H}_{h}^{n},\mathbf{B}_{h}) 
		+ (\nabla\times \mathbf{E}_{h}^{n},\mathbf{B}_{h}) = 0,
		\\
		& \bar{a}(\mathbf{u}_{h}^{n}, \mathbf{v}_{h}) 
		- \bar{b}(\mathbf{v}_{h}, \varphi_{h}^{n})
		= 0,
		\\
		& \big(\frac{1}{\lambda}\varphi_{h}^{n}, \psi_{h}\big) 
		- \big(\frac{\alpha}{\lambda} p_{h}^{n}, \psi_{h}\big) 
		+ \bar{b}(\mathbf{u}_{h}^{n}, \psi_{h}) 
		= 0,
		\\
		& \big((c_{0}+\frac{\alpha^{2}}{\lambda})\bar\partial_{t}p_{h}^{n}, q_{h}\big)
		- \big(\frac{\alpha}{\lambda}\bar\partial_{t}\varphi_{h}^{n}, q_{h}\big) 
		+ (\kappa\nabla p_{h}^{n},\nabla q_{h}) 
		- (L\mathbf{E}_{h}^{n},\nabla q_{h}) 
		\\
		&\ \qquad\qquad = (g(t_{n}),q_{h}),
	\end{aligned}
\end{equation}
for all $n=1,2,\ldots,N$ and $(\mathbf{D}_{h},\mathbf{B}_{h},\mathbf{v}_{h},\psi_{h},q_{h})\in \mathbb{E}_{h}\times \mathbb{H}_{h}\times \mathbb{U}_{h}\times \mathbb{G}_{h}\times \mathbb{P}_{h}$.

In the next theorem, we study the existence, uniqueness and uniform stability of the solution to \eqref{e7.4.12} and \eqref{e7.4.11}.

\begin{theorem}\label{t7.4.1}
	Assume that (H1) and (H2) hold. The fully discretized equations \eqref{e7.4.12} and \eqref{e7.4.11} admit a unique solution $(\mathbf{E}_{h}^{n},\mathbf{H}_{h}^{n},\mathbf{u}_{h}^{n},\varphi_{h}^{n},p_{h}^{n})$. Furthermore, there exists a constant $C>0$ independent of $\tau$, $h$ and $\lambda$ such that for all $1\leq n\leq N$,
	\begin{equation*}
		\|\mathbf{E}_{h}^{n}\|^{2} + \|\mathbf{H}_{h}^{n}\|^{2} 
		+ \|\mathbf{u}_{h}^{n}\|_{1}^{2} + \|\varphi_{h}^{n}\|^{2} 
		+ \|p_{h}^{n}\|^{2} 
		\leq C.
	\end{equation*}
\end{theorem}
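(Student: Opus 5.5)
The plan is to mimic the continuous argument of Lemma \ref{l7.4.1} at the discrete level, using the discrete energy identity for backward Euler as in the proof of Theorem \ref{t7.3.1}. \emph{Existence and uniqueness.} At each step \eqref{e7.4.12} is a square linear system for the $n$th unknowns, so it suffices to show that the homogeneous problem has only the trivial solution. That follows from the stability estimate below applied with zero data and zero initial values.

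\emph{Energy identity.} Apply $\bar\partial_{t}$ to the fourth equation of \eqref{e7.4.12}; for $n\geq 2$ this is immediate. For $n=1$ we need the discrete initial data to satisfy the fourth equation at level $0$. Since \eqref{e7.4.11} does not guarantee this, I would handle the $n=1$ step separately: either regard $\varphi_{h}^{0}$ only through $\frac1\lambda\|\alpha p_{h}^{0}-\varphi_{h}^{0}\|^{2}$, or define the level-$0$ relation as holding by construction. Then test with
\begin{equation*}
(\mathbf{D}_{h},\mathbf{B}_{h},\mathbf{v}_{h},\psi_{h},q_{h})=(\mathbf{E}_{h}^{n},\mathbf{H}_{h}^{n},\bar\partial_{t}\mathbf{u}_{h}^{n},\varphi_{h}^{n},p_{h}^{n}).
\end{equation*}
The $\bar b$ terms cancel. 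The $\lambda^{-1}$ terms combine to $\frac1\lambda(\bar\partial_{t}(\alpha p_{h}^{n}-\varphi_{h}^{n}),\alpha p_{h}^{n}-\varphi_{h}^{n})$, which is bounded below by $\frac{1}{2\lambda\tau}(\|\alpha p_{h}^{n}-\varphi_{h}^{n}\|^{2}-\|\alpha p_{h}^{n-1}-\varphi_{h}^{n-1}\|^{2})$. Together with the $c_{0}$, $\bar a$, $\epsilon$ and $\mu$ terms, which are handled as in \eqref{e7.3.8}--\eqref{e7.3.9}, this gives a telescoping energy. The right-hand side $2(L\mathbf{E}_{h}^{n},\nabla p_{h}^{n})+(\mathbf{j},\mathbf{E}_{h}^{n})+(g,p_{h}^{n})$ is absorbed exactly as in \eqref{e7.4.6} using (H1). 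Summing over $n$ and applying the discrete Gr\"onwall lemma then bounds $\|\mathbf{E}_{h}^{n}\|,\|\mathbf{H}_{h}^{n}\|,\|\mathbf{u}_{h}^{n}\|_{\bar a},\|p_{h}^{n}\|$ by the initial energy plus data terms.

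\emph{Initial energy (main obstacle).} The delicate term is $\frac1\lambda\|\alpha P_{h}p_{0}-\mathcal{G}_{h}\varphi_{0}\|^{2}$. Since $\mathcal{G}_{h}$ and $P_{h}$ are $L^{2}$ contractions, it is at most $\frac{2\alpha^{2}}{\lambda}\|p_{0}\|^{2}+\frac2\lambda\|\varphi_{0}\|^{2}$. The second term is bounded uniformly in $\lambda$ by \eqref{e7.4.8}. The remaining initial terms $\|\mathscr{P}_{h}\mathbf{u}_{0}\|_{\bar a}$, $\|\mathfrak{P}_{h}\mathbf{E}_{0}\|$, etc.\ are bounded as in \eqref{e7.3.11}. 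If the $n=1$ compatibility issue resists the above treatment, I would instead test the undifferentiated fourth equation at level $n$ with $\bar\partial_t\varphi_h^n$-type combinations, or simply modify nothing and note that the defect $\frac1\lambda(\varphi_h^0-\alpha p_h^0)+\nabla\cdot\mathbf{u}_h^0$ projected onto $\mathbb{G}_h$ contributes a term controlled by the same $\lambda$-uniform bounds.

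\emph{Bound for $\varphi_{h}^{n}$.} Finally, apply the discrete inf-sup condition \eqref{e7.4.9} together with the third equation of \eqref{e7.4.12}:
\begin{equation*}
\delta_{0}\|\varphi_{h}^{n}\|\leq\sup_{\mathbf{v}_{h}}\frac{\bar a(\mathbf{u}_{h}^{n},\mathbf{v}_{h})}{\|\mathbf{v}_{h}\|_{1}}\leq C\|\mathbf{u}_{h}^{n}\|_{1}\leq C.
\end{equation*}
This completes the uniform bound, and the Poincar\'e inequality converts $\|\cdot\|_{\bar a}$ to $\|\cdot\|_{1}$.
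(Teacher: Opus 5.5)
Your argument is the paper's proof: the same test functions in \eqref{e7.4.12} and in the differenced fourth equation \eqref{e7.4.13}, the same telescoping energy including $\frac{1}{\lambda}\|\alpha p_{h}^{n}-\varphi_{h}^{n}\|^{2}$, and the same initial-energy bound via \eqref{e7.4.8}. It also uses the discrete Gr\"onwall lemma and \eqref{e7.4.9} for $\varphi_{h}^{n}$, exactly as the paper does.

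The $n=1$ issue you flag is real, and the paper passes over it. The paper uses \eqref{e7.4.13} at $n=1$, although the data \eqref{e7.4.11} do not satisfy the discrete fourth equation at level $0$.

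Only your last fallback works for the scheme as stated. Reinterpreting $\varphi_{h}^{0}$ does not make \eqref{e7.4.13} true at $n=1$. Imposing the level-$0$ relation changes $\varphi_{h}^{0}$, and hence changes the scheme. Your phrase ``controlled by the same $\lambda$-uniform bounds'' needs to be made precise, as follows.

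The defect is $r_{h}=\mathcal{G}_{h}\big(\frac{\alpha}{\lambda}(p_{0}-P_{h}p_{0})+\nabla\cdot(\mathscr{P}_{h}\mathbf{u}_{0}-\mathbf{u}_{0})\big)$. By Lemma \ref{l7.3.1}, $\|r_{h}\|$ is bounded uniformly in $\lambda$. It adds $-\frac{1}{\tau}(r_{h},\varphi_{h}^{1})$ to the right-hand side of the $n=1$ energy identity.

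After multiplying by $\tau$, this term is of size $O(1)$ rather than $O(\tau)$, so it cannot be handed to Gr\"onwall. Moreover, $\|\varphi_{h}^{1}\|$ is not controlled $\lambda$-uniformly by the energy: the term $\frac{1}{\lambda}\|\alpha p_{h}^{1}-\varphi_{h}^{1}\|^{2}$ is too weak.

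You must therefore invoke \eqref{e7.4.9} together with the third equation of \eqref{e7.4.12} already at this step:
$\|\varphi_{h}^{1}\|\leq C\|\mathbf{u}_{h}^{1}\|_{1}\leq C\|\mathbf{u}_{h}^{1}\|_{\bar{a}}$.
Then absorb $C\|r_{h}\|\,\|\mathbf{u}_{h}^{1}\|_{\bar{a}}$ into $\frac{1}{2}\|\mathbf{u}_{h}^{1}\|_{\bar{a}}^{2}$ by Young's inequality. The steps for $n\geq 2$ and the uniqueness argument, where the defect is zero, go through as you wrote them.
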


\begin{proof}
	Notice that the number of linear equations \eqref{e7.4.12} equals the number of unknowns, and hence the uniqueness implies the existence.
	
	According to the fourth equation of \eqref{e7.4.12}, we have
	\begin{equation}\label{e7.4.13}
		\big(\frac{1}{\lambda}\bar\partial_{t}\varphi_{h}^{n}, \psi_{h}\big) 
		- \big(\frac{\alpha}{\lambda}\bar\partial_{t} p_{h}^{n}, \psi_{h}\big) 
		+ \bar{b}(\bar\partial_{t}\mathbf{u}_{h}^{n}, \psi_{h}) 
		= 0, \quad \forall\, \psi_{h}\in \mathbb{G}_{h}.
	\end{equation}
	Taking $(\mathbf{D}_{h},\mathbf{B}_{h},\mathbf{v}_{h},q_{h})=(\mathbf{E}_{h}^{n},\mathbf{H}_{h}^{n},\bar\partial_{t}\mathbf{u}_{h}^{n},p_{h}^{n})$ in the first three and last equations of \eqref{e7.4.12}, and $\psi_{h}=\varphi_{h}^{n}$ in \eqref{e7.4.13}, respectively, then summing these up and applying the Cauchy--Schwarz and Young inequalities, we get
	\begin{equation}\label{e7.4.14}
		\begin{aligned}
			&\ (\epsilon \bar\partial_{t}\mathbf{E}_{h}^{n},\mathbf{E}_{h}^{n}) 
			+ (\mu \bar\partial_{t}\mathbf{H}_{h}^{n},\mathbf{H}_{h}^{n}) 
			+ \bar{a}(\mathbf{u}_{h}^{n}, \bar\partial_{t}\mathbf{u}_{h}^{n}) 
			+ (c_{0}\bar\partial_{t}p_{h}^{n}, p_{h}^{n})
			\\
			&\ + \big(\frac{1}{\lambda}\bar\partial_{t}(\alpha p_{h}^{n}-\varphi_{h}^{n}),\alpha p_{h}^{n}-\varphi_{h}^{n}\big) 
			+ \sigma \|\mathbf{E}_{h}^{n}\|^{2} 
			+ \kappa \|\nabla p_{h}^{n}\|^{2}
			\\
			=&\  2(L \mathbf{E}_{h}^{n},\nabla p_{h}^{n}) 
			+ (\mathbf{j}(t_{n}),\mathbf{E}_{h}^{n}) + (g(t_{n}),p_{h}^{n}) 
			\\
			\leq&\ \frac{L^{2}}{\kappa}\|\mathbf{E}_{h}^{n}\|^{2} + \kappa\|\nabla p_{h}^{n}\|^{2} 
			+ \frac{1}{4\sigma}\|\mathbf{j}(t_{n})\|^{2} + \sigma\|\mathbf{E}_{h}^{n}\|^{2} 
			\\
			&\ + \frac{1}{2}\|g(t_{n})\|^{2} + \frac{1}{2}\|p_{h}^{n}\|^{2}.
		\end{aligned}
	\end{equation}
	In a similar way as proving \eqref{e7.3.8}, we obtain 
	\begin{equation}\label{e7.4.15}
		\begin{aligned}
			\tau\bar{a}(\mathbf{u}_{h}^{n}, \bar\partial_{t}\mathbf{u}_{h}^{n}) 
			\geq&\ \frac{1}{2}\|\mathbf{u}_{h}^{n}\|_{\bar{a}}^{2} - \frac{1}{2}\|\mathbf{u}_{h}^{n-1}\|_{\bar{a}}^{2}, 
			\\
			\tau(\bar\partial_{t}(\alpha p_{h}^{n}-\varphi_{h}^{n}),\alpha p_{h}^{n}-\varphi_{h}^{n}) 
			\geq&\ \frac{1}{2}\|\alpha p_{h}^{n}-\varphi_{h}^{n}\|^{2} - \frac{1}{2}\|\alpha p_{h}^{n-1}-\varphi_{h}^{n-1}\|^{2}.
		\end{aligned}
	\end{equation}
	According to \eqref{e7.4.14}, \eqref{e7.4.15}, \eqref{e7.3.8} and \eqref{e7.3.9}, we have
	\begin{align*}
		&\ \epsilon\|\mathbf{E}_{h}^{n}\|^{2} 
		+ \mu\|\mathbf{H}_{h}^{n}\|^{2}
		+ \|\mathbf{u}_{h}^{n}\|_{\bar{a}}^{2}
		+ c_{0}\|p_{h}^{n}\|^{2} 
		+ \frac{1}{\lambda}\|\alpha p_{h}^{n}-\varphi_{h}^{n}\|^{2}
		\\
		\leq&\ \epsilon\|\mathbf{E}_{h}^{n-1}\|^{2} 
		+ \mu\|\mathbf{H}_{h}^{n-1}\|^{2}
		+ \|\mathbf{u}_{h}^{n-1}\|_{\bar{a}}^{2}
		+ c_{0}\|p_{h}^{n-1}\|^{2} 
		+ \frac{1}{\lambda}\|\alpha p_{h}^{n-1}-\varphi_{h}^{n-1}\|^{2}
		\\
		&\ + \frac{1}{2\sigma}\tau\|\mathbf{j}(t_{n})\|^{2} 
		+ \tau\|g(t_{n})\|^{2} 
		+ \frac{2L^{2}}{\kappa}\tau\|\mathbf{E}_{h}^{n}\|^{2} 
		+ \tau\|p_{h}^{n}\|^{2}.
	\end{align*}
	Inductively for $n\geq 1$ in the above inequality, we get
	\begin{equation}\label{e7.4.16}
		\begin{aligned}
			&\ \epsilon\|\mathbf{E}_{h}^{n}\|^{2} 
			+ \mu\|\mathbf{H}_{h}^{n}\|^{2}
			+ \|\mathbf{u}_{h}^{n}\|_{\bar{a}}^{2}
			+ c_{0}\|p_{h}^{n}\|^{2} 
			+ \frac{1}{\lambda}\|\alpha p_{h}^{n}-\varphi_{h}^{n}\|^{2}
			\\
			\leq&\ \epsilon\|\mathbf{E}_{h}^{0}\|^{2} 
			+ \mu\|\mathbf{H}_{h}^{0}\|^{2}
			+ \|\mathbf{u}_{h}^{0}\|_{\bar{a}}^{2}
			+ c_{0}\|p_{h}^{0}\|^{2} 
			+ \frac{1}{\lambda}\|\alpha p_{h}^{0}-\varphi_{h}^{0}\|^{2}
			\\
			&\ + \frac{1}{2\sigma}\sum_{i=1}^{n}\tau\|\mathbf{j}(t_{i})\|^{2} 
			+ \sum_{i=1}^{n}\tau\|g(t_{i})\|^{2}
			+ \frac{2L^{2}}{\kappa}\sum_{i=1}^{n}\tau\|\mathbf{E}_{h}^{i}\|^{2} 
			+ \sum_{i=1}^{n}\tau\|p_{h}^{i}\|^{2}.
		\end{aligned}
	\end{equation}
	By virtue of \eqref{e7.4.11}, \eqref{e7.3.11} and \eqref{e7.4.8}, we have
	\begin{align*}
		\|\mathbf{u}_{h}^{0}\|_{\bar{a}} 
		\leq&\ c_{2}\|\mathscr{P}_{h}\mathbf{u}_{0}\|_{1}
		\leq C\|\mathbf{u}_{0}\|_{1},
		\\
		\frac{1}{\lambda}\|\alpha p_{h}^{0}-\varphi_{h}^{0}\|^{2} 
		\leq&\ \frac{2\alpha^{2}}{\lambda}\|p_{h}^{0}\|^{2} + \frac{2}{\lambda}\|\varphi_{h}^{0}\|^{2} 
		\leq \frac{6\alpha^{2}}{\lambda}\|p_{0}\|^{2} + 2\alpha\|p_{0}\|^{2} + 2\alpha\|\mathbf{u}_{0}\|_{1}^{2},
	\end{align*}
	where $C>0$ is a constant independent of $\tau$, $h$ and $\lambda$. Noticing \eqref{e7.3.11} and (H2), and then applying the discrete Gr\"{o}nwall lemma to \eqref{e7.4.16}, we obtain
	\begin{equation*}
		\|\mathbf{E}_{h}^{n}\|^{2} 
		+ \|\mathbf{H}_{h}^{n}\|^{2}
		+ \|\mathbf{u}_{h}^{n}\|_{\bar{a}}^{2}
		+ \|p_{h}^{n}\|^{2} 
		\leq C e^{C\sum_{i=1}^{n}\tau} = C,
	\end{equation*}
	where $C>0$ is a constant independent of $\tau$, $h$ and $\lambda$. Thus, $\|\mathbf{u}_{h}^{n}\|_{1} \leq \frac{1}{c_{1}}\|\mathbf{u}_{h}^{n}\|_{\bar{a}} \leq C$. By means of \eqref{e7.4.9} and the third equation of \eqref{e7.4.12}, we get
	\begin{align*}
		\delta_{0}\|\varphi_{h}^{n}\| \leq&\ \sup_{\mathbf{0}\neq \mathbf{v}_{h}\in \mathbb{U}_{h}}\frac{\bar{b}(\mathbf{v}_{h},\varphi_{h}^{n})}{\|\mathbf{v}_{h}\|_{1}} 
		= \sup_{\mathbf{0}\neq \mathbf{v}_{h}\in \mathbb{U}_{h}}\frac{\bar{a}(\mathbf{u}_{h}^{n}, \mathbf{v}_{h})}{\|\mathbf{v}_{h}\|_{1}} 
		\\
		\leq&\ \sup_{\mathbf{0}\neq \mathbf{v}_{h}\in \mathbb{U}_{h}}\frac{C\|\mathbf{u}_{h}^{n}\|_{1}\|\mathbf{v}_{h}\|_{1}}{\|\mathbf{v}_{h}\|_{1}} 
		= C\|\mathbf{u}_{h}^{n}\|_{1} \leq C.
	\end{align*}
	
	Now, we study the uniqueness of solution to \eqref{e7.4.12} and \eqref{e7.4.11}. For contradiction, assume that $\tilde{\mathbf{U}}_{i,h}^{n}:= (\mathbf{E}_{i,h}^{n},\mathbf{H}_{i,h}^{n},\mathbf{u}_{i,h}^{n},\varphi_{i,h}^{n},p_{i,h}^{n})$ $(i=1,2;\ n=1,2,\ldots,N)$ are two solutions of \eqref{e7.4.12} and \eqref{e7.4.11}. Then $\tilde{\mathbf{U}}_{1,h}^{n}-\tilde{\mathbf{U}}_{2,h}^{n}$ satisfies homogeneous equations with homogeneous initial and boundary conditions. Thus, we obtain for all $1\leq n\leq N$,
	\begin{align*}
		&\ \|\mathbf{E}_{1,h}^{n}-\mathbf{E}_{2,h}^{n}\|^{2} 
		+ \|\mathbf{H}_{1,h}^{n}-\mathbf{H}_{2,h}^{n}\|^{2} 
		+ \|\mathbf{u}_{1,h}^{n}-\mathbf{u}_{2,h}^{n}\|_{1}^{2} 
		\\
		&\ + \|\varphi_{1,h}^{n}-\varphi_{2,h}^{n}\|^{2} 
		+ \|p_{1,h}^{n}-p_{2,h}^{n}\|^{2} 
		\leq 0,
	\end{align*}
	which implies $\tilde{\mathbf{U}}_{1,h}^{n}=\tilde{\mathbf{U}}_{2,h}^{n}$.
	Then, the proof is completed.
\end{proof}

\subsection{High-order locking-free error estimates}\label{s7.4.3}
In this part, we investigate the uniform error estimates between the exact and numerical solutions to \eqref{e7.4.3}  with respect to $\lambda$. We further assume  
\begin{itemize}
	\item [\bf{(H4)}] For any integer $\beta\geq 1$, there exists a constant $C>0$ independent of $\lambda$ such that
	\begin{equation*}
		\|\varphi(t)\|_{\beta}^{2} + \int_{0}^{t}{\|\frac{\partial}{\partial t} \varphi(\theta)\|_{\beta}^{2}}{\, \mathrm{d}\theta} 
		+ \int_{0}^{t}{\|\frac{\partial^{2}}{\partial t^{2}} \varphi(\theta)\|^{2}}{\, \mathrm{d}\theta} \leq C, \quad \forall \, t\in (0,T].
	\end{equation*}
\end{itemize}

Define error functions for $0\leq n\leq N$,
\begin{align*}
	\mathbf{u}(t_{n})-\mathbf{u}_{h}^{n} =&\ (\mathbf{u}(t_{n})-\mathscr{Q}_{h}\mathbf{u}(t_{n})) + (\mathscr{Q}_{h}\mathbf{u}(t_{n})-\mathbf{u}_{h}^{n}) =: \tilde{\boldsymbol{\rho}}(t_{n}) + \tilde{\boldsymbol{\xi}}_{h}^{n}, 
	\\
	\varphi(t_{n})-\varphi_{h}^{n} =&\ (\varphi(t_{n})-\mathcal{G}_{h}\varphi(t_{n})) + (\mathcal{G}_{h}\varphi(t_{n})-\varphi_{h}^{n}) =: \pi(t_{n}) + \zeta_{h}^{n}.
\end{align*}
From Lemmas \ref{l7.4.2} and \ref{l7.4.3}, it follows that there exists a constant $C>0$ independent of $\tau$, $h$ and $\lambda$ such that for all $1\leq n\leq N$,
\begin{equation}\label{e7.4.17}
	\begin{aligned}
		\|\tilde{\boldsymbol{\rho}}(t_{n})\|_{1} =&\ \|\mathbf{u}(t_{n})-\mathscr{Q}_{h}\mathbf{u}(t_{n})\|_{1} 
		\leq Ch^{\min\{l,\beta\}}\|\mathbf{u}(t_{n})\|_{\beta+1} \leq Ch^{\min\{l,\beta\}}, 
		\\
		\|\pi(t_{n})\| =&\ \|\varphi(t_{n})-\mathcal{G}_{h}\varphi(t_{n})\| 
		\leq Ch^{\min\{l,\beta\}}\|\varphi(t_{n})\|_{\beta} \leq Ch^{\min\{l,\beta\}}.
	\end{aligned}
\end{equation}

Now, we are ready to derive error estimates between the exact and numerical solutions to \eqref{e7.4.3}.

\begin{theorem}\label{t7.4.2}
	Assume (H1)--(H4). Let $(\mathbf{E},\mathbf{H},\mathbf{u},\varphi,p)$ and $(\mathbf{E}_{h}^{n},\mathbf{H}_{h}^{n},\mathbf{u}_{h}^{n},\varphi_{h}^{n},p_{h}^{n})$ be the exact and numerical solutions to \eqref{e7.4.3}, respectively. Then there exists a constant $C>0$ independent of $\tau$, $h$ and $\lambda$ such that for all $1\leq n\leq N$,
	\begin{align*}
		&\ \|\mathbf{E}(t_{n})-\mathbf{E}_{h}^{n}\|^{2} 
		+ \|\mathbf{H}(t_{n})-\mathbf{H}_{h}^{n}\|^{2} 
		+ \|\mathbf{u}(t_{n})-\mathbf{u}_{h}^{n}\|_{1}^{2} 
		\\
		&\ + \|\varphi(t_{n})-\varphi_{h}^{n}\|^{2} 
		+ \|p(t_{n})-p_{h}^{n}\|^{2} 
		\leq C(\tau^{2}+h^{2\min\{l,\beta\}}).
	\end{align*}
\end{theorem}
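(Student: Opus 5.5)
We follow the energy-argument template of Theorem \ref{t7.3.2}, now built on the five-field scheme and the projections $\mathcal{I}_{h}$, $\mathcal{P}_{h}$, $\mathscr{Q}_{h}$, $\mathcal{G}_{h}$, $\mathcal{R}_{h}$. The electromagnetic and pressure errors keep the splittings $\Xi+\Pi_{h}^{n}$, $\Lambda+\Theta_{h}^{n}$, $\gamma+\eta_{h}^{n}$ of Section \ref{s7.3.2}. The displacement and auxiliary errors use $\tilde{\boldsymbol{\rho}}+\tilde{\boldsymbol{\xi}}_{h}^{n}$ and $\pi+\zeta_{h}^{n}$. All projection errors are then bounded by \eqref{e7.3.12} (only the $\lambda$-free parts), \eqref{e7.4.17}, (H3) and (H4). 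The point of this choice is that $\mathscr{Q}_{h}$ is defined through $\bar a$ alone, so no factor $\lambda$ ever enters the projection errors.

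\textbf{Step 1: error equations.} We subtract \eqref{e7.4.12} from \eqref{e7.4.3} tested with discrete functions. The first two error equations coincide with \eqref{e7.3.13}--\eqref{e7.3.14}. For the third, \eqref{e7.4.10} gives
\[
\bar a(\tilde{\boldsymbol{\xi}}_{h}^{n},\mathbf{v}_{h})-\bar b(\mathbf{v}_{h},\zeta_{h}^{n})=\bar b(\mathbf{v}_{h},\pi(t_{n})).
\]
For the fourth we get
\[
(\tfrac1\lambda\zeta_{h}^{n},\psi_{h})-(\tfrac\alpha\lambda\eta_{h}^{n},\psi_{h})+\bar b(\tilde{\boldsymbol{\xi}}_{h}^{n},\psi_{h})=-(\tfrac1\lambda\pi,\psi_{h})+(\tfrac\alpha\lambda\gamma,\psi_{h})-\bar b(\tilde{\boldsymbol{\rho}},\psi_{h}).
\]
Here $(\pi,\psi_{h})=0$ because $\mathcal{G}_{h}$ is the $L^{2}$ projection. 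The fifth equation mirrors \eqref{e7.3.16}, with $\mathcal{R}_{h}$ removing the $\kappa$-term and with time-truncation terms for $p$ and $\varphi$, weighted by $c_0+\alpha^2/\lambda$ and $\alpha/\lambda$ respectively.

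\textbf{Step 2: energy identity.} We difference the fourth error equation in time, as in \eqref{e7.4.13}, and test with $\psi_{h}=\zeta_{h}^{n}$. We take $\mathbf{v}_{h}=\bar\partial_{t}\tilde{\boldsymbol{\xi}}_{h}^{n}$ in the third equation, and $\Pi_{h}^{n},\Theta_{h}^{n},\eta_{h}^{n}$ in the remaining ones, then sum. As in \eqref{e7.4.14}, the left side telescopes into
\[
\epsilon\|\Pi_{h}^{n}\|^{2}+\mu\|\Theta_{h}^{n}\|^{2}+\|\tilde{\boldsymbol{\xi}}_{h}^{n}\|_{\bar a}^{2}+c_{0}\|\eta_{h}^{n}\|^{2}+\tfrac1\lambda\|\alpha\eta_{h}^{n}-\zeta_{h}^{n}\|^{2},
\]
plus $\kappa\tau\|\nabla\eta_{h}^{n}\|^{2}$ and $\sigma\tau\|\Pi_{h}^{n}\|^{2}$. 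The cross terms $2L(\nabla\eta_{h}^{n},\Pi_{h}^{n})$ are absorbed by (H1). Terms with coefficient $1/\lambda\le1$ that involve $\bar\partial_t\gamma$ and the truncation errors are harmless, since $\lambda\ge1$.

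\textbf{Step 3: the main obstacle.} The terms $\bar b(\bar\partial_{t}\tilde{\boldsymbol{\xi}}_{h}^{n},\pi(t_{n}))$ and $\bar b(\bar\partial_{t}\tilde{\boldsymbol{\rho}},\zeta_{h}^{n})$ cannot be absorbed directly. The energy controls neither $\bar\partial_{t}\tilde{\boldsymbol{\xi}}_{h}$ nor, uniformly in $\lambda$, $\zeta_{h}$. For the first, we sum by parts in time:
\[
\sum_{i}\tau\,\bar b(\bar\partial_{t}\tilde{\boldsymbol{\xi}}_{h}^{i},\pi(t_i))=\bar b(\tilde{\boldsymbol{\xi}}_{h}^{n},\pi(t_n))-\bar b(\tilde{\boldsymbol{\xi}}_{h}^{0},\pi(t_0))-\sum_{i}\tau\,\bar b(\tilde{\boldsymbol{\xi}}_{h}^{i-1},\bar\partial_{t}\pi(t_i)).
\]
The boundary term at $n$ is absorbed by Young into $\tfrac12\|\tilde{\boldsymbol{\xi}}_{h}^{n}\|_{\bar a}^{2}$. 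The rest is bounded by $h^{2\min\{l,\beta\}}$ using (H4) and the Newton--Leibniz argument of \eqref{e7.3.21}. For $\zeta_{h}^{n}$ we use the discrete inf-sup condition \eqref{e7.4.9} on the third error equation, exactly as in Theorem \ref{t7.4.1}, which gives
\[
\|\zeta_{h}^{n}\|\le C(\|\tilde{\boldsymbol{\xi}}_{h}^{n}\|_{1}+\|\pi(t_{n})\|).
\]
The term $\bar b(\bar\partial_{t}\tilde{\boldsymbol{\rho}},\zeta_{h}^{n})$ is then bounded by $\tau\|\bar\partial_t\tilde{\boldsymbol\rho}\|_1^2+\tau\|\tilde{\boldsymbol{\xi}}_{h}^{n}\|_{1}^{2}+\tau h^{2\min\{l,\beta\}}$, where $\tau\|\bar\partial_t\tilde{\boldsymbol\rho}\|_1^2$ is bounded as in \eqref{e7.3.22}.

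\textbf{Step 4: initial data and conclusion.} The initial errors are bounded as in \eqref{e7.3.20}, using Lemma \ref{l7.4.3} for $\tilde{\boldsymbol{\xi}}_{h}^{0}$. For the $\frac1\lambda\|\alpha\eta_{h}^{0}-\zeta_{h}^{0}\|^{2}$ term, note that $\zeta_{h}^{0}=0$ by \eqref{e7.4.11}. The discrete Gr\"onwall lemma then gives the bound $C(\tau^{2}+h^{2\min\{l,\beta\}})$ for the discrete errors. The inf-sup bound of Step 3 transfers this to $\zeta_{h}^{n}$, and combining with \eqref{e7.3.12}, \eqref{e7.4.17} and the triangle inequality completes the proof.
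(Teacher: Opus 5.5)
Your proposal is correct, and apart from one step it is the paper's argument. It uses the same splittings and test functions. It relies on the same cancellation of the $\bar b(\bar\partial_t\tilde{\boldsymbol{\xi}}_h^n,\zeta_h^n)$ terms that produces $\frac{1}{\lambda}\|\alpha\eta_h^n-\zeta_h^n\|^2$. It also uses \eqref{e7.4.9} in the same way, to control $\zeta_h^n$ by $\tilde{\boldsymbol{\xi}}_h^n$ before the discrete Gr\"onwall lemma.

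The difference is the third error equation. The paper observes that $\mathbb{G}_h$ is defined as \emph{discontinuous} piecewise $P_{l-1}$, so $\nabla\cdot\mathbf{v}_h\in\mathbb{G}_h$ for every $\mathbf{v}_h\in\mathbb{U}_h$, and hence $\bar b(\mathbf{v}_h,\pi(t_n))=0$. Consequently \eqref{e7.4.18} is homogeneous, and the term $\bar b(\bar\partial_t\tilde{\boldsymbol{\xi}}_h^n,\pi(t_n))$ that you single out as the main obstacle never appears. The inf-sup step \eqref{e7.4.24} then gives $\|\zeta_h^n\|\le C\|\tilde{\boldsymbol{\xi}}_h^n\|_1$ directly.

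You keep that term and remove it by summation by parts in time, which is sound:
the boundary term at $t_n$ is absorbed into $\|\tilde{\boldsymbol{\xi}}_h^n\|_{\bar a}^2$;
the remaining sum is handled by Gr\"onwall together with the bound on $\bar\partial_t\pi$, which the paper needs anyway for the $\frac{\alpha}{\lambda}\bar\partial_t\pi$ term in \eqref{e7.4.20};
the $t_0$ term only needs $\|\varphi_0\|_\beta\le C$, which follows from (H4) by continuity in time.

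The paper's route is shorter. Yours does not use $\nabla\cdot\mathbb{U}_h\subset\mathbb{G}_h$, so it also covers a genuine Taylor--Hood pair with \emph{continuous} $P_{l-1}$ pressures, where that orthogonality fails but \eqref{e7.4.9} is the classical stability result. This is a real gain. For $l=2$, continuous $P_2$ paired with discontinuous $P_1$ (as $\mathbb{G}_h$ is literally written) is not inf-sup stable on general tetrahedral meshes. So the paper's two ingredients, the orthogonality and \eqref{e7.4.9}, are not simultaneously available there, whereas your argument needs only the inf-sup condition.
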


\begin{proof}
	Noticing \eqref{e7.4.10} and utilizing the third equations of \eqref{e7.4.3} and of \eqref{e7.4.12}, respectively, we obtain for any $\mathbf{v}_{h}\in \mathbb{U}_{h}$,
	\begin{equation}\label{e7.4.18}
		\begin{aligned}
			&\ \bar{a}(\tilde{\boldsymbol{\xi}}_{h}^{n},\mathbf{v}_{h}) - \bar{b}(\mathbf{v}_{h},\zeta_{h}^{n}) 
			\\
			=&\ \bar{a}(\mathscr{Q}_{h}\mathbf{u}(t_{n}),\mathbf{v}_{h}) 
			- \bar{b}(\mathbf{v}_{h},\mathcal{G}_{h}\varphi(t_{n})) 
			- \bar{a}(\mathbf{u}_{h}^{n},\mathbf{v}_{h}) 
			+ \bar{b}(\mathbf{v}_{h},\varphi_{h}^{n})
			\\
			=&\ \bar{a}(\mathbf{u}(t_{n}),\mathbf{v}_{h}) 
			- \bar{b}(\mathbf{v}_{h},\varphi(t_{n})) 
			- \bar{a}(\mathbf{u}_{h}^{n},\mathbf{v}_{h}) 
			+ \bar{b}(\mathbf{v}_{h},\varphi_{h}^{n})
			=0,
		\end{aligned}
	\end{equation}
	where we have used $\bar{b}(\mathbf{v}_{h},\mathcal{G}_{h}\varphi(t_{n}))=\bar{b}(\mathbf{v}_{h},\varphi(t_{n}))$ due to $\nabla\cdot \mathbf{v}_{h}|_{K} \in \mathbb{G}_{h}|_{K}$. 
	According to the fourth equations of \eqref{e7.4.3} and of \eqref{e7.4.12}, respectively, we get for any $\psi_{h}\in \mathbb{G}_{h}$,
	\begin{align*}
		&\ \big(\frac{1}{\lambda}\zeta_{h}^{n}, \psi_{h}\big) 
		- \big(\frac{\alpha}{\lambda}\eta_{h}^{n}, \psi_{h}\big) 
		+ \bar{b}(\tilde{\boldsymbol{\xi}}_{h}^{n}, \psi_{h}) 
		\\
		=&\ \big(\frac{1}{\lambda}\mathcal{G}_{h}\varphi(t_{n}), \psi_{h}\big) 
		- \big(\frac{\alpha}{\lambda}\mathcal{R}_{h}p(t_{n}), \psi_{h}\big) 
		+ \bar{b}(\mathscr{Q}_{h}\mathbf{u}(t_{n}), \psi_{h}) 
		\\
		&\ - \big(\frac{1}{\lambda}\varphi_{h}^{n}, \psi_{h}\big) 
		+ \big(\frac{\alpha}{\lambda} p_{h}^{n}, \psi_{h}\big) 
		- \bar{b}(\mathbf{u}_{h}^{n}, \psi_{h}) 
		\\
		=&\ \big(\frac{1}{\lambda}\varphi(t_{n}), \psi_{h}\big) 
		- \big(\frac{\alpha}{\lambda} \mathcal{R}_{h}p(t_{n}), \psi_{h}\big) 
		+ \bar{b}(\mathscr{Q}_{h}\mathbf{u}(t_{n}), \psi_{h}) 
		\\
		&\ -\big(\frac{1}{\lambda}\varphi(t_{n}), \psi_{h}\big) 
		+ \big(\frac{\alpha}{\lambda}p(t_{n}), \psi_{h}\big) 
		- \bar{b}(\mathbf{u}(t_{n}), \psi_{h}) 
		\\
		=&\ \big(\frac{\alpha}{\lambda}\gamma(t_{n}), \psi_{h}\big) 
		- \bar{b}(\tilde{\boldsymbol{\rho}}(t_{n}), \psi_{h}),
	\end{align*}
	which yields
	\begin{equation}\label{e7.4.19}
		\begin{aligned}
			&\ \big(\frac{1}{\lambda}\bar\partial_{t}\zeta_{h}^{n}, \psi_{h}\big) 
			- \big(\frac{\alpha}{\lambda}\bar\partial_{t}\eta_{h}^{n}, \psi_{h}\big) 
			+ \bar{b}(\bar\partial_{t}\tilde{\boldsymbol{\xi}}_{h}^{n}, \psi_{h}) 
			\\
			=&\ \big(\frac{\alpha}{\lambda}\bar\partial_{t}\gamma(t_{n}), \psi_{h}\big) 
			- \bar{b}(\bar\partial_{t}\tilde{\boldsymbol{\rho}}(t_{n}), \psi_{h}).
		\end{aligned}
	\end{equation}
	Noticing \eqref{e7.3.2} and applying the last equations of \eqref{e7.4.3} and of \eqref{e7.4.12}, respectively, we have for any $q_{h}\in \mathbb{P}_{h}$,
	\begin{equation}\label{e7.4.20}
		\begin{aligned}
			&\ \big((c_{0}+\frac{\alpha^{2}}{\lambda})\bar\partial_{t}\eta_{h}^{n}, q_{h}\big)
			- \big(\frac{\alpha}{\lambda}\bar\partial_{t}\zeta_{h}^{n}, q_{h}\big) 
			+ (\kappa\nabla \eta_{h}^{n},\nabla q_{h}) 
			\\
			=&\ \big((c_{0}+\frac{\alpha^{2}}{\lambda})\bar\partial_{t}\mathcal{R}_{h}p(t_{n}), q_{h}\big)
			- \big(\frac{\alpha}{\lambda}\bar\partial_{t}\mathcal{G}_{h}\varphi(t_{n}), q_{h}\big) 
			+ (\kappa\nabla \mathcal{R}_{h}p(t_{n}),\nabla q_{h}) 
			\\
			&\ - \big((c_{0}+\frac{\alpha^{2}}{\lambda})\bar\partial_{t} p_{h}^{n}, q_{h}\big)
			+ \big(\frac{\alpha}{\lambda}\bar\partial_{t}\varphi_{h}^{n}, q_{h}\big) 
			- (\kappa\nabla p_{h}^{n},\nabla q_{h}) 
			\\
			=&\ \big((c_{0}\!+\!\frac{\alpha^{2}}{\lambda})\bar\partial_{t}(\mathcal{R}_{h}p(t_{n})\!-\!p(t_{n})),q_{h}\big) 
			\!+\! \big((c_{0}\!+\!\frac{\alpha^{2}}{\lambda})(\bar\partial_{t}p(t_{n})\!-\!\frac{\partial}{\partial t}p(t_{n})),q_{h}\big) 
			\\
			&\ + \big((c_{0}+\frac{\alpha^{2}}{\lambda})\frac{\partial}{\partial t}p(t_{n}),q_{h}\big) 
			- \big(\frac{\alpha}{\lambda}\bar\partial_{t}(\mathcal{G}_{h}\varphi(t_{n})-\varphi(t_{n})),q_{h}\big) 
			\\
			&\ - \big(\frac{\alpha}{\lambda}(\bar\partial_{t}\varphi(t_{n})-\frac{\partial}{\partial t}\varphi(t_{n})),q_{h}\big) 
			- \big(\frac{\alpha}{\lambda}\frac{\partial}{\partial t}\varphi(t_{n}),q_{h}\big) 
			+ (\kappa\nabla p(t_{n}),\nabla q_{h}) 
			\\
			&\ - \big((c_{0}+\frac{\alpha^{2}}{\lambda})\bar\partial_{t} p_{h}^{n}, q_{h}\big)
			+ \big(\frac{\alpha}{\lambda}\bar\partial_{t}\varphi_{h}^{n}, q_{h}\big) 
			- (\kappa\nabla p_{h}^{n},\nabla q_{h}) 
			\\
			=&\ -\big((c_{0}+\frac{\alpha^{2}}{\lambda})\bar\partial_{t}\gamma(t_{n}),q_{h}\big) 
			+ \big((c_{0}+\frac{\alpha^{2}}{\lambda})e_{3}(t_{n}),q_{h}\big) 
			+ \big(\frac{\alpha}{\lambda}\bar\partial_{t}\pi(t_{n}),q_{h}\big) 
			\\
			&\ - \big(\frac{\alpha}{\lambda}(\bar\partial_{t}\varphi(t_{n})-\frac{\partial}{\partial t}\varphi(t_{n})),q_{h}\big) 
			+ (L\Xi(t_{n}),\nabla q_{h}) 
			+ (L\Pi_{h}^{n},\nabla q_{h}).
		\end{aligned}
	\end{equation}
	For simplifying notations, we define 
	\begin{equation*}
		e_{5}(t_{n}) = \bar\partial_{t}\varphi(t_{n})-\frac{\partial}{\partial t}\varphi(t_{n}).
	\end{equation*}
	Taking $\mathbf{D}_{h}=\Pi_{h}^{n}$, $\mathbf{B}_{h}=\Theta_{h}^{n}$, $\mathbf{v}_{h}=\bar\partial_{t}\tilde{\boldsymbol{\xi}}_{h}^{n}$, $\psi_{h}=\zeta_{h}^{n}$, $q_{h}=\eta_{h}^{n}$ in \eqref{e7.3.13}, \eqref{e7.3.14} and \eqref{e7.4.18}--\eqref{e7.4.20}, respectively, and summing these up, we obtain an error equation
	\begin{equation}\label{e7.4.21}
		\begin{aligned}
			&\ (\epsilon\bar\partial_{t}\Pi_{h}^{n},\Pi_{h}^{n}) 
			+ (\mu\bar\partial_{t}\Theta_{h}^{n},\Theta_{h}^{n}) 
			+ \bar{a}(\tilde{\boldsymbol{\xi}}_{h}^{n},\bar\partial_{t}\tilde{\boldsymbol{\xi}}_{h}^{n}) 
			+ (c_{0}\bar\partial_{t}\eta_{h}^{n}, \eta_{h}^{n}) 
			\\
			&\ + \big(\frac{1}{\lambda}\bar\partial_{t}(\alpha\eta_{h}^{n}-\zeta_{h}^{n}), \alpha\eta_{h}^{n}-\zeta_{h}^{n}\big) 
			+ (\kappa\nabla \eta_{h}^{n},\nabla \eta_{h}^{n}) 
			\\
			=&\ -(\epsilon\bar\partial_{t}\Xi(t_{n}),\Pi_{h}^{n}) 
			+ (\epsilon\bm{e}_{1}(t_{n}),\Pi_{h}^{n}) 
			- (\sigma \Xi(t_{n}),\Pi_{h}^{n}) 
			- (\sigma \Pi_{h}^{n},\Pi_{h}^{n}) 
			\\
			&\ + (L\nabla \gamma(t_{n}),\Pi_{h}^{n}) 
			+ 2(L\nabla \eta_{h}^{n},\Pi_{h}^{n})
			-(\mu\bar\partial_{t}\Lambda(t_{n}),\Theta_{h}^{n})  
			+ (\mu\bm{e}_{2}(t_{n}),\Theta_{h}^{n})
			\\
			&\ - (\nabla\times \Xi(t_{n}),\Theta_{h}^{n}) 
			+ \big(\frac{\alpha}{\lambda}\bar\partial_{t}\gamma(t_{n}), \zeta_{h}^{n}\big) 
			- \bar{b}(\bar\partial_{t}\tilde{\boldsymbol{\rho}}(t_{n}), \zeta_{h}^{n})
			\\
			&\ -\big((c_{0}+\frac{\alpha^{2}}{\lambda})\bar\partial_{t}\gamma(t_{n}),\eta_{h}^{n}\big) 
			+ \big((c_{0}+\frac{\alpha^{2}}{\lambda})e_{3}(t_{n}),\eta_{h}^{n}\big) 
			+ \big(\frac{\alpha}{\lambda}\bar\partial_{t}\pi(t_{n}),\eta_{h}^{n}\big)  
			\\
			&\ - \big(\frac{\alpha}{\lambda}e_{5}(t_{n}),\eta_{h}^{n}\big) 
			+ (L\Xi(t_{n}),\nabla \eta_{h}^{n}).
		\end{aligned}
	\end{equation}
	In a similar way as proving \eqref{e7.3.8}, we conclude that
	\begin{equation}\label{e7.4.22}
		\begin{aligned}
			\tau\bar{a}(\tilde{\boldsymbol{\xi}}_{h}^{n},\bar\partial_{t}\tilde{\boldsymbol{\xi}}_{h}^{n})
			\geq&\ \frac{1}{2}\|\tilde{\boldsymbol{\xi}}_{h}^{n}\|_{\bar{a}}^{2} - \frac{1}{2}\|\tilde{\boldsymbol{\xi}}_{h}^{n-1}\|_{\bar{a}}^{2},
			\\
			\tau(\bar\partial_{t}(\alpha\eta_{h}^{n}-\zeta_{h}^{n}), \alpha\eta_{h}^{n}-\zeta_{h}^{n}) 
			\geq&\ \frac{1}{2}\|\alpha\eta_{h}^{n}-\zeta_{h}^{n}\|^{2} - \frac{1}{2}\|\alpha\eta_{h}^{n-1}-\zeta_{h}^{n-1}\|^{2}.
		\end{aligned}
	\end{equation}
	From \eqref{e7.3.18}, \eqref{e7.4.21}, \eqref{e7.4.22}, the Cauchy--Schwarz and Young inequalities, it follows that
	\begin{equation}\label{e7.4.23}
		\begin{aligned}
			&\ \frac{\epsilon}{2}\big(\|\Pi_{h}^{n}\|^{2}-\|\Pi_{h}^{n-1}\|^{2}\big) 
			+ \frac{\mu}{2}\big(\|\Theta_{h}^{n}\|^{2}-\|\Theta_{h}^{n-1}\|^{2}\big) 
			\\
			&\ + \frac{1}{2}\big(\|\tilde{\boldsymbol{\xi}}_{h}^{n}\|_{\bar{a}}^{2}-\|\tilde{\boldsymbol{\xi}}_{h}^{n-1}\|_{\bar{a}}^{2}\big)
			+ \frac{c_{0}}{2}\big(\|\eta_{h}^{n}\|^{2}-\|\eta_{h}^{n-1}\|^{2}\big) 
			\\
			&\ + \frac{1}{2\lambda}\big(\|\alpha\eta_{h}^{n}-\zeta_{h}^{n}\|^{2}-\|\alpha\eta_{h}^{n-1}-\zeta_{h}^{n-1}\|^{2}\big) 
			+ \kappa\tau\|\nabla \eta_{h}^{n}\|^{2}
			\\
			\leq&\ \frac{\epsilon}{2}\tau\|\bar\partial_{t}\Xi(t_{n})\|^{2} 
			+ \frac{\epsilon}{2}\tau\|\Pi_{h}^{n}\|^{2} 
			+ \frac{\epsilon}{2}\tau\|\bm{e}_{1}(t_{n})\|^{2} 
			+ \frac{\epsilon}{2}\tau\|\Pi_{h}^{n}\|^{2} 
			\\
			&\ + \frac{\sigma}{2}\tau\|\Xi(t_{n})\|^{2} 
			+ \frac{\sigma}{2}\tau\|\Pi_{h}^{n}\|^{2} 
			+ \sigma\tau\|\Pi_{h}^{n}\|^{2} 
			+ \frac{L}{2}\tau\|\nabla \gamma(t_{n})\|^{2} 
			\\
			&\ + \frac{L}{2}\tau\|\Pi_{h}^{n}\|^{2} 
			+ \frac{\kappa}{2}\tau\|\nabla \eta_{h}^{n}\|^{2} 
			+ \frac{2L^{2}}{\kappa}\tau\|\Pi_{h}^{n}\|^{2} 
			+ \frac{\mu}{2}\tau\|\bar\partial_{t}\Lambda(t_{n})\|^{2} 
			\\
			&\ + \frac{\mu}{2}\tau\|\Theta_{h}^{n}\|^{2} 
			+ \frac{\mu}{2}\tau\|\bm{e}_{2}(t_{n})\|^{2} 
			+ \frac{\mu}{2}\tau\|\Theta_{h}^{n}\|^{2} 
			+ \frac{1}{2}\tau\|\nabla\times \Xi(t_{n})\|^{2} 
			\\
			&\ + \frac{1}{2}\tau\|\Theta_{h}^{n}\|^{2} 
			+ \frac{\alpha^{2}}{2\lambda}\tau\|\bar\partial_{t}\gamma(t_{n})\|^{2} 
			+ \frac{1}{2\lambda}\tau\|\zeta_{h}^{n}\|^{2} 
			+ \frac{1}{2}\tau\|\bar\partial_{t}\tilde{\boldsymbol{\rho}}(t_{n})\|_{1}^{2} 
			\\
			&\ + \frac{1}{2}\tau\|\zeta_{h}^{n}\|^{2} 
			+ (\frac{c_{0}}{2}+\frac{\alpha^{2}}{2\lambda})\tau\|\bar\partial_{t}\gamma(t_{n})\|^{2} 
			+ (\frac{c_{0}}{2}+\frac{\alpha^{2}}{2\lambda})\tau\|\eta_{h}^{n}\|^{2} 
			\\
			&\ + (\frac{c_{0}}{2}+\frac{\alpha^{2}}{2\lambda})\tau\|e_{3}(t_{n})\|^{2} 
			+ (\frac{c_{0}}{2}+\frac{\alpha^{2}}{2\lambda})\tau\|\eta_{h}^{n}\|^{2} 
			+ \frac{1}{2\lambda}\tau\|\bar\partial_{t}\pi(t_{n})\|^{2} 
			\\
			&\ + \frac{\alpha^{2}}{2\lambda}\tau\|\eta_{h}^{n}\|^{2} 
			+ \frac{1}{2\lambda}\tau\|e_{5}(t_{n})\|^{2} 
			+ \frac{\alpha^{2}}{2\lambda}\tau\|\eta_{h}^{n}\|^{2} 
			\\
			&\ + \frac{L^{2}}{2\kappa}\tau\|\Xi(t_{n})\|^{2} 
			+ \frac{\kappa}{2}\tau\|\nabla \eta_{h}^{n}\|^{2}.
		\end{aligned}
	\end{equation}
	By virtue of \eqref{e7.4.9} and \eqref{e7.4.18}, we have
	\begin{equation}\label{e7.4.24}
		\begin{aligned}
			\delta_{0}\|\zeta_{h}^{n}\| \leq&\ \sup_{\mathbf{0}\neq \mathbf{v}_{h}\in \mathbb{U}_{h}}\frac{\bar{b}(\mathbf{v}_{h},\zeta_{h}^{n})}{\|\mathbf{v}_{h}\|_{1}} 
			= \sup_{\mathbf{0}\neq \mathbf{v}_{h}\in \mathbb{U}_{h}}\frac{\bar{a}(\tilde{\boldsymbol{\xi}}_{h}^{n}, \mathbf{v}_{h})}{\|\mathbf{v}_{h}\|_{1}} 
			\\
			\leq&\ \sup_{\mathbf{0}\neq \mathbf{v}_{h}\in \mathbb{U}_{h}}\frac{C\|\tilde{\boldsymbol{\xi}}_{h}^{n}\|_{1}\|\mathbf{v}_{h}\|_{1}}{\|\mathbf{v}_{h}\|_{1}}
			= C\|\tilde{\boldsymbol{\xi}}_{h}^{n}\|_{1},
		\end{aligned}
	\end{equation}
	where $C>0$ is a constant independent of $\tau$, $h$ and $\lambda$. 
	Inductively for $n\geq 1$ in \eqref{e7.4.23}, and then utilizing \eqref{e7.4.24}, we conclude that for $1\leq n\leq N$,
	\begin{equation}\label{e7.4.25}
		\begin{aligned}
			&\ \epsilon\|\Pi_{h}^{n}\|^{2} 
			+ \mu\|\Theta_{h}^{n}\|^{2} 
			+ c_{1}^{2}\|\tilde{\boldsymbol{\xi}}_{h}^{n}\|_{1}^{2}
			+ c_{0}\|\eta_{h}^{n}\|^{2} 
			\\
			\leq&\ \epsilon\|\Pi_{h}^{n}\|^{2} 
			+ \mu\|\Theta_{h}^{n}\|^{2} 
			+ \|\tilde{\boldsymbol{\xi}}_{h}^{n}\|_{\bar{a}}^{2}
			+ c_{0}\|\eta_{h}^{n}\|^{2} 
			+ \frac{1}{\lambda}\|\alpha\eta_{h}^{n}-\zeta_{h}^{n}\|^{2} 
			\\
			\leq&\ \epsilon\|\Pi_{h}^{0}\|^{2} 
			+ \mu\|\Theta_{h}^{0}\|^{2} 
			+ \|\tilde{\boldsymbol{\xi}}_{h}^{0}\|_{\bar{a}}^{2}
			+ c_{0}\|\eta_{h}^{0}\|^{2} 
			+ \frac{1}{\lambda}\|\alpha\eta_{h}^{0}-\zeta_{h}^{0}\|^{2} 
			\\
			&\ +\epsilon\sum_{i=1}^{n}\tau\|\bar\partial_{t}\Xi(t_{i})\|^{2} 
			+ \epsilon\sum_{i=1}^{n}\tau\|\bm{e}_{1}(t_{i})\|^{2} 
			+ (\sigma+\frac{L^{2}}{\kappa})\sum_{i=1}^{n}\tau\|\Xi(t_{i})\|^{2} 
			\\
			&\ + L\sum_{i=1}^{n}\tau\|\nabla \gamma(t_{i})\|^{2} 
			+ \mu\sum_{i=1}^{n}\tau\|\bar\partial_{t}\Lambda(t_{i})\|^{2} 
			+ \mu\sum_{i=1}^{n}\tau\|\bm{e}_{2}(t_{i})\|^{2} 
			\\
			&\ + \sum_{i=1}^{n}\tau\|\nabla\times \Xi(t_{i})\|^{2} 
			+ (c_{0}+\frac{2\alpha^{2}}{\lambda})\sum_{i=1}^{n}\tau\|\bar\partial_{t}\gamma(t_{i})\|^{2} 
			+ \sum_{i=1}^{n}\tau\|\bar\partial_{t}\tilde{\boldsymbol{\rho}}(t_{i})\|_{1}^{2} 
			\\
			&\ + (c_{0}+\frac{\alpha^{2}}{\lambda})\sum_{i=1}^{n}\tau\|e_{3}(t_{i})\|^{2} 
			+ \frac{1}{\lambda}\sum_{i=1}^{n}\tau\|\bar\partial_{t}\pi(t_{i})\|^{2} 
			+ \frac{1}{\lambda}\sum_{i=1}^{n}\tau\|e_{5}(t_{i})\|^{2} 
			\\
			&\ + (2\epsilon+3\sigma+L+\frac{4L^{2}}{\kappa})\sum_{i=1}^{n}\tau\|\Pi_{h}^{i}\|^{2} 
			+ (2\mu+1)\sum_{i=1}^{n}\tau\|\Theta_{h}^{i}\|^{2} 
			\\
			&\ + (\frac{1}{\lambda}+1)C\sum_{i=1}^{n}\tau\|\tilde{\boldsymbol{\xi}}_{h}^{i}\|_{1}^{2} 
			+ (2c_{0}+\frac{4\alpha^{2}}{\lambda})\sum_{i=1}^{n}\tau\|\eta_{h}^{i}\|^{2}.
		\end{aligned}
	\end{equation}
	Using \eqref{e7.4.11} and Lemmas \ref{l7.3.1} and \ref{l7.4.3} yields
	\begin{equation}\label{e7.4.26}
		\begin{aligned}
			\|\tilde{\boldsymbol{\xi}}_{h}^{0}\|_{\bar{a}} 
			\leq&\ c_{2}\|\mathscr{Q}_{h}\mathbf{u}_{0}-\mathbf{u}_{0}\|_{1} 
			+ c_{2}\|\mathbf{u}_{0}-\mathscr{P}_{h}\mathbf{u}_{0}\|_{1}
			\leq Ch^{\min\{l,\beta\}}\|\mathbf{u}_{0}\|_{\beta+1},
			\\
			\|\zeta_{h}^{0}\| =&\ 0.
		\end{aligned}
	\end{equation}
	In a similar way as proving \eqref{e7.3.21}, utilizing Lemmas \ref{l7.4.2} and \ref{l7.4.3}, we have
	\begin{equation}\label{e7.4.27}
		\begin{aligned}
			\|\bar\partial_{t}\tilde{\boldsymbol{\rho}}(t_{i})\|_{1}^{2} \leq&\ C\frac{1}{\tau} h^{2\min\{l,\beta\}} \int_{t_{i-1}}^{t_{i}}{\|\frac{\partial}{\partial t} \mathbf{u}(\theta)\|_{\beta+1}^{2}}{\, \mathrm{d}\theta},
			\\
			\|\bar\partial_{t}\pi(t_{i})\|^{2} \leq&\ C\frac{1}{\tau} h^{2\min\{l,\beta\}} \int_{t_{i-1}}^{t_{i}}{\|\frac{\partial}{\partial t} \varphi(\theta)\|_{\beta}^{2}}{\, \mathrm{d}\theta}.
		\end{aligned}
	\end{equation}
	In a similar way as deriving \eqref{e7.3.23}, we get
	\begin{equation}\label{e7.4.28}
		\|e_{5}(t_{i})\|^{2}\leq \tau\int_{t_{i-1}}^{t_{i}}{\|\frac{\partial^{2}}{\partial t^{2}} \varphi(\theta)\|^{2}}{\, \mathrm{d}\theta}.
	\end{equation}
	According to \eqref{e7.4.25}--\eqref{e7.4.28}, \eqref{e7.3.20}--\eqref{e7.3.24}, Lemma \ref{l7.3.2} and (H2)--(H4), we obtain
	\begin{align*}
		&\ \epsilon\|\Pi_{h}^{n}\|^{2} 
		+ \mu\|\Theta_{h}^{n}\|^{2} 
		+ c_{1}^{2}\|\tilde{\boldsymbol{\xi}}_{h}^{n}\|_{1}^{2}
		+ c_{0}\|\eta_{h}^{n}\|^{2} 
		\\
		\leq&\ \epsilon Ch^{2\min\{l,\beta\}}\|\mathbf{E}_{0}\|_{\beta+1}^{2} 
		+ Ch^{2\min\{l,\beta\}}\|\mathbf{u}_{0}\|_{\beta+1}^{2}
		+ c_{0}Ch^{2(\min\{l,\beta\}+1)}\|p_{0}\|_{\beta+1}^{2} 
		\\
		&\ + \frac{2\alpha^{2}}{\lambda}Ch^{2(\min\{l,\beta\}+1)}\|p_{0}\|_{\beta+1}^{2} 
		+\epsilon Ch^{2\min\{l,\beta\}} \int_{0}^{t_{n}}{\|\frac{\partial}{\partial t} \mathbf{E}(\theta)\|_{\beta+1}^{2}}{\, \mathrm{d}\theta} 
		\\
		&\ + \epsilon \tau^{2}\int_{0}^{t_{n}}{\|\frac{\partial^{2}}{\partial t^{2}} \mathbf{E}(\theta)\|^{2}}{\, \mathrm{d}\theta} 
		+ (\sigma+\frac{L^{2}}{\kappa})Ch^{2\min\{l,\beta\}}\sum_{i=1}^{n}\tau\|\mathbf{E}(t_{i})\|_{\beta+1}^{2} 
		\\
		&\ + LCh^{2\min\{l,\beta\}}\sum_{i=1}^{n}\tau\|p(t_{i})\|_{\beta+1}^{2} 
		+ \mu Ch^{2\min\{l,\beta\}} \int_{0}^{t_{n}}{\|\frac{\partial}{\partial t} \mathbf{H}(\theta)\|_{\beta}^{2}}{\, \mathrm{d}\theta} 
		\\
		&\ + \mu \tau^{2}\int_{0}^{t_{n}}{\|\frac{\partial^{2}}{\partial t^{2}} \mathbf{H}(\theta)\|^{2}}{\, \mathrm{d}\theta} 
		+ Ch^{2\min\{l,\beta\}}\sum_{i=1}^{n}\tau\|\mathbf{E}(t_{i})\|_{\beta+1}^{2} 
		\\
		&\ + (c_{0}+\frac{2\alpha^{2}}{\lambda})Ch^{2(\min\{l,\beta\}+1)} \int_{0}^{t_{n}}{\|\frac{\partial}{\partial t} p(\theta)\|_{\beta+1}^{2}}{\, \mathrm{d}\theta} 
		\\
		&\ + Ch^{2\min\{l,\beta\}} \int_{0}^{t_{n}}{\|\frac{\partial}{\partial t} \mathbf{u}(\theta)\|_{\beta+1}^{2}}{\, \mathrm{d}\theta} 
		+ (c_{0}+\frac{\alpha^{2}}{\lambda})\tau^{2}\int_{0}^{t_{n}}{\|\frac{\partial^{2}}{\partial t^{2}} p(\theta)\|^{2}}{\, \mathrm{d}\theta} 
		\\
		&\ + \frac{1}{\lambda}Ch^{2\min\{l,\beta\}} \int_{0}^{t_{n}}{\|\frac{\partial}{\partial t} \varphi(\theta)\|_{\beta}^{2}}{\, \mathrm{d}\theta} 
		+ \frac{1}{\lambda}\tau^{2}\int_{0}^{t_{n}}{\|\frac{\partial^{2}}{\partial t^{2}} \varphi(\theta)\|^{2}}{\, \mathrm{d}\theta} 
		\\
		&\ + (2\epsilon+3\sigma+L+\frac{4L^{2}}{\kappa})\sum_{i=1}^{n}\tau\|\Pi_{h}^{i}\|^{2} 
		+ (2\mu+1)\sum_{i=1}^{n}\tau\|\Theta_{h}^{i}\|^{2} 
		\\
		&\ + (\frac{1}{\lambda}+1)C\sum_{i=1}^{n}\tau\|\tilde{\boldsymbol{\xi}}_{h}^{i}\|_{1}^{2} 
		+ (2c_{0}+\frac{4\alpha^{2}}{\lambda})\sum_{i=1}^{n}\tau\|\eta_{h}^{i}\|^{2}
		\\
		\leq&\ C(\tau^{2}+h^{2\min\{l,\beta\}}) 
		+ C\sum_{i=1}^{n}\tau(\|\Pi_{h}^{i}\|^{2}+\|\Theta_{h}^{i}\|^{2}+\|\tilde{\boldsymbol{\xi}}_{h}^{i}\|_{1}^{2}+\|\eta_{h}^{i}\|^{2}).
	\end{align*}
	Then, applying the discrete Gr\"{o}nwall lemma yields
	\begin{align*}
		\|\Pi_{h}^{n}\|^{2} 
		+ \|\Theta_{h}^{n}\|^{2} 
		+ \|\tilde{\boldsymbol{\xi}}_{h}^{n}\|_{1}^{2}
		+ \|\eta_{h}^{n}\|^{2} 
		\leq&\ C (\tau^{2} + h^{2\min\{l,\beta\}}) e^{C\sum_{i=1}^{n}\tau} 
		\\
		\leq&\ C (\tau^{2} + h^{2\min\{l,\beta\}}),
	\end{align*}
	which together with \eqref{e7.4.24} leads to
	\begin{equation*}
		\|\Pi_{h}^{n}\|^{2} 
		+ \|\Theta_{h}^{n}\|^{2} 
		+ \|\tilde{\boldsymbol{\xi}}_{h}^{n}\|_{1}^{2} 
		+ \|\zeta_{h}^{n}\|^{2} 
		+ \|\eta_{h}^{n}\|^{2} 
		\leq C (\tau^{2} + h^{2\min\{l,\beta\}}),
	\end{equation*}
	where $C>0$ is a constant independent of $\tau$, $h$ and $\lambda$. This together with \eqref{e7.3.12} and \eqref{e7.4.17} completes the proof.
\end{proof}

\section{Numerical experiments}\label{s7.5}
Fix $T=0.1$, $\mathscr{D}=[0, 1]^{3}$, $\epsilon=1$, $\sigma=2$, $L=1$, $\mu=1$, $G=1$, $\alpha=1$, $c_{0}=1$ and $\kappa=2$. We take uniform tetrahedral partition for $\mathscr{D}$ to execute numerical experiments with $l=2$ on the FEniCS computing platform \cite{AlBlHaJoKeLoRiRiRoWe2015,LoMaWe2012}. 

\begin{example}\label{eg7.5.1}
	The initial value functions $\mathbf{E}_{0}$, $\mathbf{H}_{0}$, $\mathbf{u}_{0}$, $p_{0}$ and right-hand side functions $\mathbf{j}$, $\mathbf{f}$, $g$ are chosen such that the exact solution to \eqref{e7.2.1}--\eqref{e7.2.3} reads
	\begin{align*}
		\mathbf{E} =
		\left(
		\begin{array}{c}
			\sin(\pi t)\sin(\pi x)\sin(\pi y)\sin(\pi z)
			\\
			\sin(\pi t)\sin(\pi x)\sin(\pi y)\sin(\pi z)
			\\
			\sin(\pi t)\sin(\pi x)\sin(\pi y)\sin(\pi z)
		\end{array}
		\right),
	\end{align*}
	\begin{align*}
		\mathbf{H} =
		\left(
		\begin{array}{c}
			\cos(\pi t)(\sin(\pi x)\cos(\pi y)\sin(\pi z)-\sin(\pi x)\sin(\pi y)\cos(\pi z))
			\\
			\cos(\pi t)(\sin(\pi x)\sin(\pi y)\cos(\pi z)-\cos(\pi x)\sin(\pi y)\sin(\pi z))
			\\
			\cos(\pi t)(\cos(\pi x)\sin(\pi y)\sin(\pi z)-\sin(\pi x)\cos(\pi y)\sin(\pi z))
		\end{array}
		\right),
	\end{align*}
	\begin{align*}
		\mathbf{u} =
		\left(
		\begin{array}{c}
			200e^{-t}(x-x^{2})^{2}(2y^{3}-3y^{2}+y)(2z^{3}-3z^{2}+z)+\frac{1}{\lambda}e^{-t}\sin(\pi x)
			\\
			-100e^{-t}(y-y^{2})^{2}(2x^{3}-3x^{2}+x)(2z^{3}-3z^{2}+z)+\frac{1}{\lambda}e^{-t}\sin(\pi y)
			\\
			-100e^{-t}(z-z^{2})^{2}(2y^{3}-3y^{2}+y)(2x^{3}-3x^{2}+x)+\frac{1}{\lambda}e^{-t}\sin(\pi z)
		\end{array}
		\right),
	\end{align*}
	\begin{equation*}
		p = e^{-t}\sin(\pi x)\sin(\pi y)\sin(\pi z).
	\end{equation*}
	Here, $\nabla\cdot\mathbf{u} = \frac{1}{\lambda}\pi e^{-t}(\cos(\pi x)+\cos(\pi y)+\cos(\pi z)) \to 0$ as $\lambda\to\infty$.
\end{example}

We take $\tau=1/1800$ and $h= \frac{1}{3},\frac{1}{5},\frac{1}{7},\frac{1}{9}$ to conduct numerical experiments using the conforming FEM \eqref{e7.3.6} for $\lambda=1,10^{4},10^{8}$, respectively, to examine the Poisson locking phenomenon, see Table \ref{tab7.5.1} for the corresponding errors and convergence orders. We observe that the convergence order of $\mathbf{u}$ reduces by $1$, which indicates the locking phenomenon does occur as $\lambda$ increases. Then, we carry out numerical computations with locking-free FEM \eqref{e7.4.12} and list the corresponding errors and convergence orders in Table \ref{tab7.5.2}, which shows the locking-free property.

\begin{table}[htbp]
	\belowrulesep=0pt
	\aboverulesep=0pt
	\renewcommand{\arraystretch}{1.4}
	\setlength{\arrayrulewidth}{0.1mm}
	\centering
	\caption{Errors and convergence orders of conforming FEM \eqref{e7.3.6}.}
	\label{tab7.5.1}
	\begin{tabular}{@{}c|cc|cc|cc@{}}
		\toprule
		$\lambda$ & \multicolumn{2}{c|}{$\lambda=1$} & \multicolumn{2}{c|}{$\lambda=10^{4}$} & \multicolumn{2}{c}{$\lambda=10^{8}$}
		\\
		\midrule[0.7pt]
		$h$ & $\big\|\mathbf{E}(T)-\mathbf{E}_{h}^{N}\big\|$ & Order & $\big\|\mathbf{E}(T)-\mathbf{E}_{h}^{N}\big\|$ & Order & $\big\|\mathbf{E}(T)-\mathbf{E}_{h}^{N}\big\|$ & Order
		\\
		\midrule[0.2pt]
		1/3 & 8.5384E-02 & - & 8.5382E-02 & -  & 8.5382E-02 & -
		\\
		1/5 & 5.1133E-02 & 1.0037 & 5.1133E-02 & 1.0037 & 5.1133E-02 & 1.0037
		\\
		1/7 & 3.6551E-02 & 0.9977 & 3.6551E-02 & 0.9978 & 3.6551E-02 & 0.9978
		\\
		1/9 & 2.8477E-02 & 0.9933 & 2.8477E-02 & 0.9933 & 2.8477E-02 & 0.9933
		\\
		\midrule[0.7pt]
		$h$ & $\big\|\mathbf{H}(T)-\mathbf{H}_{h}^{N}\big\|$ & Order & $\big\|\mathbf{H}(T)-\mathbf{H}_{h}^{N}\big\|$ & Order & $\big\|\mathbf{H}(T)-\mathbf{H}_{h}^{N}\big\|$ & Order
		\\
		\midrule[0.2pt]
		1/3 & 8.5011E-02 & - & 8.5011E-02 & -  & 8.5011E-02 & -
		\\
		1/5 & 3.0650E-02 & 1.9970 & 3.0650E-02 & 1.9970 & 3.0650E-02 & 1.9970
		\\
		1/7 & 1.5572E-02 & 2.0127 & 1.5572E-02 & 2.0127 & 1.5572E-02 & 2.0127
		\\
		1/9 & 9.4039E-03 & 2.0067 & 9.4039E-03 & 2.0067 & 9.4039E-03 & 2.0067
		\\
		\midrule[0.7pt]
		$h$ & $\big\|\mathbf{u}(T)-\mathbf{u}_{h}^{N}\big\|_{1}$ & Order & $\big\|\mathbf{u}(T)-\mathbf{u}_{h}^{N}\big\|_{1}$ & Order & $\big\|\mathbf{u}(T)-\mathbf{u}_{h}^{N}\big\|_{1}$ & Order
		\\
		\midrule[0.2pt]
		1/3 & 2.2743E-01 & - & 3.6909E-01 & -  & 3.7058E-01 & -
		\\
		1/5 & 9.2242E-02 & 1.7666 & 2.4277E-01 & 0.8201 & 2.4526E-01 & 0.8080
		\\
		1/7 & 4.9157E-02 & 1.8706 & 1.7118E-01 & 1.0385 & 1.7455E-01 & 1.0108
		\\
		1/9 & 3.0367E-02 & 1.9165 & 1.3027E-01 & 1.0867 & 1.3443E-01 & 1.0393
		\\
		\midrule[0.7pt]
		$h$ & $\big\|p(T)-p_{h}^{N}\big\|$ & Order & $\big\|p(T)-p_{h}^{N}\big\|$ & Order & $\big\|p(T)-p_{h}^{N}\big\|$ & Order
		\\
		\midrule[0.2pt]
		1/3 & 1.4784E-02 & - & 1.4790E-02 & -  & 1.4790E-02 & -
		\\
		1/5 & 2.9408E-03 & 3.1613 & 2.9417E-03 & 3.1615 & 2.9417E-03 & 3.1615
		\\
		1/7 & 1.0205E-03 & 3.1457 & 1.0206E-03 & 3.1460 & 1.0206E-03 & 3.1460
		\\
		1/9 & 4.6761E-04 & 3.1052 & 4.6763E-04 & 3.1057 & 4.6763E-04 & 3.1057
		\\
		\bottomrule
	\end{tabular}
\end{table}

\begin{table}[htbp]
	\belowrulesep=0pt
	\aboverulesep=0pt
	\renewcommand{\arraystretch}{1.4}
	\setlength{\arrayrulewidth}{0.1mm}
	\centering
	\caption{Errors and convergence orders of locking-free FEM \eqref{e7.4.12}.}
	\label{tab7.5.2}
	\begin{tabular}{@{}c|cc|cc|cc@{}}
		\toprule
		$\lambda$ & \multicolumn{2}{c|}{$\lambda=1$} & \multicolumn{2}{c|}{$\lambda=10^{4}$} & \multicolumn{2}{c}{$\lambda=10^{8}$}
		\\
		\midrule[0.7pt]
		$h$ & $\big\|\mathbf{E}(T)-\mathbf{E}_{h}^{N}\big\|$ & Order & $\big\|\mathbf{E}(T)-\mathbf{E}_{h}^{N}\big\|$ & Order & $\big\|\mathbf{E}(T)-\mathbf{E}_{h}^{N}\big\|$ & Order
		\\
		\midrule[0.2pt]
		1/3 & 8.5379E-02 & - & 8.5381E-02 & -  & 8.5381E-02 & -
		\\
		1/5 & 5.1133E-02 & 1.0036 & 5.1133E-02 & 1.0037 & 5.1133E-02 & 1.0037
		\\
		1/7 & 3.6551E-02 & 0.9977 & 3.6551E-02 & 0.9977 & 3.6551E-02 & 0.9977
		\\
		1/9 & 2.8477E-02 & 0.9933 & 2.8477E-02 & 0.9933 & 2.8477E-02 & 0.9933
		\\
		\midrule[0.7pt]
		$h$ & $\big\|\mathbf{H}(T)-\mathbf{H}_{h}^{N}\big\|$ & Order & $\big\|\mathbf{H}(T)-\mathbf{H}_{h}^{N}\big\|$ & Order & $\big\|\mathbf{H}(T)-\mathbf{H}_{h}^{N}\big\|$ & Order
		\\
		\midrule[0.2pt]
		1/3 & 8.5011E-02 & - & 8.5011E-02 & -  & 8.5011E-02 & -
		\\
		1/5 & 3.0650E-02 & 1.9970 & 3.0650E-02 & 1.9970 & 3.0650E-02 & 1.9970
		\\
		1/7 & 1.5572E-02 & 2.0127 & 1.5572E-02 & 2.0127 & 1.5572E-02 & 2.0127
		\\
		1/9 & 9.4039E-03 & 2.0067 & 9.4039E-03 & 2.0067 & 9.4039E-03 & 2.0067
		\\
		\midrule[0.7pt]
		$h$ & $\big\|\mathbf{u}(T)-\mathbf{u}_{h}^{N}\big\|_{1}$ & Order & $\big\|\mathbf{u}(T)-\mathbf{u}_{h}^{N}\big\|_{1}$ & Order & $\big\|\mathbf{u}(T)-\mathbf{u}_{h}^{N}\big\|_{1}$ & Order
		\\
		\midrule[0.2pt]
		1/3 & 2.2876E-01 & - & 1.8360E-01 & -  & 1.8361E-01 & -
		\\
		1/5 & 9.2215E-02 & 1.7786 & 7.7342E-02 & 1.6924 & 7.7343E-02 & 1.6925
		\\
		1/7 & 4.9083E-02 & 1.8742 & 4.1717E-02 & 1.8347 & 4.1717E-02 & 1.8347
		\\
		1/9 & 3.0316E-02 & 1.9172 & 2.5927E-02 & 1.8926 & 2.5927E-02 & 1.8926
		\\
		\midrule[0.7pt]
		$h$ & $\big\|\varphi(T)-\varphi_{h}^{N}\big\|$ & Order & $\big\|\varphi(T)-\varphi_{h}^{N}\big\|$ & Order & $\big\|\varphi(T)-\varphi_{h}^{N}\big\|$ & Order 
		\\
		\midrule[0.2pt]
		1/3 & 1.6763E-01 & - & 2.1965E-01 & - & 2.1983E-01 & -
		\\
		1/5 & 5.6581E-02 & 2.1261 & 6.4841E-02 & 2.3885 & 6.4861E-02 & 2.3895
		\\
		1/7 & 2.8187E-02 & 2.0709 & 3.0517E-02 & 2.2398 & 3.0522E-02 & 2.2403
		\\
		1/9 & 1.6872E-02 & 2.0421 & 1.7759E-02 & 2.1543 & 1.7760E-02 & 2.1546
		\\
		\midrule[0.7pt]
		$h$ & $\big\|p(T)-p_{h}^{N}\big\|$ & Order & $\big\|p(T)-p_{h}^{N}\big\|$ & Order & $\big\|p(T)-p_{h}^{N}\big\|$ & Order
		\\
		\midrule[0.2pt]
		1/3 & 1.4686E-02 & - & 1.4790E-02 & -  & 1.4790E-02 & -
		\\
		1/5 & 2.9289E-03 & 3.1562 & 2.9417E-03 & 3.1615 & 2.9417E-03 & 3.1615
		\\
		1/7 & 1.0181E-03 & 3.1404 & 1.0206E-03 & 3.1460 & 1.0206E-03 & 3.1460
		\\
		1/9 & 4.6693E-04 & 3.1018 & 4.6763E-04 & 3.1057 & 4.6763E-04 & 3.1057
		\\
		\bottomrule
	\end{tabular}
\end{table}

\section*{Acknowledgements}
This work is supported by National Natural Science Foundation of China (12571428).

\section*{Conflict of interest}
The authors declare that there is no conflict of interest.

\section*{Data Availability Statement}
No data set is used in the research.

	
	
	
	
	
%
%
%

\bibliographystyle{elsarticle-num-names}
\bibliography{High-order_Locking-free_Electroporoelasticity_references}

\end{document}